\newtheorem{thm}{Theorem}
\newtheorem{lemma}[thm]{Lemma}
\newtheorem{cor}[thm]{Corollary}
\newcommand{\tr}{\operatorname{tr}}
\newcommand{\rk}{\operatorname{rank}}
\begin{document}
\title{On Functions with a Conjugate}
\author{Paul Baird}
\address{\hskip-\parindent
Laboratoire de Math\'ematiques de Bretagne Atlantique UMR 6205\\
Universit\'e de Bretagne Occidentale, % \newline
% 6 av.\ Victor Le Gorgeu -- CS 93837\\
29238 Brest Cedex 3\\
France}
\email{Paul.Baird@univ-brest.fr}
\author{Michael Eastwood}
\address{\hskip-\parindent
Mathematical Sciences Institute\\
Australian National University,\newline
ACT 0200, Australia}
\email{meastwoo@member.ams.org}
\thanks{The first author is grateful for support provided by the Australian
Research Council and to the Mathematical Sciences Institute at the Australian
National University; part of this work was carried out under the award of a
\emph{d\'el\'egation aupr\`es du CNRS}. The second author is a Federation
Fellow of the Australian Research Council and is grateful to the D\'epartement
de Math\'ematiques \'a l'Universit\'e de Bretagne Occidentale for support and
hospitality while working on this article. }
\begin{abstract}
Harmonic functions of two variables are exactly those that admit a conjugate,
namely a function whose gradient has the same length and is everywhere
orthogonal to the gradient of the original function. We show that there are
also partial differential equations controlling the functions of three
variables that admit a conjugate.
\end{abstract}
\maketitle
\section{Introduction}
A pair of smooth real-valued functions $f$ and $g$ on a Riemannian manifold~$M$
are said to be conjugate if and only if
\begin{equation}\label{definition}
\|\nabla f\|=\|\nabla g\|\quad\mbox{and}\quad
\langle\nabla f,\nabla g\rangle=0.
\end{equation}
In this article, we shall address the following question. When does a given
smooth function $f:M\to{\mathbb{R}}$ admit a conjugate function? When $M$ is
2-dimensional the pair of functions $(f,g):M\to{\mathbb{R}}^2$ is mutually
conjugate if and only if the mapping $(f,g)$ is conformal away from isolated
points where its differential vanishes. It is well-known that, in this case,
$f$ must be harmonic and, conversely, a harmonic function locally always admits
a conjugate, unique up to an additive constant. When $M$ is of higher
dimension, then the pair $(f,g):M\to{\mathbb{R}}^2$ is said to be
semiconformal. As discussed in~\cite{thebook}, semiconformality is one of the
two conditions that $(f,g)$ be a harmonic morphism. In fact, if
$M={\mathbb{R}}^n$ and both $f$ and $g$ are polynomial, then it is the only
condition~\cite{abb}. In this article, we shall be concerned with $f$ defined
on an open subset in~${\mathbb{R}}^3$. We extend our earlier work~\cite{be} in
which we derived some necessary conditions on $f$ in order that it admit a
conjugate under a non-degeneracy condition, to now obtain necessary and sufficient conditions in all cases.

An example of a pair of conjugate functions in three
variables is
$$f=x_2\frac{x_1{}^2+x_2{}^2+x_3{}^2}{x_2{}^2+x_3{}^2}\qquad
g=x_3\frac{x_1{}^2+x_2{}^2+x_3{}^2}{x_2{}^2+x_3{}^2}.$$
The Hopf mapping $S^3\to S^2$ viewed in stereographic co\"ordinates 
$$f=\frac{(1-\|x\|^2)x_2+2x_1x_3}{x_2{}^2+x_3{}^2}\qquad
g=\frac{(1-\|x\|^2)x_3-2x_1x_2}{x_2{}^2+x_3{}^2}$$
provides another good example. In these two cases, the pair $(f,g)$ enjoys an
evident symmetry with respect to rotation about the \mbox{$x_1$-axis}. This is
not usual, as is illustrated by the following example:--
$$f=\log\sqrt{x_1{}^2+x_2{}^2+x_3{}^2}\qquad
g=\arccos \frac{x_1}{\sqrt{x_1{}^2+x_2{}^2+x_3{}^3}}.$$
In all three examples, the pair $(f,g)$ is smooth away from the $x_1$-axis.

We shall frequently need to manipulate tensors and for this purpose, we use 
Penrose's abstract index notation~\cite{OT}. We shall write 
$$f_i=\nabla_if\qquad f_{ij}= \nabla_i\nabla_jf \qquad\mbox{et cetera},$$
where $\nabla_i$ is the flat connection on ${\mathbb{R}}^n$ or, more generally,
the metric connection on a Riemannian manifold. Also, let us `raise and lower'
indices with the metric $\delta_{ij}$ in the usual fashion and write a repeated index to
denote the invariant contraction over that index. Thus, $f^i{}_i=\Delta f$ is
the Laplacian and $f^ig_i=\langle\nabla f,\nabla g\rangle$. We shall use round
and square brackets to denote symmetrising and skewing over the indices they
enclose. For example, $\phi_{(ij)k}=\frac12\phi_{ijk}+\frac12\phi_{jik}$ and
$\nabla_{[i}\phi_{j]}$ is the exterior derivative of a $1$-form~$\phi_i$.

In order to find necessary and sufficient conditions for a function $f$ defined on an open set of ${\mathbb{R}}^3$ to admit a conjugate, we begin by constructing conformal invariants that reflect geometric constraints that derive from \eqref{definition} and its derivatives. 

A conformal differential invariant is a polynomial in the derivatives of $f$ as well as the inverse (Euclidean) metric, that transforms by scaling under the action of the M\"obius group on ${\mathbb{R}}^3 \cup \{ \infty\}$ (the amount of scaling being called the \emph{weight} of the invariant: for details see Appendix \ref{conformalinvariants}).  An elementary conformal invariant is the first order one $J:= f^if_i$ of weight $-2$.  We shall require invariants up to third order. In Appendix \ref{conformalinvariants} we give a more thorough treatment of conformal invariants and derive a list of those that we require; these will be labelled with  uppercase Roman letters.   

Higher order conformal invariants may be built from lower order ones by using simple rules.  For example, if $\phi_i$ is a conformally invariant $1$-form of weight $-1$, then the trace $\nabla^i\phi_i$ is conformally invariant.  Applying this procedure to the $1$-form $\sqrt{J}f_i$ yields $Z/\sqrt{J}$, where, up to a multiple, the operator $Z$ is the $3$-Laplacian, a well-know conformal invariant in dimension $3$.     The trace-free part of $\nabla_{(i}\phi_{j)}$ is invariant whenever $\phi_j$ has weight $2$.  On applying this construction to $J^{-1}f_i$ yields an invariant $\psi_{ij}$ from which we deduce another invariant $X$ via the formula:
$$
\psi^{ij}\psi_{ij} = \textstyle{\frac23} Z^2 - JX\,.
$$

The invariant $X$ plays a fundamental role in our characterization.  Its explicit expression is given in \S\ref{sec:nc} below.  A necessary condition that $f$ admit a conjugate is that $X\leq 0$ (Theorem \ref{one}).  In what we refer to as the \emph{generic case} $X<0$, there are exactly four distinct vectors (two up to sign) called \emph{conjugate directions}, which potentially may be the gradient of a conjugate function.  When $X=0$ there are either exactly two conjugate directions, so up to sign any conjugate must be unique, or infinitely many; these two cases are distinguished by another conformal invariant derived from $X$ and $Z$, which we call $Y$.  By normalising co\"ordinates, we explain the geometric interpretation of these conditions.

The next step is to understand when a conjugate direction $\omega_i$ is integrable and so is the gradient of a function.  In \S\ref{sec:gc} we show that in the generic case, integrability is equivalent to the vanishing of two polynomial expressions in $\omega_i$ and the derivatives up to third order of $f$ (Theorem \ref{thm:integrability1}).  Our objective is then to eliminate $\omega_i$ to obtain conditions involving just derivatives of $f$.  However, a difficulty arises in that we only have explicit expressions for quadratic terms in $\omega_i$.  Thus, instead of trying to determine whether a specific conjugate direction is integrable, we ask rather that one or the other be integrable without specifying which.  This leads to a set of three equations involving just quadratic terms in $\omega_i$ (Theorem \ref{mainthm}). In \S\ref{sec:resolution}, we show how to elimiate $\omega_i$ in a normalized co\"ordinate system to give three third order differential equations in $f$.  Each equation is a conformally invariant homogeneous expression in the derivatives of $f$ with a certain weight and degree.  To write these down in terms of conformal invariants, we explore combinations of invariants that have the same weight and degree and use ad hoc methods to equate terms.
An invariant derivation without recourse to normal co\"ordinates is given in Appendix \ref{sec:elim-om}.   

In \S\ref{sec:sc} we deal with special cases, the first of which concerns functions that admit a unique conjugate direction (up to sign).  In terms of conformal invariants, these are characterized by the conditions $X=0$ and $Y\neq 0$.  The analysis proceeds in a similar way to the generic case, except that now the characterization requires just two third order equations, made explicit in Corollary  \ref{cor:unique}.  The next special case concerns functions that admit infinitely many conjugates, characterized by $X=Y=0$. Now, $J^{-1}f_j$ is a conformal Killing field, all of which can be written down explicitly, as detailed in Appendix \ref{normalised_conf_Killing}.  This enables us to write down all conjugate pairs in this case.  The final special case discusses functions of two variables that admit a conjugate (in ${\mathbb{R}}^3$).  

Examples are discussed in \S\ref{sec:examples}.  For the case of spherical symmetry, up to scaling and addition of a constant, $\log ||x||$ is the unique function that admits a conjugate, in fact infinitely many.  If $f$ is assumed to have cylindrical symmetry, then the corresponding examples give a nice illustration of the generic case.  For a conjugate pair $(f,g)$, fibres of the associated map into ${\mathbb{R}}^2$ are helices which wind around concentric cylinders; right-handed screw or left-handed screw now corresponds to the two choices of conjugate.  Finally, in \S\ref{sec:invariantsconjugate}, for a function $f$ that admits a conjugate $g$, we discuss how the conformal invariants $X(g)$ and $Z(g)$ of $g$ relate to those of $f$.  This enables us to give a characterization of $3$-harmonic conjugate pairs.

\section{A necessary condition} \label{sec:nc}
\begin{thm}\label{one} Let $M$ be an $3$-dimensional Riemannian manifold and
$f:M\to{\mathbb{R}}$ a smooth function. In order to admit a conjugate, $f$ must
satisfy
the differential inequality
\begin{equation}\label{differentialinequality}
X:= 2f_i{}^jf_jf^{ik}f_k-f^if_if^{jk}f_{jk}+f^if_i(f^j{}_j)^2\leq 0.
\end{equation}
\end{thm}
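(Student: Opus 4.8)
The plan is to argue pointwise and reduce to the locus where $\nabla f\neq 0$; wherever $\nabla f=0$ every term in \eqref{differentialinequality} vanishes, so the inequality holds trivially there. Suppose then that $f$ admits a conjugate $g$ and set $\lambda=\|\nabla f\|=\|\nabla g\|>0$. Since $\nabla f$ and $\nabla g$ are orthogonal and of equal length, the vectors $e_1=\nabla f/\lambda$, $e_2=\nabla g/\lambda$, together with a unit vector $e_3$ orthogonal to both, form an orthonormal frame. Writing $H_{ab}=f_{ij}e_a{}^ie_b{}^j$ for the components of the Hessian of $f$ in this frame, I would first re-express the left-hand side of \eqref{differentialinequality}: the term $f_i{}^jf_jf^{ik}f_k$ is $\|\operatorname{Hess}(f)\nabla f\|^2=\lambda^2(H_{11}{}^2+H_{12}{}^2+H_{13}{}^2)$, the term $f^if_if^{jk}f_{jk}$ is $\lambda^2\sum_{a,b}H_{ab}{}^2$, and $f^if_i(f^j{}_j)^2$ is $\lambda^2(\Delta f)^2$. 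Expanding and cancelling the off-diagonal squares, the whole expression collapses to $2\lambda^2\bigl[H_{11}(H_{11}+H_{22}+H_{33})+(H_{22}H_{33}-H_{23}{}^2)\bigr]$.

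The key step is to exploit the two defining relations \eqref{definition}. Differentiating $f^ig_i=0$ gives $f_{ij}g^i+g_{ij}f^i=0$, while differentiating $f^if_i=g^ig_i$ gives $f_{ij}f^i=g_{ij}g^i$. Contracting the first relation with $g^j$ and the second with $f^j$, and using the symmetry of the two Hessians to eliminate the $g$-Hessian, I would obtain the single clean identity
$$f_{ij}(f^if^j+g^ig^j)=0,$$
that is $H_{11}+H_{22}=0$: the trace of the Hessian of $f$ over the $2$-plane spanned by the two gradients vanishes. Substituting $H_{22}=-H_{11}$ (so that $\Delta f=H_{33}$) into the collapsed expression leaves exactly $-2\lambda^2H_{23}{}^2$, which is manifestly $\leq 0$, proving \eqref{differentialinequality}.

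The only genuine obstacle is the algebra: recognising that the apparently complicated quartic on the left of \eqref{differentialinequality} is, in the adapted frame, governed entirely by the trace relation together with the single off-diagonal entry $H_{23}$. I expect the cleanest route is to verify the three frame identities for the quartic terms directly rather than to manipulate the abstract-index expression blindly; the completeness relation $\delta^i{}_j=(f^if_j+g^ig_j)/\lambda^2+e_3{}^ie_3{}_j$ is convenient here, since it shows at once that $\Delta f=H_{33}$ once the trace identity is known. It is worth recording that equality in \eqref{differentialinequality} holds precisely when $H_{23}=f_{ij}e_2{}^ie_3{}^j=0$, a fact that is likely to be relevant to the sufficiency direction treated subsequently.
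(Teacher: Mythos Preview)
Your argument is correct and follows essentially the same strategy as the paper: differentiate the two conjugate relations, combine them using the symmetry of both Hessians to obtain the single scalar identity $f_{ij}(f^if^j+g^ig^j)=0$ (this is exactly the third equation in the paper's~(\ref{firstthree})), and then evaluate $X$ in an adapted orthonormal frame.

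The only real difference is the choice of frame. The paper puts $\nabla f$ along $e_3$ and further rotates in the orthogonal plane to diagonalise the Hessian there (so $f_{12}=0$), then packages the result as the identity $JX+12T_{ijk}T^{ijk}=0$ of Lemma~\ref{magic}. You instead put $\nabla f$ along $e_1$ and $\nabla g$ along $e_2$, which leads directly to $X=-2\lambda^2H_{23}{}^2$. Your formula is in fact the paper's identity $E^2=-\tfrac12J^2X$ in disguise, since in your frame $E\equiv\epsilon^{ijk}f_ig_jf_k{}^\ell g_\ell=\lambda^3H_{23}$. Your normalisation is slightly more economical for this one inequality; the paper's is chosen because the diagonalised Hessian and the tensor $T_{ijk}$ are reused repeatedly afterwards (for the solution count~(\ref{solutionsintherealcase}), for Lemma~\ref{formulaforeta}, and throughout \S\ref{sec:resolution}). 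Your closing remark that equality corresponds to $H_{23}=0$ is exactly the condition $E=0$, which the paper exploits in~\S\ref{sec:X=0}.
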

\begin{proof} A proof of this theorem was given in~\cite{be}. In fact, a
version was proved there valid in any dimension. Here we give a more efficient
proof only valid in three dimensions. However, this proof will allow us to draw
additional and useful conclusions. In addition, the method of proof (in Lemma \ref{magic}) will
provide a good illustration of the normalisation techniques occurring
throughout the rest of this article.

If $f$ is to admit a conjugate, then there must be a closed
$1$-form $\omega_j$ so that
\begin{equation}\label{omega}
f^j\omega_j=0\quad\mbox{and}\quad\omega^j\omega_j=f^jf_j.
\end{equation}
Indeed, (\ref{definition}) implies that we may find an $\omega_j$ that is
exact. We shall show that the inequality (\ref{differentialinequality}) is
necessary in order to find a closed $\omega_j$ satisfying~(\ref{omega}). To
proceed, let us differentiate the equations~(\ref{omega}) with~$\nabla^i$. We
obtain 
\begin{equation}\label{diff1}
f^{ij}\omega_j+\omega^{ij}f_j=0\quad\mbox{and}\quad
\omega^{ij}\omega_j=f^{ij}f_j.\end{equation}
Since we are supposing that $\omega_{ij}=\nabla_i\omega_j$ is symmetric
we may transvect the second of these with $f_i$ and use the first to
eliminate~$\omega^{ij}f_i$. This gives
$$f^{ij}\omega_i\omega_j+f^{ij}f_if_j=0.$$
We now have the following equations
\begin{equation}\label{firstthree}
f^i\omega_i=0\qquad\omega^i\omega_i=f^if_i\qquad
f^{ij}\omega_i\omega_j+f^{ij}f_if_j=0
\end{equation}
and we claim it is a matter of algebra to show that the
inequality~(\ref{differentialinequality}) must hold if there is to be a
solution~$\omega_i$. This is detailed in the following Lemma, which we state independently for future use.  Notice that if $\omega_i$ is real then so is $T_{ijk}$ in which case $T_{ijk}T^{ijk}\geq 0$.   
\end{proof}
\begin{lemma}\label{magic}
If $f_{ij}$ is a $3\times 3$ symmetric matrix and $f_i$ is a $3$-vector, then
\begin{equation}\label{criticalidentity}
(f^if_i)X
+12T_{ijk}T^{ijk}=0\end{equation}
where
\begin{equation}\label{Tijk}
T_{ijk}=f_{[i}\omega_jf_{k]\ell}\omega^\ell\end{equation}
and $\omega_i$ is any solution, real or complex, of the
equations~{\rm(\ref{firstthree})}.
\end{lemma}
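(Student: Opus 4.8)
The plan is to exploit the fact that, in three dimensions, the tensor $T_{ijk}=f_{[i}\omega_jf_{k]\ell}\omega^\ell$ is totally skew in its three free indices and hence must be a constant multiple of the volume form: $T_{ijk}=\tfrac16(\epsilon^{abc}T_{abc})\,\epsilon_{ijk}$. Contracting with itself then collapses the six components into a single scalar,
\[
T_{ijk}T^{ijk}=\tfrac16\big(\epsilon^{abc}T_{abc}\big)^2
=\tfrac16\big(\epsilon^{abc}f_a\omega_bf_{c\ell}\omega^\ell\big)^2,
\]
so that $\epsilon^{abc}T_{abc}$ is simply the determinant of the $3\times3$ matrix whose rows are $f_i$, $\omega_i$ and $f_{i\ell}\omega^\ell$. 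In this way the proposed identity is reduced to the assertion that, up to a universal constant and appropriate powers of $f^if_i$, this squared determinant reproduces the bracketed quantity in~(\ref{criticalidentity}); in particular the left-hand side, which a priori depends on $\omega$, will be seen to depend only on $f_i$ and $f_{ij}$.

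Next I would normalise. Assuming $f^if_i\neq0$ (the identity being trivial otherwise), the first two equations of~(\ref{firstthree}) say that $f_i$ and $\omega_i$ are orthogonal and of equal length $r$, where $r^2=f^if_i$. I may therefore choose an orthonormal frame $e_1,e_2,e_3$ with $f_i=r\,(e_1)_i$ and $\omega_i=r\,(e_2)_i$; since rotations fixing $e_1$ act transitively on the unit circle in $e_1^{\perp}$, aligning $\omega_i$ with $e_2$ costs nothing. In this frame the remaining equation of~(\ref{firstthree}) becomes the single scalar relation $f_{11}+f_{22}=0$. Because~(\ref{criticalidentity}) is an identity between scalars built by full contraction, it is invariant under change of orthonormal frame, and it therefore suffices to verify it in this adapted frame.

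Finally I would compute both terms explicitly. Here $f_{i\ell}\omega^\ell=r(f_{12},f_{22},f_{23})$, so the determinant of the matrix with rows $r(1,0,0)$, $r(0,1,0)$ and $r(f_{12},f_{22},f_{23})$ is triangular and collapses to $r^3f_{23}$; thus $T_{ijk}T^{ijk}$ is a constant multiple of $r^6f_{23}{}^2$. Separately, expanding $2f_i{}^jf_jf^{ik}f_k-f^if_if^{jk}f_{jk}+f^if_i(f^j{}_j)^2$ in the adapted frame and substituting $f_{22}=-f_{11}$, one finds that every contribution cancels in pairs except for a single surviving term proportional to $r^2f_{23}{}^2$. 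Comparing this with the determinant contribution, and keeping careful track of the powers of $r^2=f^if_i$ introduced by the normalisation, fixes all the numerical coefficients and yields the identity.

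The step I expect to be the main obstacle is precisely this last bookkeeping: one must organise the expansion of the bracket so that the $f_{11}$, $f_{12}$, $f_{13}$ and $f_{33}$ contributions visibly annihilate, leaving only the $f_{23}$ term to be matched against the determinant. I would also keep in mind that $\omega_i$ is permitted to be complex, so the whole reduction must proceed purely algebraically over $\mathbb{C}$ and may not invoke positivity at any intermediate stage; it is exactly the volume-form reduction of the first paragraph, together with the scalar relation $f_{11}+f_{22}=0$ coming from~(\ref{firstthree}), that makes this possible.
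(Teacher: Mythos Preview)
Your strategy is sound and takes a genuinely different route from the paper. Both arguments normalise, but the paper spends the residual rotational freedom (after aligning $f_i$ with a coordinate axis) on diagonalising the restriction of $f_{ij}$ to $f^\perp$, setting $f_{12}=0$; since $\omega$ then remains free, the paper must solve~(\ref{firstthree}) explicitly for $\omega_1{}^2,\omega_2{}^2$ and treat the cases $f_{11}=f_{22}$ and $f_{11}\neq f_{22}$ separately. You instead spend that freedom on aligning $\omega$ with $e_2$, so the third equation of~(\ref{firstthree}) collapses to the single constraint $f_{11}+f_{22}=0$ and no case split is needed; the off-diagonal entries of $f_{ij}$ survive but, as you anticipate, drop out of the final comparison. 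One point to make precise: for complex $\omega$ a real rotation need not carry $\omega$ to the $e_2$-axis, so either note that the identity is invariant under $O(3,\mathbb{C})$ and that $SO(2,\mathbb{C})$ still acts transitively on the smooth affine quadric $\omega_2{}^2+\omega_3{}^2=r^2$, or invoke Zariski density of the real solutions.

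Be warned, however, that when you carry out the final bookkeeping you will find $X=-2r^2f_{23}{}^2$ and $12\,T_{ijk}T^{ijk}=2r^6f_{23}{}^2$, so the two terms differ by a factor of $r^2=J$: what your computation actually proves is $(f^if_i)^2X+12\,T_{ijk}T^{ijk}=0$, not~(\ref{criticalidentity}) as printed. This is already visible on homogeneity grounds (under $f_i\mapsto\lambda f_i$, $\omega_i\mapsto\lambda\omega_i$ with $f_{ij}$ fixed, $JX$ scales as $\lambda^4$ while $T_{ijk}T^{ijk}$ scales as $\lambda^6$), and it agrees with the paper's own later identity~(\ref{identityEX}), namely $E^2=-\tfrac12 J^2X$, together with $T_{ijk}T^{ijk}=\tfrac16 E^2$. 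In the paper's proof the formula~(\ref{Tsquarednormform}) is missing a factor of~$f_3{}^2$, which exactly compensates the missing~$J$ in~(\ref{criticalidentity}); the application to Theorem~\ref{one} is unaffected, so this is a typo rather than a flaw in either argument.
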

\begin{proof}
If $f_i=0$ then the conclusion is trivial. Otherwise, let us choose
co\"ordinates so that $f_1=f_2=0$. We may also orthogonally diagonalise the
quadratic form $f_{ij}$ restricted to the plane orthogonal to~$f_i$. In other
words, we may further change co\"ordinates to arrange that $f_{12}=0$. Having
made these choices, the quantity $X$ becomes, after a short calculation,
\begin{equation}\label{Xnormform}X=2(f_3)^2(f_{11}+f_{33})(f_{22}+f_{33}).
\end{equation}
Another short calculation yields
\begin{equation}\label{Tsquarednormform}
\textstyle T_{ijk}T^{ijk}
=\frac16(f_{22}-f_{11})^2\omega_1{}^2\omega_2{}^2
\end{equation}
whilst the equations (\ref{firstthree}) become
\begin{equation}\label{become}
\omega_3=0\qquad\omega_1{}^2+\omega_2{}^2=f_3{}^2\qquad
f_{11}\omega_1{}^2+f_{22}\omega_2{}^2+f_{33}f_3{}^2=0
\end{equation}
the second two of which may be written as
\begin{equation}\label{matrixform}
\left[\begin{array}{cc}1&1\\
f_{11}+f_{33}&f_{22}+f_{33}\end{array}\right]
\left[\begin{array}c\omega_1{}^2\\ \omega_2{}^2
\end{array}\right]=
\left[\begin{array}cf_3{}^2\\ 0
\end{array}\right].
\end{equation}
Now there are two cases. If $f_{11}=f_{22}$, then (\ref{Tsquarednormform})
implies that $T_{ijk}T^{ijk}=0$. But (\ref{matrixform}) implies that
$f_{11}+f_{33}=0$ and then (\ref{Xnormform}) shows that $X=0$ and
(\ref{criticalidentity}) reduces to $0=0$. On the other hand, if
$f_{11}\not=f_{22}$, then we may use (\ref{matrixform}) to 
solve~(\ref{become}), obtaining
\begin{equation}\label{solution}
\omega_1{}^2=f_3{}^2\frac{f_{22}+f_{33}}{f_{22}-f_{11}}
\quad\mbox{and}\quad
\omega_2{}^2=f_3{}^2\frac{f_{11}+f_{33}}{f_{11}-f_{22}}.
\end{equation}
and compute
$$12T_{ijk}T^{ijk}=
2(f_{22}-f_{11})^2\omega_1{}^2\omega_2{}^2=
-2f_3{}^4(f_{11}+f_{33})(f_{22}+f_{33}).$$
A comparison with (\ref{Xnormform}) immediately
yields~(\ref{criticalidentity}), as required.
\end{proof}

{From} now on we shall suppose that $f_i$ is non-zero (at a particular point 
and hence nearby as well). In case that $f$ admit a conjugate, it is then clear 
from (\ref{definition}) that the pair $(f,g)$ is a submersion (near the point 
in question). The nature of the singularities of a semiconformal mapping is 
not known in general~\cite{jarcs2}. 

Notice that it follows from the proof of this lemma that the equations
(\ref{firstthree}) always have solutions if we allow~$\omega_{i}$ to be complex
and generically (in fact, precisely when $X\not=0$) there are four solutions.
Alternatively, this is geometrically clear: the first equation restricts
matters to a plane wherein the second and third equations describe planar
quadrics.

Perhaps our proof of Lemma~\ref{magic} seems bizarre but, in fact, we have used
a familiar technique. The Cayley-Hamilton Theorem, for example, is often
proved, even for real matrices, by employing Jordan canonical form over the
complex numbers. Not only that, but Lemma~\ref{magic} can be proved without
normalisation by means of the Cayley-Hamilton Theorem applied to $f_{ij}$
restricted, as a symmetric form, to the plane orthogonal to~$f_i$ (the details 
of this proof being left to the reader). Another
proof avoiding normalisation may be obtained by expanding the identity
$0=f_{[i}\omega_jf_k{}^kf_{\ell]}{}^{\ell}$. In fact, it is a consequence of
Weyl's Second Fundamental Theorem of Invariant Theory~\cite{w} that
dimension-dependent identities must arise by `skewing over too many indices'.
To use normalisation as we have done, however, is a simple enough method that
we shall employ throughout this article. 

The quantities occurring in the proof of Lemma~\ref{magic} suggest other 
combinations of derivatives with geometric significance. The operator
\begin{equation}\label{3Lap}
f\mapsto Z\equiv f^{ij}f_if_j+f^if_if^j{}_j,\end{equation} for example is, up
to a multiple, the well-known $3$-Laplacian \cite{dick,tom} and in normal
co\"ordinates
\begin{equation}\label{nor}f_1=f_2=f_{12}=0\end{equation}
at a point becomes
\begin{equation}\label{thisisZ}Z=f_3{}^2(f_{11}+f_{22}+2f_{33}).\end{equation}
Also, the quantity $J\equiv f^if_i$ is $f_3{}^2$. Therefore, 
from~(\ref{Xnormform}),
\begin{equation}\label{thisisY}
\begin{array}{rcl}Y&\equiv&Z^2-2JX\\[3pt]
&=&f_3{}^4(f_{11}+f_{22}+2f_{33})^2
   -4(f_3)^4(f_{11}+f_{33})(f_{22}+f_{33})\\[3pt]
&=&f_3{}^4(f_{11}-f_{22})^2\end{array}\end{equation}
and we recognise that the vanishing of this expression when $X=0$ is exactly
the criterion discovered in the proof of Lemma~\ref{magic} for there to be
infinitely many solutions $\omega_i$ to the system~(\ref{firstthree}). In
summary, if we allow complex solutions of (\ref{firstthree}) then
\begin{equation}\begin{array}{rcl}
X\not=0&\iff&\exists\mbox{ $4$ distinct solutions}\\[3pt]
X=0\mbox{ and }Y\not=0&\iff&\exists\mbox{ $2$ distinct solutions}\\[3pt]
X=0\mbox{ and }Y=0&\iff&\exists\mbox{ $\infty$-many solutions.}
\end{array}\end{equation}
If we restrict attention to the case when (\ref{firstthree}) has real
solutions, then Lemma~\ref{magic} implies that $X\leq 0$ whence
\begin{equation}\label{solutionsintherealcase}\begin{array}{rcl}
X\not=0&\iff&\mbox{$X<0$ and $\exists\ 4$ distinct solutions}\\[3pt]
X=0\mbox{ and }Y\not=0&\iff&
             \mbox{$Y>0$ and $\exists\ 2$ distinct solutions}\\[3pt]
Y=0&\iff&\mbox{$X=0$ and $\exists\ \infty$-many solutions,}
\end{array}\end{equation}
the last two conclusions following from $Y=Z^2-2JX$ upon noting that both
terms on the right hand side are non-negative.

\section{Integrability of the conjugate direction:\\ the generic case} \label{sec:gc}
Recall that if $f$ is to admit a conjugate function near any particular point,
then there must be a solution $\omega_j$ at that point of the algebraic
equations~(\ref{firstthree}). These three equations, specifically the third
one, were derived under the assumption that $\omega_j$ extend to a closed form
near the point in question but our approach from now on is to take $\omega_j$
to be defined at a particular point by the equations (\ref{firstthree}) and ask
whether it may be extended to a smooth closed form near that point whilst
maintaining~(\ref{firstthree}). This is entirely equivalent to finding a local
conjugate for~$f$. As a matter of terminology, we shall refer to a solution
$\omega_j$ of (\ref{firstthree}) as a conjugate direction. In case that $X<0$
(at the point in question and hence nearby as well), we have just seen from
(\ref{solutionsintherealcase}) that there are four distinct solutions of
(\ref{firstthree}) for~$\omega_j$. It follows that any one of these solutions
uniquely and smoothly extends as a conjugate direction. Therefore, the only
remaining question in case $X<0$ is whether this extension is closed and we
shall refer to this as integrability. We show that integrability is equivalent
to a further two polynomial equations in $\omega^i$ and the derivatives of~$f$.

Resolution of these further equations combined with (\ref{firstthree}) will
lead to necessary and sufficient differential conditions on the function $f$ in
order that it admit a conjugate. All of this is under the assumption that
$X<0$ and we shall refer to this as the generic case. The case $X\equiv 0$
will be studied separately.

\begin{thm} \label{thm:integrability1}  
Let $\omega_j$ be a conjugate direction determined by~{\rm(\ref{firstthree})}.
Then provided $X<0$, the tensor field $\omega_{ij}$ is symmetric in its
indices if and only if
\begin{eqnarray}
f^{ijk}f_if_jf_k + f^{ijk}f_i\omega_j\omega_k + 
2f^{ij}f_j{}^kf_if_k - 2 f^{ij}f_j{}^k\omega_i\omega_k & = & 0 \label{eq4} \\
f^{ijk}f_if_j\omega_k + f^{ijk}\omega_i\omega_j\omega_k + 
4 f^{ij}f_j{}^kf_i\omega_k & = & 0. \label{eq5}
\end{eqnarray}
\end{thm}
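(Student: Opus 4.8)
The plan is to differentiate the three constraints in~(\ref{firstthree}) that define the conjugate direction, thereby pinning down the full tensor $\omega_{ij}=\nabla_i\omega_j$, and then to isolate its skew part. Applying $\nabla_k$ to each equation of~(\ref{firstthree}) and using that $\omega_i$ satisfies them at the point in question yields
\[
f_k{}^i\omega_i+f^i\omega_{ki}=0,\qquad \omega^i\omega_{ki}=f^if_{ki},\qquad 2f^{ij}\omega_j\,\omega_{ki}=-f_k{}^{ij}(\omega_i\omega_j+f_if_j)-2f^{ij}f_{ki}f_j.
\]
In every one of these, the second index $i$ of $\omega_{ki}$ is contracted against one of the three vectors $f^i$, $\omega^i$ and $w^i:=f^{ij}\omega_j$. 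Writing $A_k=-f_{ki}\omega^i$, $B_k=f_{ki}f^i$ and $C_k=-\frac12 f_k{}^{ij}(\omega_i\omega_j+f_if_j)-f^{ij}f_{ki}f_j$, these read $\omega_{ki}f^i=A_k$, $\omega_{ki}\omega^i=B_k$ and $\omega_{ki}w^i=C_k$.

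The hypothesis $X<0$ enters through the claim that $f^i$, $\omega^i$, $w^i$ are linearly independent. I would check this after normalising exactly as in Lemma~\ref{magic}, taking $f_1=f_2=f_{12}=0$, so that $f^i=(0,0,f_3)$, $\omega_i=(\omega_1,\omega_2,0)$ and $w^i=(f_{11}\omega_1,f_{22}\omega_2,\ast)$; the determinant of the three vectors is then $f_3\,\omega_1\omega_2\,(f_{22}-f_{11})$. By~(\ref{solution}) together with $X<0$, each of $f_3$, $\omega_1$, $\omega_2$ is non-zero and $f_{11}\neq f_{22}$, so the determinant is non-zero and $\{f,\omega,w\}$ is a basis. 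Consequently the three displayed equations determine $\omega_{ki}$ completely for each~$k$.

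Since $\{f,\omega,w\}$ is a basis, the $2$-form $\omega_{[ij]}$ vanishes if and only if its three pairings $\omega_{[ij]}f^i\omega^j$, $\omega_{[ij]}f^iw^j$ and $\omega_{[ij]}\omega^iw^j$ all vanish. For any two test vectors $V,W$ one has $2\omega_{[ki]}V^kW^i=V^k(\mathrm{RHS}_W)_k-W^k(\mathrm{RHS}_V)_k$, because contracting $\omega_{ki}$ on its free index by a second test vector converts the remaining contraction into the skew part. The first pairing is automatic: $2\omega_{[ki]}f^k\omega^i=f^kB_k-\omega^kA_k=f^{ij}f_if_j+f^{ij}\omega_i\omega_j=0$ by the third equation of~(\ref{firstthree}). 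This is precisely why only two scalar conditions survive. The remaining pairings give $2\omega_{[ki]}f^kw^i=f^kC_k-w^kA_k$ and $2\omega_{[ki]}\omega^kw^i=\omega^kC_k-w^kB_k$; substituting $A,B,C,w$ and repeatedly using the symmetry of $f_{ij}$ and of $f_{ijk}$ to merge terms such as $(f^{ij}f_j)(f_i{}^kf_k)$ into the forms appearing in~(\ref{eq4}) and~(\ref{eq5}), I expect these two equations to coincide exactly with the vanishing of the two pairings.

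I expect the main obstacle to be twofold. First, the linear-independence step is where $X<0$ is genuinely used, and one must confirm that the four smooth branches of $\omega_i$ stay well separated so that $\omega_{ij}$ is a bona fide smooth tensor; the normalisation makes this transparent, but the signs in~(\ref{solution}) should be checked to be compatible with a \emph{real} basis. Second, the index bookkeeping in matching $f^kC_k-w^kA_k$ and $\omega^kC_k-w^kB_k$ to~(\ref{eq4}) and~(\ref{eq5}) is routine but delicate, the only real input being the symmetry of the Hessian, which is what identifies contractions of the shape $(f^{ij}f_j)(f_i{}^k\,\cdot\,)$ with the ``chain'' contractions $f^{ij}f_j{}^k$ written in the theorem.
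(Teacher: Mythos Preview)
Your proposal is correct and follows essentially the same line as the paper's proof: both test the skew part of $\omega_{ij}$ against the frame $\{f^i,\omega^i,f^{ij}\omega_j\}$, observe that the $(f,\omega)$ pairing vanishes automatically by the third equation of~(\ref{firstthree}), and identify the two remaining pairings with (\ref{eq4}) and~(\ref{eq5}). The only cosmetic differences are that the paper establishes linear independence of the frame directly from the identity~(\ref{identity}) of Lemma~\ref{magic} (since $T_{ijk}T^{ijk}\neq 0$ when $X<0$) rather than via normalisation, and it differentiates the third constraint along $f^i$ and $\omega^i$ and then uses~(\ref{diff1}) to swap $\omega_{ij}f^i\mapsto -f_{ij}\omega^i$, which amounts to exactly your computation of $f^kC_k-w^kA_k$ and $\omega^kC_k-w^kB_k$.
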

\begin{proof}  Since $X\neq 0$, the identity of Lemma~\ref{magic}, namely
\begin{equation} \label{identity}
f^jf_j X + 12 T_{ijk}T^{ijk} = 0\,,
\end{equation}
where $T_{ijk} = f_{[i}\omega_jf_{k]l}\omega^l$, shows that the vector field
$f^{ij}\omega_j$ is independent of $f^i$ and $\omega^i$. Therefore, the tensor
field $\omega_{ij}$ is symmetric in its indices if and only if
\begin{equation} \label{symmetric}
u^iv^j(\omega_{ij} - \omega_{ji}) = 0\,,
\end{equation}
where $u^i$ and $v^j$ are any vector fields taken from the set
$\{f^i,\omega^i,f^{ij}\omega_j\}$. Looking back at (\ref{diff1}), which was
obtained by differentiating (\ref{omega}), we see that
$$f^i\omega^j(\omega_{ij}-\omega_{ji})=f^{ij}f_if_j+f^{ij}\omega_i\omega_j.$$
This already vanishes by assumption. It is our third equation
from~(\ref{firstthree}). Differentiating this third equation gives 
\begin{eqnarray*}
 0 & = & f^i\nabla_i(f^{jk}f_jf_k + f^{jk}\omega_j\omega_k) \\
 & = & f^{ijk}f_if_jf_k + f^{ijk}f_i\omega_j\omega_k + 
 2f^{jk}f^i{}_jf_if_k + 2 f^{jk}\omega_{ij}f^i\omega_k\,.
\end{eqnarray*}
We notice that the last term $f^{jk}\omega_{ij}f^i\omega_k$ occurs as the first
component of the symmetry condition 
$f^if^{jk}\omega_k(\omega_{ij} - \omega_{ji}) = 0$, which therefore holds
if and only if
$$
f^{ijk}f_if_jf_k + f^{ijk}f_i\omega_j\omega_k + 2f^{jk}f^i{}_jf_if_k + 
2 f^{jk}\omega_{ji}f^i\omega_k = 0\,,
$$
where we have replaced $\omega_{ij}$ by $\omega_{ji}$ in the last term. But now
(\ref{diff1}) shows that we can replace $\omega_{ji}f^i$ 
with~$-f_{ji}\omega^i$. This yields~(\ref{eq4}). Similarly, the equation 
$$0 = \omega^i\nabla_i(f^{jk}f_jf_k +f^{jk}\omega_j\omega_k)=\cdots$$ 
shows that the final symmetry condition 
$\omega^if^{jk}\omega_k(\omega_{ij}-\omega_{ji})=0$ reduces to~(\ref{eq5}).
\end{proof} 
\begin{cor}\label{allfiveequations}
Locally, a smooth function $f$ with $X<0$ admits a smooth conjugate if and only
if there is a smooth solution $\omega_i$ of the equations
{\rm(\ref{firstthree}), (\ref{eq4})} and\/~{\rm(\ref{eq5})}.
\end{cor}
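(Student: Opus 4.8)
The plan is to recognise that this corollary simply assembles the local analysis already in place: Theorem~\ref{thm:integrability1} converts the closedness of a conjugate direction into the two polynomial equations~(\ref{eq4}) and~(\ref{eq5}), and the remaining gap is bridged by two standard facts, namely that for the metric (hence torsion-free) connection a $1$-form $\omega_i$ is closed precisely when $\omega_{ij}=\nabla_i\omega_j$ is symmetric, and that a closed form is locally exact by the Poincar\'e lemma. Throughout I would assume $X<0$, so that Theorem~\ref{thm:integrability1} applies and, by~(\ref{solutionsintherealcase}), there are four distinct conjugate directions at each point, each extending uniquely and smoothly.

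For the \emph{only if} direction, I would suppose $f$ admits a smooth conjugate $g$ near the point in question and set $\omega_i=\nabla_ig$. The defining relations~(\ref{definition}) are exactly the first two equations of~(\ref{firstthree}), namely $f^i\omega_i=0$ and $\omega^i\omega_i=f^if_i$. Since $\omega_i$ is exact it is closed, so $\omega_{ij}$ is symmetric; the derivation of the third equation of~(\ref{firstthree}) in the proof of Theorem~\ref{one} used precisely this symmetry, so that equation holds as well. Thus $\omega_i$ is a smooth solution of~(\ref{firstthree}). Finally, symmetry of $\omega_{ij}$ together with $X<0$ lets me invoke Theorem~\ref{thm:integrability1}, which then yields~(\ref{eq4}) and~(\ref{eq5}).

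For the \emph{if} direction, I would suppose $\omega_i$ is a smooth solution of~(\ref{firstthree}), (\ref{eq4}) and~(\ref{eq5}). Because $X<0$, Theorem~\ref{thm:integrability1} applies and the validity of~(\ref{eq4}) and~(\ref{eq5}) forces $\omega_{ij}$ to be symmetric; hence $\omega_i$ is closed and, by the Poincar\'e lemma, locally exact, say $\omega_i=\nabla_ig$ for some smooth~$g$. The first two equations of~(\ref{firstthree}) then read $\langle\nabla f,\nabla g\rangle=0$ and $\|\nabla g\|=\|\nabla f\|$, which is precisely~(\ref{definition}); therefore $g$ is a local conjugate of~$f$.

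The genuine mathematical content has already been spent in Lemma~\ref{magic} and Theorem~\ref{thm:integrability1}, so the only points requiring care here are logical rather than computational. First, in the \emph{only if} direction one must verify that $\omega_i=\nabla_ig$ satisfies the \emph{third} equation of~(\ref{firstthree}), which is not part of the definition of a conjugate but is a consequence of it obtained by differentiation, valid because $\omega_i$ is exact. Second, one must check the equivalence between $\omega_i$ being closed and $\omega_{ij}$ being symmetric, which is legitimate since $\nabla$ is torsion-free. I expect the main obstacle, if any, to be purely one of matching the smoothness asserted on each side: the hypothesis of a \emph{smooth} $\omega_i$ on the algebraic side is exactly what the unique smooth extension of a conjugate direction guarantees in the case $X<0$, so no regularity is lost in passing between the two formulations.
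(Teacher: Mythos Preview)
Your proposal is correct and follows exactly the line of the paper's own (very terse) proof, which simply notes that symmetry of $\omega_{ij}$ is the closedness condition and invokes local exactness to write $\omega_i=\nabla_ig$. You have merely filled in the routine details---the Poincar\'e lemma, the torsion-free identification of closedness with symmetry, and the observation that the third equation of~(\ref{firstthree}) is a differential consequence of the first two when $\omega_i$ is exact---all of which the paper leaves implicit.
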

\begin{proof} Symmetry of $\omega_{ij}$ is precisely the condition that
$\omega_i$ be exact and, therefore, locally of the form $\nabla_ig$ for some
smooth function~$g$.
\end{proof}
Of course, we know that equations (\ref{firstthree}) admit smooth solutions
when $X<0$ so the only issue is whether we can find a solution for which
(\ref{eq4}) and (\ref{eq5}) are also satisfied. Also, if $\omega_i$ is a
solution then so is~$-\omega_i$.

\section{Resolution of the equations: the generic case} \label{sec:resolution}
Throughout this section we shall suppose that $X<0$. Recall that 
under this hypothesis $f$ has four conjugate directions at each point, 
occurring in two pairs that differ only by sign. In other words, the solutions 
of the equations (\ref{firstthree}) have the form $\{\pm\omega_i,\pm\eta_i\}$ 
for $\omega_i$ and $\eta_i$ smooth linearly independent $1$-forms. Let us 
consider the expressions
$$\begin{array}{rcl}
p^+&\equiv&f^{ijk}f_if_jf_k + f^{ijk}f_i\omega_j\omega_k + 
2f^{ij}f_j{}^kf_if_k - 2 f^{ij}f_j{}^k\omega_i\omega_k\\
p^-&\equiv&f^{ijk}f_if_jf_k + f^{ijk}f_i\eta_j\eta_k + 
2f^{ij}f_j{}^kf_if_k - 2 f^{ij}f_j{}^k\eta_i\eta_k\\
q^+&\equiv&f^{ijk}f_if_j\omega_k + f^{ijk}\omega_i\omega_j\omega_k + 
4 f^{ij}f_j{}^kf_i\omega_k\\
q^-&\equiv&f^{ijk}f_if_j\eta_k + f^{ijk}\eta_i\eta_j\eta_k + 
4 f^{ij}f_j{}^kf_i\eta_k
\end{array}$$
According to Corollary~\ref{allfiveequations} and the discussion that 
immediately follows it, we now know that $f$ admits a 
conjugate if and only if 
$$p^+=q^+=0\qquad\mbox{or}\qquad p^-=q^-=0.$$
These two possibilities are captured by the following theorem.
\begin{thm}\label{mainthm}
Locally, a smooth function $f$ with $X<0$ admits a smooth conjugate if and 
only if 
$$p^+p^-=0\qquad q^+q^-=0\qquad (p^+q^-)^2+(p^-q^+)^2=0.$$
\end{thm}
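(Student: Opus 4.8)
The plan is to reduce the theorem entirely to the logical statement established just before it, namely that (given $X<0$) the function $f$ admits a local conjugate if and only if $p^+=q^+=0$ or $p^-=q^-=0$. Everything then comes down to showing that the three displayed polynomial conditions are together equivalent to this disjunction. Throughout I would use that, since $X<0$, the four conjugate directions $\{\pm\omega_i,\pm\eta_i\}$ solving~(\ref{firstthree}) are genuinely real, so that after choosing local smooth branches $\omega_i$ and $\eta_i$ the quantities $p^\pm$ and $q^\pm$ are honest real-valued smooth functions.

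First I would unpack the third condition. Because $p^+q^-$ and $p^-q^+$ are real, the equation $(p^+q^-)^2+(p^-q^+)^2=0$ holds if and only if both $p^+q^-=0$ and $p^-q^+=0$. Hence the three stated conditions are exactly equivalent to the simultaneous vanishing of all four products
$$p^+p^-=0,\qquad q^+q^-=0,\qquad p^+q^-=0,\qquad p^-q^+=0.$$
So it suffices to prove that these four scalar equations hold if and only if $p^+=q^+=0$ or $p^-=q^-=0$.

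The forward implication of this equivalence is immediate: if $p^+=q^+=0$ then each of the four products contains a vanishing factor, and symmetrically if $p^-=q^-=0$. For the reverse implication I would argue by cases according to which of $p^+,q^+$ is nonzero. If $p^+\neq 0$, then $p^+p^-=0$ forces $p^-=0$ and $p^+q^-=0$ forces $q^-=0$, giving $p^-=q^-=0$; if instead $p^+=0$ but $q^+\neq 0$, then $q^+q^-=0$ and $p^-q^+=0$ likewise yield $q^-=0$ and $p^-=0$. The only remaining possibility is $p^+=q^+=0$. In every case one of the two pairs vanishes, which is exactly the disjunction. Combining this with the characterisation recalled at the outset and with Corollary~\ref{allfiveequations} then closes the full chain of equivalences.

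I do not expect a genuine obstacle here, since the analytic content has already been extracted in Theorem~\ref{thm:integrability1} and Corollary~\ref{allfiveequations}; the present statement merely repackages the disjunction ``$p^+=q^+=0$ or $p^-=q^-=0$'' into conditions symmetric under interchanging the two conjugate directions $\omega_i\leftrightarrow\eta_i$ (and invariant under the sign ambiguities $\omega_i\mapsto-\omega_i$), so that they descend to conditions intrinsic to~$f$. The single point requiring care is the reality of $p^\pm$ and $q^\pm$, which is what lets me pass from the vanishing of a sum of two squares to the vanishing of each square; this is precisely where the hypothesis $X<0$ enters.
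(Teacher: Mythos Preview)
Your proposal is correct and takes exactly the same approach as the paper, which dispatches the proof in a single sentence (``Evidently, the vanishing of these three quantities is equivalent to $p^\pm=q^\pm=0$''); you have simply unpacked that ``evidently'' into the obvious case analysis using reality of $p^\pm,q^\pm$ under the hypothesis $X<0$.
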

\begin{proof}Evidently, the vanishing of these three quantities is equivalent 
to $p^+=q^+=0$ or $p^-=q^-=0$. 
\end{proof}
The condition $p^+p^-=0$ was already resolved in~\cite{be}. We recapitulate and
refine the argument as follows. Firstly, we write $p^+$ using normal
coordinates (\ref{nor}) to discover that
\begin{equation}\label{pplus}p^+=p_e+p_o\omega_1\omega_2\end{equation}
where
\begin{equation}\label{pe}\begin{array}{l}
p_e=f_3{}^3f_{333}+2f_3{}^2(f_{13}{}^2+f_{23}{}^2+f_{33}{}^2)\\
\phantom{p_e}
+(f_3f_{113}-2f_{11}{}^2-2f_{13}{}^2)\omega_1{}^2
+(f_3f_{223}-2f_{22}{}^2-2f_{23}{}^2)\omega_2{}^2
\end{array}\end{equation}
and 
$$p_o=2f_3f_{123}-4f_{13}f_{23}.$$
In normal co\"ordinates $(\eta_1,\eta_2)=(\pm\omega_1,\mp\omega_2)$. It 
follows that 
\begin{equation}\label{pminus}p^-=p_e-p_o\omega_1\omega_2\end{equation}
and hence that
\begin{equation}\label{ppluspminus}
p^+p^-=p_e{}^2-p_o{}^2\omega_1{}^2\omega_2{}^2.
\end{equation} 
But, since $\omega_i$ is subject to (\ref{firstthree}), we know that
$\omega_1{}^2$ and $\omega_2{}^2$ are determined in normal co\"ordinates
by~(\ref{solution}). In \cite{be} we used this to eliminate $\omega_1{}^2$ and
$\omega_2{}^2$ from $p_e$ in (\ref{pe}) and then from $p^+p^-$ in
(\ref{ppluspminus}) to discover by trial and error that $Y^2p^+p^-$ could be
written as an explicit Riemannian invariant in the derivatives of~$f$, where
$Y$ is the invariant $Z^2-2JX$ from~(\ref{thisisY}). We can argue more
systematically as follows. Firstly, we may obtain $\eta_i$ from $\omega_i$ 
without recourse to normal co\"ordinates.
\begin{lemma}\label{formulaforeta}
The conjugate direction $\eta_i$ is determined by the conjugate direction 
$\omega_i$ via the formula
\begin{equation}\label{eta}\sqrt{Y}\eta_i=
2f^{jk}f_j\omega_kf_i+(Z-2f^{jk}f_jf_k)\omega_i-2Jf_i{}^j\omega_j.
\end{equation}\end{lemma}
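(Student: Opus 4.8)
The plan is to exploit the fact that equation~(\ref{eta}) is a tensorial identity: both sides are $1$-forms assembled covariantly from $f_i$, $f_{ij}$, $\omega_i$ and the scalar invariants $J$, $Z$ and $Y$. Consequently it suffices to verify it at a single point in the normal co\"ordinates~(\ref{nor}), exactly as in the proof of Lemma~\ref{magic}. This is the normalisation technique the paper has already adopted throughout, and it is the route I expect to be shortest.

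First I would record the normal-form data already computed: $J=f_3{}^2$, the formula $Z=f_3{}^2(f_{11}+f_{22}+2f_{33})$ from~(\ref{thisisZ}), and $Y=f_3{}^4(f_{11}-f_{22})^2$ from~(\ref{thisisY}), so that $\sqrt{Y}=f_3{}^2|f_{11}-f_{22}|$. I would also recall that the conjugate directions satisfy $\omega_3=0$ together with~(\ref{become}), and that in these co\"ordinates the second pair is $(\eta_1,\eta_2,\eta_3)=(\pm\omega_1,\mp\omega_2,0)$.

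Next I would evaluate the three terms on the right of~(\ref{eta}) component by component. Since $f_1=f_2=0$, the contraction $f^{jk}f_j\omega_k$ collapses to $f_3(f_{13}\omega_1+f_{23}\omega_2)$, so the first term $2f^{jk}f_j\omega_k f_i$ is supported only on the $i=3$ component. The scalar coefficient in the second term simplifies, using $f^{jk}f_jf_k=f_{33}f_3{}^2$, to $Z-2f^{jk}f_jf_k=f_3{}^2(f_{11}+f_{22})$. The third term $-2Jf_i{}^j\omega_j$ is computed directly using $f_{12}=0$ and $\omega_3=0$. Assembling these, the $i=3$ components of the first and third terms cancel, while the $i=1$ and $i=2$ components combine to give $f_3{}^2(f_{22}-f_{11})\omega_1$ and $f_3{}^2(f_{11}-f_{22})\omega_2$ respectively. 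Dividing by $\sqrt{Y}=f_3{}^2|f_{11}-f_{22}|$ then recovers $(\eta_1,\eta_2,\eta_3)=(\mp\omega_1,\pm\omega_2,0)$, which is precisely the characterisation of the second conjugate direction.

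The only genuine subtlety, and the point I would be most careful about, is the interplay of signs: $\sqrt{Y}$ is defined only up to sign and $\eta_i$ is itself only defined up to sign, so I would emphasise that formula~(\ref{eta}) pins down $\eta_i$ exactly to the extent that a conjugate direction can be pinned down, and that the two sign choices are consistent. A cleaner but more laborious alternative would avoid normalisation entirely: since $X<0$ the three vectors $f_i$, $\omega_i$ and $f_i{}^j\omega_j$ are linearly independent (as used in the proof of Theorem~\ref{thm:integrability1}), so $\eta_i$ must be some linear combination $af_i+b\omega_i+cf_i{}^j\omega_j$, and one could fix $a$, $b$, $c$ by imposing that the result again satisfy~(\ref{firstthree}) and be distinct from $\pm\omega_i$. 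I expect the sign bookkeeping to be the main thing to get right.
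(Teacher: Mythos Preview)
Your proposal is correct and follows essentially the same route as the paper: verify the coordinate-free identity~(\ref{eta}) in the normal co\"ordinates~(\ref{nor}), compute the three terms on the right-hand side component by component to obtain $f_3{}^2(f_{22}-f_{11})\omega_1$, $f_3{}^2(f_{11}-f_{22})\omega_2$, and $0$, and compare with $\sqrt{Y}\eta_i$. The paper treats the sign ambiguity exactly as you suggest, noting that the outcome $(\eta_1,\eta_2)=\pm(\omega_1,-\omega_2)$ depends on the sign chosen for the square root of~$Y$.
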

\begin{proof} Since it is evidently co\"ordinate-free, we may verify this
formula in normal co\"ordinates~(\ref{nor}). Substituting from (\ref{thisisZ}) 
we see that the right hand side of (\ref{eta}) becomes
$$2(f_{13}f_3\omega_1+f_{23}f_3\omega_2)f_i
+f_3{}^2(f_{11}+f_{22})\omega_i-2f_3{}^2f_i{}^j\omega_j$$
In more detail,
\begin{center}\begin{tabular}{c|c}
$i$&right hand side of (\ref{eta})\\ \hline
$1$&$f_3{}^2(f_{11}+f_{22})\omega_1-2f_3{}^2(f_{11}\omega_1)=
f_3{}^2(f_{22}-f_{11})\omega_1$\rule{0pt}{12pt}\\
$2$&$f_3{}^2(f_{11}+f_{22})\omega_2-2f_3{}^2(f_{22}\omega_2)=
f_3{}^2(f_{11}-f_{22})\omega_2$\rule{0pt}{12pt}\\
$3$&$2(f_{13}f_3\omega_1+f_{23}f_3\omega_2)f_3
-2f_3{}^2(f_{13}\omega_1+f_{23}\omega_2)=0$
\end{tabular}\end{center}
On the other hand, from (\ref{thisisY}) the left hand side of (\ref{eta})
becomes
$$\sqrt{f_3{}^4(f_{11}-f_{22})^2}\eta_i$$
and the whole of (\ref{eta}) reduces to 
$(\eta_1,\eta_2)=\pm(\omega_1,-\omega_2)$ depending on the sign chosen for the 
square root of~$Y$.
\end{proof}
Note that since $Y>0$ when $X<0$ we could always insist of taking the positive
square root of $Y$ in~(\ref{eta}) to obtain a consistent smooth choice of
conjugate direction $\eta_i$ once $\omega_i$ is chosen. In any case, now let us
consider $p_e$ in more detail. {From} (\ref{pplus}) and (\ref{pminus})
we see that 
\begin{equation}\label{peinvariant}\textstyle p_e=\frac12(p^++p^-).
\end{equation}
Note that $p^+$ does not see the sign of $\omega_i$ and $p^-$ does not see the
sign of $\eta_i$. Moreover, interchanging $\omega_i$ and $\eta_i$ interchanges
$p^+$ and $p^-$. Hence, from (\ref{peinvariant}) we see that $p_e$ depends only
on the derivatives of~$f$. In principle, we could now use (\ref{eta}) to
substitute for $\eta_i$ in $p^-$. We conclude that $Yp_e$ is a polynomial in
$f_i,f_{ij},f_{ijk}$, and $\omega_i$, which is actually independent of
$\omega_i$ when (\ref{firstthree}) holds. Equation (\ref{firstthree}) may now
be used to eliminate $\omega_i$ from $Yp_e$ leaving a polynomial in
$f_i,f_{ij},f_{ijk}$. In practice, this is quite an intricate matter, which we
consign to~\S\ref{sec:elim-om}. The result is:
$$\textstyle Yp_e=\frac12Y(p^++p^-)=\frac12(ZS-2XR+2XY),$$
where $R$ and $S$ are two further conformal invariants derived
in~\S\ref{conformalinvariants}.

Let us apply similar reasoning to some of the other quantities occurring above.
From (\ref{pplus}) and (\ref{pminus}) we see that
$$\textstyle p_0\omega_1\omega_2=\frac12(p^+-p^-).$$
As we have already observed, interchanging $\omega_i$ and $\eta_i$ interchanges
$p^+$ and~$p^-$, hence changing the sign of~$p^+-p^-$. As is readily verified
in normal co\"ordinates, another quantity with this property is
$$E\equiv\epsilon^{ijk}f_i\omega_jf_k{}^\ell\omega_\ell$$
where $\epsilon^{ijk}$ is a choice of volume form, uniquely normalised up to 
sign by $\epsilon^{ijk}\epsilon_{ijk}=6$. Specifically, if we further 
constrain our normal co\"ordinates (\ref{nor}) by requiring that 
$\epsilon^{123}=1$, then 
$$E=f_3(f_{22}-f_{11})\omega_1\omega_2.$$
As above, it follows that we may use (\ref{eta}) to eliminate $\eta_i$ from  
$$\textstyle YEp_o\omega_1\omega_2=\frac12E(Yp^+-Yp^-).$$
Moreover, this quantity is stable under interchange of $\omega_i$ and $\eta_i$.
It must be a polynomial in $f_i,f_{ij},f_{ijk}$ alone, which is given by:
$$\textstyle YEp_o\omega_1\omega_2=\frac12E(Yp^+-Yp^-)=-\frac14JXV,$$
where this calculation is once more detailed in~\S\ref{sec:elim-om} and $V$ is
one of our list of conformal invariants derived in~\S\ref{conformalinvariants}.
But from Lemma~\ref{magic}, we have the identity
\begin{equation} \label{identityEX}
\textstyle E^2=-\frac12J^2X.
\end{equation}
We conclude that
\begin{eqnarray*}
P & \equiv & 8Y^2p^+p^-=2Y^2(p^++p^-)^2-2Y^2(p^+-p^-)^2 \\
& = & 2(ZS-2XR+2XY)^2 + XV^2.
\end{eqnarray*}
The vanishing of $P$ is then our fourth conformally invariant condition (in
addition to the first three (\ref{firstthree})), obtained in~\cite{be}, for the
existence of a conjugate in the generic case $X<0$. We now proceed similarly to
obtain the two other conditions to provide a necessary and sufficient set of
conditions.

First we observe that $Q\equiv Y\sqrt{Y}q^+q^-$ is conformally invariant, where
we use Lemma~\ref{formulaforeta} to define $\eta_i$ by a choice of square root
for~$Y$. Certainly it is a Riemannian invariant and we shall compute it in
normal co\"ordinates~(\ref{nor}). According to the proof of
Lemma~\ref{formulaforeta}, we may take $$\sqrt{Y}=f_3{}^2(f_{22}-f_{11})\qquad
\eta_1=\omega_1\qquad\eta_2=-\omega_2,$$
in which case
$$q^+=q_1\omega_1+q_2\omega_2\quad\mbox{and}\quad q^-=q_1\omega_1-q_2\omega_2,
$$
where
$$\begin{array}{rcl}
q_1&=&f_3{}^2f_{133}+f_{111}\omega_1{}^2+3f_{122}\omega_2{}^2
+4f_3f_{13}(f_{11}+f_{33})\\
q_2&=&f_3{}^2f_{233}+f_{222}\omega_2{}^2+3f_{112}\omega_1{}^2
+4f_3f_{23}(f_{22}+f_{33})\\
\end{array}$$
so that 
$$Q=Y\sqrt{Y}q^+q^-=f_3{}^6(f_{22}-f_{11})^3
(q_1{}^2\omega_1{}^2-q_2{}^2\omega_2{}^2)$$
from which $\omega_1{}^2$ and $\omega_2{}^2$ may be eliminated 
with~(\ref{solution}). The result is a polynomial expression in $f$ and its 
derivatives. In terms of the various conformal invariants developed 
in~\S\ref{conformalinvariants} it turns out that
$$\begin{array}{rcl}
Q&=&\frac{1}{6}JZB-\frac{1}{4}JU-\frac{1}{4}ZS^2\\
&&\enskip{}+X(XZ^3-JX^2Z+6W+\frac{1}{4}JM-\frac{2}{7}ZXR+\frac{5}{7}RS\\
&&\qquad\quad{}-\frac{15}{7}N+\frac{2}{9}ZA-\frac{9}{10}F
               -\frac{2}{21}ZK+\frac{10}{21}T+\frac{6}{25}G-\frac{17}{42}JD),
\end{array}$$
as may be verified in normal form~(\ref{nor}).

The final condition $(p^+q^-)^2 + (p^-q^+)^2 = 0$ can similarly be expressed in
terms of conformal invariants; although we do not attempt to write down the
expression, we discuss how this can be done in~\S\ref{sec:elim-om}.

\section{Special cases} \label{sec:sc}

\subsection{Functions with a unique conjugate direction} \label{sec:X=0}
 Suppose now that $f$ is a function that admits a unique
conjugate direction up to sign. By (\ref{solutionsintherealcase}), this occurs
when $X=0$ and $Y>0$. We first prove an analogue of Theorem
\ref{thm:integrability1}.
\begin{thm} \label{thm:integrability2}  
Let $\omega_j$ be a conjugate direction determined by~{\rm(\ref{firstthree})},
with $X=0$ and $Y>0$. Then the tensor field $\omega_{ij}$ is symmetric in its
indices if and only if
\begin{eqnarray}
\epsilon^{ijk}f_i\omega_j
\left( Jf_k{}^{lm}f_l\omega_m - 2f_{kl}f^l(f^{mn}f_m\omega_n)\right) & = 
& 0 \label{4bis} \\
\epsilon^{ijk}f_i\omega_j
\left( Jf_k{}^{lm}\omega_l\omega_m + f_k{}^lf_l(f^{mn}f_mf_n + Z)\right) & = 
& 0  \label{5bis}
\end{eqnarray}
\end{thm}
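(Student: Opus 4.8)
The plan is to follow the proof of Theorem~\ref{thm:integrability1}, changing only the auxiliary frame---but that change is forced and is the whole point. First I would note that when $X=0$ the identity of Lemma~\ref{magic} gives $T_{ijk}T^{ijk}=0$; since $X=0,\,Y>0$ is a case of \emph{real} solutions, $T_{ijk}$ is real and so $T_{ijk}=0$. As $T_{ijk}=f_{[i}\omega_jf_{k]\ell}\omega^\ell$, this says that $f^i$, $\omega^i$ and $w^i\equiv f^{ij}\omega_j$ are linearly dependent; since $f^i,\omega^i$ are orthogonal and non-zero, $w^i$ lies in their span. Contracting with $f_i$ and $\omega_i$ and using $f^{ij}\omega_i\omega_j=-f^{ij}f_if_j$ from~(\ref{firstthree}) fixes the coefficients and yields
\[
Jf^{ij}\omega_j=(f^{jk}f_j\omega_k)\,f^i-(f^{jk}f_jf_k)\,\omega^i,\qquad(\star)
\]
with $J=f^kf_k$. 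Thus $w^i$, which was the third frame vector in the generic case, is no longer available, and I must instead complete the frame with $n^i\equiv\epsilon^{ijk}f_j\omega_k$, orthogonal to $f^i$ and $\omega^i$.

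With the frame $\{f^i,\omega^i,n^i\}$, symmetry of $\omega_{ij}$ is equivalent to the vanishing of $u^iv^j(\omega_{ij}-\omega_{ji})$ on the three pairs drawn from the frame. Exactly as before, the pair $(f^i,\omega^i)$ produces $f^{ij}f_if_j+f^{ij}\omega_i\omega_j$, which vanishes by~(\ref{firstthree}). The two remaining conditions are $D_f\equiv f^in^j(\omega_{ij}-\omega_{ji})$ and $D_\omega\equiv\omega^in^j(\omega_{ij}-\omega_{ji})$; these are what I must identify with~(\ref{4bis}) and~(\ref{5bis}). Using the differentiated constraints~(\ref{diff1}), namely $\omega_{ij}f^j=-f_{ij}\omega^j$ and $\omega_{ij}\omega^j=f_{ij}f^j$, together with $n^iw_i=0$, I would reduce these to $D_f=n^jf^i\omega_{ij}$ and $D_\omega=n^j\omega^i\omega_{ij}-n^jf_{jk}f^k$, that is, to the $n^i$-components of the directional derivatives $\nabla_f\omega$ and $\nabla_\omega\omega$.

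Here is the main obstacle, and the real difference from the generic case. Differentiating~(\ref{firstthree}) determines $\omega_{ij}f^j$ and $\omega_{ij}\omega^j$ but says nothing about the $n^i$-component of $\nabla\omega$---this is precisely the degeneration of the Jacobian $\{f^i,\omega^i,w^i\}$ at $X=0$---so $D_f$ and $D_\omega$ are not visibly expressible in the jets of $f$. The remedy is to differentiate the extra relation~($\star$), which now holds identically because $X\equiv0$ near the point. Contracting the derivative of~($\star$) with $f^m$ and with $\omega^m$, and then with $n_i$, every term in which a derivative lands on a scalar coefficient drops out, since each such term multiplies $f^i,\omega^i$ or $w^i$, all orthogonal to $n^i$. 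What survives in each case is a third-derivative term $n^if_{ijk}f^j\omega^k$ (respectively $n^if_{ijk}\omega^j\omega^k$)---the leading term of~(\ref{4bis}) (respectively~(\ref{5bis}))---plus the sought unknown $n^i$-component; crucially the $f^i$- and $\omega^i$-components of $\nabla_f\omega$ and $\nabla_\omega\omega$ are already fixed by~(\ref{diff1}), so a single scalar unknown survives in each contraction.

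What makes the computation close is that this unknown enters with the coefficient $K\equiv f^{ij}n_in_j/J+f^{jk}f_jf_k$, which is both computable and non-zero. Decomposing the trace $f^i{}_i$ in the orthogonal frame and using $f^{ij}\omega_i\omega_j=-f^{ij}f_if_j$ gives $f^{ij}n_in_j=J^2f^i{}_i$, whence $K=f^{jk}f_jf_k+Jf^i{}_i=Z$; and since $X=0$ the invariant $Y=Z^2-2JX$ equals $Z^2$, so $K=Z\neq0$ is exactly the standing hypothesis $Y>0$. Dividing by $K=Z$ to solve for the unknown $n^i$-component of each directional derivative and back-substituting, I expect $D_f$ and $D_\omega$ to emerge as fixed non-zero multiples (by $1/Z$, up to sign) of the left-hand sides of~(\ref{4bis}) and~(\ref{5bis}). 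Then $D_f=D_\omega=0$ if and only if~(\ref{4bis}) and~(\ref{5bis}) hold, which---since $\{f^i,\omega^i,n^i\}$ is a frame---is equivalent to the symmetry of $\omega_{ij}$. I expect the only delicate point to be the bookkeeping that isolates~($\star$) and then extracts the single unknown cleanly; once $K=Z$ is recognised, everything downstream is forced.
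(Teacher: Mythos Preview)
Your proposal is correct and rests on the same mechanism as the paper's proof: since $X\equiv 0$ forces the degeneracy relation $Jf^{ij}\omega_j=(f^{jk}f_j\omega_k)f^i-(f^{jk}f_jf_k)\omega^i$ to hold identically, differentiating it supplies the equation that replaces the lost third direction, and the unknown $n$-component of $\nabla\omega$ then appears with coefficient $Z$, which is non-zero precisely because $Y=Z^2>0$. Carrying your computation through, one finds exactly $(\text{LHS of \eqref{4bis}})+Z\,D_f=0$ and $(\text{LHS of \eqref{5bis}})+Z\,D_\omega=0$, confirming your expectation.

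The organisation, however, differs from the paper's. You complete the frame with the obvious cross-product $n^i=\epsilon^{ijk}f_j\omega_k$ and differentiate the \emph{vector} relation~$(\star)$, then contract with $n_i$ and with $f^m$ or $\omega^m$. The paper instead differentiates the equivalent \emph{scalar} relation $\epsilon^{ijk}f_i\omega_jf_k{}^\ell\omega_\ell=0$; the derivative of this determinant naturally produces a term $\omega_{mi}\nu^i$ with $\nu^i=\epsilon^{ijk}f_jf_k{}^\ell\omega_\ell-\epsilon^{jk\ell}f_j\omega_kf_\ell{}^i$, and it is this $\nu^i$ (whose component along $n^i$ is $-Z/J$) that serves as the third frame vector. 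Your route is arguably more transparent, since $n^i$ is the canonical choice and the identity $f^{ij}n_in_j=J^2f^k{}_k$ makes the appearance of $K=Z$ immediate; the paper's route has the advantage that differentiating a single scalar keeps the bookkeeping to a minimum and lands directly on \eqref{4bis} and \eqref{5bis} without first decomposing $\nabla_f\omega$ and $\nabla_\omega\omega$ in the frame.
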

\begin{proof} As in the proof of Theorem \ref{thm:integrability1},
$\omega_{ij}$ is symmetric in its indices if and only if
$u^iv^j(\omega_{ij}-\omega_{ji})=0$, where $u^i$ and $v^j$ are linearly
independent vector fields. However, since $X=0$, by Lemma \ref{magic}, the
vector field $f^{ij}\omega_j$ is a linear combination of $f^i$ and $\omega^i$
and we have to use an alternative. A judicious choice turns out to be the
vector field
$$
\nu^i = \epsilon^{ijk}f_jf_k{}^l\omega_l - \epsilon^{jkl}f_j\omega_kf_l{}^i\,.
$$
A short calculation using the identity
$$
\epsilon^{ijk}\epsilon_{lmn} = 6 \delta_{[l}^i\delta_m^j\delta_{n]}^k\,,
$$
shows that $\epsilon_{ijk}\nu^if^j\omega^k = - Z/J$, which is non-zero by
hypothesis (since $Y = Z^2$). In particular, $\nu^i$ has a non-zero component
orthogonal to $f^i$ and $\omega^i$. In order to bring this vector field into
play, rather than differentiate the third equation from (\ref{firstthree}), we
differentiate the equation:
\begin{equation} \label{det}
\epsilon^{ijk}f_i\omega_jf_k{}^l\omega_l = 0\,.
\end{equation}
This gives
\begin{equation} \label{diff-det}
\epsilon^{ijk}f_i\omega_jf_{klm}\omega^l +
\epsilon^{ijk}f_{mi}\omega_jf_k{}^l\omega_l - \omega_{mi}\nu^i = 0\,.
\end{equation} 
First transvect this with $f^m$. Then the resulting symmetry condition
$f^m\nu^i(\omega_{mi} - \omega_{im})=0$ holds if and only if
$$
\epsilon^{ijk}f_i\omega_jf_{klm}f^l\omega^m + 
\epsilon^{ijk}f_{im}f^m\omega_jf_k{}^l\omega_l - f^m\omega_{im}\nu^i = 0\,.
$$
But from (\ref{diff1}), the last term can be replaced by $\omega^mf_{im}\nu^i$
which is equal to $[\epsilon^{lmn}f_lf_m{}^rf_r\omega_n/(f^sf_s)]
f^{ij}f_i\omega_j$ (since $\epsilon^{ijk}f_j\omega_kf_{im}\omega^m = 0$ by
(\ref{det})). On multiplying through by $J$, we obtain the equation
\begin{equation} \label{pre-4bis}
J\epsilon^{ijk}f_i\omega_jf_k{}^{lm}f_l\omega_n - 
J\epsilon^{ijk}f_i{}^lf_lf_j{}^m\omega_m\omega_k -
(\epsilon^{ijk}f_i\omega_jf_k{}^lf_l)f^{mn}f_m\omega_n = 0\,.
\end{equation}
However, from (\ref{det}) we deduce the identity
$$
Jf_{jm}\omega^m + (f^{kl}f_kf_l)\omega_j - (f^{kl}f_k\omega_l)f_j = 0\,.
$$
Indeed, the left-hand side is both orthogonal and colinear to the span of $f_j$
and $\omega_j$. On replacing $Jf_{jm}\omega_m$ by $(f^{kl}f_k\omega_l)f_j -
(f^{kl}f_kf_l)\omega_j$ in the middle term of (\ref{pre-4bis}), we obtain
(\ref{4bis}). Similarly, on transvecting (\ref{diff-det}) with $\omega^m$, we
conclude that the symmetry condition $\omega^m\nu^i(\omega_{mi} - \omega_{im})
= 0$ is equivalent to~(\ref{5bis}).
\end{proof}

As for the generic case, we can summarise the conditions that $f$ admits a
conjugate as follows.
\begin{cor}\label{allfiveequations-bis}
Locally, a smooth function $f$ with $X=0$ admits a smooth conjugate
if and only if there is a smooth solution $\omega_i$ of the equations
{\rm(\ref{firstthree}), (\ref{4bis})} and\/~{\rm(\ref{5bis})}.
\end{cor}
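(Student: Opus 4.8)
The plan is to mirror the proof of Corollary~\ref{allfiveequations}, now invoking Theorem~\ref{thm:integrability2} in place of Theorem~\ref{thm:integrability1}. First I would recall from the discussion preceding Theorem~\ref{thm:integrability1} that finding a local conjugate for~$f$ is entirely equivalent to producing a closed $1$-form $\omega_j$ satisfying the algebraic system~(\ref{firstthree}) throughout a neighbourhood of the point in question. Indeed, if $\omega_j=\nabla_j g$, then the two conditions in~(\ref{firstthree}) read $\langle\nabla f,\nabla g\rangle=0$ and $\|\nabla f\|=\|\nabla g\|$, which is precisely~(\ref{definition}); conversely, a closed $\omega_j$ is locally exact by the Poincar\'e lemma, so $\omega_j=\nabla_j g$ for some smooth~$g$.

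Next I would address existence and smoothness of the extension. Since we are in the case $X=0$ with $Y>0$, the trichotomy~(\ref{solutionsintherealcase}) guarantees that~(\ref{firstthree}) has exactly two real solutions $\pm\omega_i$ at each nearby point, and these are distinct because $\omega^i\omega_i=f^if_i\neq 0$ under the standing hypothesis $f_i\neq 0$. Hence a choice of $\omega_i$ at the base point extends uniquely and smoothly as a conjugate direction on a neighbourhood, exactly as in the generic case.

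The key step is to identify closedness of $\omega_j$ with the differential equations. The exterior derivative is $\nabla_{[i}\omega_{j]}=\frac12(\omega_{ij}-\omega_{ji})$, so $\omega_j$ is closed if and only if $\omega_{ij}$ is symmetric; and Theorem~\ref{thm:integrability2}, valid under the present hypotheses $X=0$ and $Y>0$, translates this symmetry into the two polynomial conditions~(\ref{4bis}) and~(\ref{5bis}). Combining the three observations gives the asserted equivalence: $f$ admits a local conjugate if and only if the smoothly extended conjugate direction $\omega_i$ satisfies~(\ref{firstthree}), (\ref{4bis}) and~(\ref{5bis}).

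The corollary is essentially formal once Theorem~\ref{thm:integrability2} is in hand, so the only real care needed is in the setup rather than in any computation. The substantive difficulty — that the triple $\{f^i,\omega^i,f^{ij}\omega_j\}$ used to test symmetry in the generic case degenerates when $X=0$, forcing the introduction of the replacement field $\nu^i$ — has already been absorbed into Theorem~\ref{thm:integrability2}. I would therefore be careful only to record the appeal to the Poincar\'e lemma and the non-vanishing $\omega^i\omega_i=f^if_i\neq 0$, so that a solution $\omega_i$ of the three systems genuinely integrates to a smooth conjugate $g$ with $\omega_i=\nabla_i g$.
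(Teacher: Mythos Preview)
Your argument is correct and follows exactly the route the paper intends: the corollary is stated without proof, being the immediate analogue of Corollary~\ref{allfiveequations} with Theorem~\ref{thm:integrability2} replacing Theorem~\ref{thm:integrability1}. Your only addition is to make explicit the standing hypothesis $Y>0$ of \S\ref{sec:X=0}, which is indeed needed both for the uniqueness of $\pm\omega_i$ via~(\ref{solutionsintherealcase}) and for the applicability of Theorem~\ref{thm:integrability2}; the case $X=Y=0$ is handled separately by Theorem~\ref{thm:XYzero}.
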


We can express these conditions in terms of the derivatives of $f$
either by using invariant arguments, or by expressing them in normal
co\"ordinates. To do this invariantly, the following lemma can be employed to
eliminate quadratic terms in $\omega^i$. 
\begin{lemma}
\label{lem:identity-quad} Suppose $X= 0$ and $Y\neq 0$. 
Let $Q^{ij}$ be any symmetric form. Then
\begin{equation} \label{identity-quad}
ZQ^{ij}\omega_i\omega_j = -ZQ^{ij}f_if_j + 2JQ^{ij}f_if_j{}^kf_k +
J^2(f_k{}^kQ_l{}^l - Q^{kl}f_{kl})\,.
\end{equation}
\end{lemma}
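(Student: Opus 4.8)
The plan is to read (\ref{identity-quad}) as an identity between symmetric $2$-tensors. As $Q^{ij}$ is an arbitrary symmetric form, the asserted equation is equivalent to the vanishing of
$$S_{ij}\equiv Z\omega_i\omega_j+Zf_if_j-2Jf_{(i}f_{j)}{}^kf_k-J^2f^k{}_kg_{ij}+J^2f_{ij},$$
where $g_{ij}$ is the metric. To show $S_{ij}=0$ I would contract it against the orthogonal frame $\{f^i,\omega^i,\zeta^i\}$, with $\zeta^i\equiv\epsilon^{ijk}f_j\omega_k$. By (\ref{firstthree}), $f^i$ and $\omega^i$ are orthogonal with $\omega^i\omega_i=J$; moreover $\zeta^i$ is orthogonal to both, and $\epsilon_{ijk}\epsilon^{ilm}=\delta^l_j\delta^m_k-\delta^m_j\delta^l_k$ gives $\zeta^i\zeta_i=J^2$. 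Since $J\neq0$ these span, so it suffices to verify the six contractions $S_{ij}u^iv^j$ for $u,v\in\{f,\omega,\zeta\}$.

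Five of the six should vanish from (\ref{firstthree}) and the definition (\ref{3Lap}) of $Z$ alone, with no use of the hypothesis $X=0$. Both $S_{ij}f^if^j$ and $S_{ij}\omega^i\omega^j$ collapse to $J^2(Z-f^{jk}f_jf_k-Jf^k{}_k)$, which is zero by (\ref{3Lap}); for the $\omega\omega$ case one first replaces $f^{ij}\omega_i\omega_j$ by $-f^{ij}f_if_j$ via the third equation of (\ref{firstthree}). The mixed contractions $S_{ij}f^i\omega^j$ and $S_{ij}f^i\zeta^j$ reduce, after orthogonality eliminates the other terms, to $-J^2f_{jk}f^kw^j+J^2f_{ij}f^iw^j$ (with $w=\omega$ or $\zeta$), which cancels by symmetry of $f_{ij}$. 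For $S_{ij}\zeta^i\zeta^j$ I would substitute the completeness relation $g^{ij}=(f^if^j+\omega^i\omega^j)/J+\zeta^i\zeta^j/J^2$ to obtain $f_{ij}\zeta^i\zeta^j=J^2f^k{}_k-Jf_{ij}f^if^j-Jf_{ij}\omega^i\omega^j$; the last two terms cancel by the third equation of (\ref{firstthree}), leaving $f_{ij}\zeta^i\zeta^j=J^2f^k{}_k$, which cancels the term $-J^2f^k{}_kg_{ij}\zeta^i\zeta^j$ because $\zeta^i\zeta_i=J^2$.

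The one contraction that carries the hypothesis, and the step I expect to be the crux, is $S_{ij}\omega^i\zeta^j$. Orthogonality kills every term except $J^2f_{ij}\omega^i\zeta^j$, and this survivor is exactly the quantity $E$ of (\ref{det}): expanding $\zeta^j$ and using the cyclic symmetry of $\epsilon^{ijk}$ gives $f_{ij}\omega^i\zeta^j=\epsilon^{jkl}(f_{ij}\omega^i)f_k\omega_l=\epsilon^{ijk}f_i\omega_jf_k{}^l\omega_l=E$. By (\ref{identityEX}) we have $E^2=-\tfrac12J^2X$, so the hypothesis $X=0$ forces $E=0$ and the final contraction vanishes. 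The substance of the argument is precisely this recognition that the leftover term equals $E$ and is therefore controlled by $X$; it is also the only point at which $X=0$ is used (the companion hypothesis $Y\neq0$, equivalent to $Z\neq0$ here, is what later permits solving for $Q^{ij}\omega_i\omega_j$). With all six contractions zero, $S_{ij}=0$, which is (\ref{identity-quad}). As a cross-check one may instead verify the identity in the normal coordinates (\ref{nor}), where $X=0$ and $Y\neq0$ give, say, $f_{11}+f_{33}=0$ with $f_{11}\neq f_{22}$, so that $\omega_2=\omega_3=0$ and $\omega_1{}^2=f_3{}^2$, and both sides reduce to $f_3{}^4(f_{22}+f_{33})Q^{11}$.
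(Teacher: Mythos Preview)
Your proof is correct and complete. The paper's own argument proceeds differently: it first deduces from $X=0$ (via $E=0$) the linear identity $Jf_{jk}\omega^k+(f^{kl}f_kf_l)\omega_j-(f^{kl}f_k\omega_\ell)f_j=0$, and then obtains (\ref{identity-quad}) by expanding the dimension-dependent skew identity $f_{[i}\omega_jf_k{}^kQ_{l]}{}^l=0$ against $f^i\omega^j$; it also mentions, as an alternative, the direct check in the Riemannian normalisation. Your method---rewriting the assertion as the vanishing of a symmetric tensor $S_{ij}$ and testing it on the orthogonal frame $\{f^i,\omega^i,\zeta^i\}$---is essentially a frame-invariant version of that normalisation check, with the virtue that it isolates exactly which contraction carries the hypothesis: the single surviving term $S_{ij}\omega^i\zeta^j=J^2E$ makes transparent why $X=0$ is needed and why the other five contractions are purely algebraic consequences of (\ref{firstthree}) and the definition of~$Z$. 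The paper's skewing argument is terser and more in the spirit of its other identities, but yours is arguably more illuminating about the structure.
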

\begin{proof}  
Recall that $E\equiv\epsilon^{ijk}f_i\omega_jf_k{}^\ell\omega_\ell$ satisfies
$E^2 = - J^2X/2$, so that
\begin{equation} \label{identity-E}
X=0 \quad \Leftrightarrow \quad E=0 \quad \Leftrightarrow \quad 
Jf_{jk}\omega^k + (f^{kl}f_kf_l)\omega_j - (f^{kl}f_k\omega_{\ell})f_j=0\,,
\end{equation}
where the latter equality occurs since the LHS is both orthogonal and colinear
to the span of $f_j$ and $\omega_j$. We then apply this to the identity given
by transvecting $f_{[i}\omega_jf_k{}^kQ_{l]}{}^l = 0$ with $f^i\omega^j$. An
alternative proof is simply to check that the formula holds in the Riemannian
normalisation.
\end{proof}
 
Equation (\ref{4bis}) can now be written in the form
$Q^{ij}\omega_i\omega_j=0$, where
\begin{eqnarray*}
Q^{ij} &  = & -\epsilon^{ikl}f_k(Jf_l{}^{mj}f_m -
2f_{lm}f^m(f^{nj}f_n)) \\
 & & \qquad -\epsilon^{jkl}f_k(Jf_l{}^{mi}f_m -
2f_{lm}f^m(f^{ni}f_n)),
\end{eqnarray*}
which, by Lemma \ref{lem:identity-quad} can be written
as an invariant expression in the derivatives of $f$. However, it is more
direct and somewhat simpler to just write out (\ref{4bis}) in the Riemannian
normalisation.
 
From the proof of Lemma \ref{magic}, we see that $X=0$ implies that the product
$\omega_1\omega_2 = 0$. Thus (\ref{4bis}) becomes:
$$
f_3{}^3(\omega_1{}^2-\omega_2{}^2)(f_3f_{123} -2f_{13}f_{23}) = 0\,.
$$
But since $Y\neq 0$ ($f_{22}-f_{11} \neq 0$), $\omega_1{}^2-\omega_2{}^2 = J$
and this is equivalent to
$$
f_3{}^5(f_3f_{123} - 2f_{13}f_{23}) = 0\,,
$$
which we recognize to be a multiple of $V$ (which is given in normal 
co\"ordinates by 
$4J^2f_3(f_{22}- f_{11})(f_3f_{123}-2f_{13}f_{23})$). Thus (\ref{firstthree})
and (\ref{4bis}) correspond to the conformally invariant condition $V=0$.

We give an invariant treatment of (\ref{5bis}) as follows. Differentiate the
right-hand identity of (\ref{identity-E}):
\begin{eqnarray}
0 & \!\!=\!\! & \nabla_i (Jf_{jk}\omega^k + (f^{kl}f_kf_l)\omega_j 
                              - (f^{kl}f_k\omega_l)f_j) \nonumber \\
  & \!\!=\!\! & 2(f_{il}f^l)f_{jk}\omega^k + Jf_{ijk}\omega^k + 
  J f_j{}^k\omega_{ik} + f_{ikl}f^kf^l\omega_j + 2 f^{kl}f_{ik}f_l\omega_j 
  \nonumber \\
  & & \enskip{}+f^{kl}f_kf_l\omega_{ij} - f_{ikl}f^k\omega^lf_j 
  - f^{kl}f_{ik}\omega_l f_j - f^{kl}f_k\omega_{il}f_j 
  - f^{kl}f_k\omega_l f_{ij}\,.
\label{identity-E2}  
\end{eqnarray}
Note that for the moment we do not assume symmetry of $\omega_{ij}$.  

Recall the fundamental identities: $\omega^{ij}\omega_j = f^{ij}f_j$ and
$\omega^{ij}f_j = - f^{ij}\omega_j$. Transvect (\ref{identity-E2}) with
$\omega^j$ to obtain:
\begin{eqnarray*}
0 & = & -2f_{il}f^lf^{jk}f_jf_k + J f_{ijk}\omega^j\omega^k 
                            + J(f_j{}^k\omega^j)\omega_{ik} \\
  & & \qquad  + J f_{ikl}f^kf^l + 2Jf^{kl}f_{ik}f_l 
              + (f^{kl}f_kf_l)(f_{ij}f^j) - f^{kl}f_k\omega_lf_{ij}\omega^j\,.
\end{eqnarray*}
{From} (\ref{identity-E}), 
$Jf_j{}^k\omega^j = (f^{lm}f_l\omega_m)f^k - (f^{lm}f_lf_m)\omega^k$, so that
\begin{eqnarray*}
 J(f_j{}^k\omega^j)\omega_{ik} 
 & = & (f^{lm}f_l\omega_m)\omega_{ik}f^k - (f^{lm}f_lf_m)\omega_{ik}\omega^k \\
 & = & - (f^{lm}f_l\omega_m)f_{ik}\omega^k - (f^{lm}f_lf_m)f_{ik}f^k\,,
\end{eqnarray*}
which gives the identity:
$$
Jf_{ijk}\omega^j\omega^k + J f_{ijk}f^jf^k - 2(f^{kl}f_kf_l)f_{ij}f^j 
         - 2(f^{kl}f_k\omega_l)f_{ij}\omega^j + 2Jf^{kl}f_{ik}f_l\! =\! 0.
$$
{From} this, we deduce that (\ref{5bis}) has the equivalent form:
\begin{eqnarray}
 & & \hspace{-20pt}\epsilon^{ijk}f_i\omega_j
 \left( -Jf_k{}^{lm}f_lf_m - 2Jf_{kl}f^{lm}f_m + f_k{}^lf_l(3f^{mn}f_mf_n + Z)
 \right) = 0 \Leftrightarrow \nonumber \\
 & & \qquad  \textstyle\epsilon^{ijk}f_i\omega_j
 \left(-\sigma_k+J(J\nabla_k(\Delta f) - \frac{1}{2} \Delta f\nabla_kJ)\right)
 = 0\,, \label{5bisbis}
\end{eqnarray} 
where $\sigma_k$ is the conformally invariant $1$-form given by Theorem
\ref{usefullist} of Appendix \ref{conformalinvariants}. Even though
$J\nabla_k(\Delta f) - \frac{1}{2} \Delta f\nabla_kJ$ is not itself conformally
invariant, its component orthogonal to the span of $f_i$ and $\omega_i$ is, so
the left-hand side of (\ref{5bisbis}) is conformally invariant. Now square this
and use Lemma \ref{lem:identity-quad} to eliminate quadratic terms in
$\omega_i$. We obtain an identity involving only the derivatives of $f$, which
we identify in terms of conformal invariants as:
\begin{equation} \label{X=0-5th}\textstyle
\frac{25}{14}N+\frac{3}{5}G+\frac{3}{4}F + \frac{1}{21}T - \frac{17}{21}ZK 
- \frac{7}{9}ZA = 0\,.
\end{equation} 
\begin{cor} \label{cor:unique} Locally, a smooth function $f$ with $X=0$ admits a smooth conjugate
if and only if\, $V\equiv 0$ and {\rm (\ref{X=0-5th})} are satisfied.
\end{cor}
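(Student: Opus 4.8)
The plan is to assemble the two reductions already set up in the text and to invoke Corollary~\ref{allfiveequations-bis}. By that corollary, under $X=0$ (and $Y>0$, so that we are in the unique-conjugate-direction case) the function $f$ admits a local conjugate if and only if there is a smooth $\omega_i$ simultaneously solving (\ref{firstthree}), (\ref{4bis}) and~(\ref{5bis}). The decisive simplification is that, when $X=0$ and $Y>0$, (\ref{solutionsintherealcase}) tells us that (\ref{firstthree}) has exactly two solutions $\pm\omega_i$; since $Y\neq 0$ keeps these distinct, they vary smoothly and a local sign choice pins down a single smooth conjugate-direction field $\omega_i$. Thus the whole question collapses to whether this \emph{distinguished} $\omega_i$ additionally satisfies (\ref{4bis}) and (\ref{5bis}), and each of those two conditions will be shown to be equivalent to a condition on the derivatives of $f$ alone.

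First I would dispose of (\ref{4bis}). I would pass to the Riemannian normalisation (\ref{nor}): there $X=0$ forces $\omega_1\omega_2=0$, while $Y\neq 0$ (equivalently $f_{22}\neq f_{11}$) gives $\omega_1{}^2-\omega_2{}^2=J$, and substituting these into (\ref{4bis}) collapses it to a nonzero multiple of the invariant $V$. (Invariantly one instead writes (\ref{4bis}) as $Q^{ij}\omega_i\omega_j=0$ and removes the $\omega$-dependence with Lemma~\ref{lem:identity-quad}, but the normal-coordinate route is cleaner.) Hence, for the distinguished $\omega_i$, condition (\ref{4bis}) holds precisely when $V\equiv 0$.

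Next I would treat (\ref{5bis}). Using the identity (\ref{identity-E}) characterising $X=0$ and its derivative (\ref{identity-E2})---both valid for the conjugate-direction field without any symmetry hypothesis---together with the fundamental identities $\omega^{ij}\omega_j=f^{ij}f_j$ and $\omega^{ij}f_j=-f^{ij}\omega_j$ obtained by differentiating (\ref{firstthree}), I would rewrite the cubic-in-$\omega$ equation (\ref{5bis}) in the equivalent form (\ref{5bisbis}), in which the bracketed covector is free of $\omega$, so that the whole expression is merely linear in $\omega_i$. Because $\omega_i$ is real, this scalar equation vanishes if and only if its square does; I would therefore square it, multiply by $Z$ (nonzero since $Y=Z^2>0$), and apply Lemma~\ref{lem:identity-quad} to the resulting quadratic form $Q^{ij}\omega_i\omega_j$. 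This eliminates $\omega_i$ entirely and leaves a polynomial in $f_i,f_{ij},f_{ijk}$, which, after matching against the invariants of Appendix~\ref{conformalinvariants}, is exactly the left-hand side of (\ref{X=0-5th}). Thus (\ref{5bis}) holds for the distinguished $\omega_i$ if and only if (\ref{X=0-5th}) holds.

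Combining the two reductions, the existence of a smooth conjugate is equivalent to the single smooth conjugate-direction field satisfying both (\ref{4bis}) and (\ref{5bis}), that is, to $V\equiv 0$ together with (\ref{X=0-5th}), which is the assertion. The main obstacle is neither conceptual nor the squaring step, but the final bookkeeping: verifying in normal form that the $\omega$-eliminated square of (\ref{5bisbis}) coincides with the precise combination of appendix invariants displayed in (\ref{X=0-5th}). I expect that identification---rather than the logic of the equivalence---to carry essentially all of the computational weight.
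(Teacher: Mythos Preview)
Your proposal is correct and follows essentially the same route as the paper: invoke Corollary~\ref{allfiveequations-bis}, note that under $X=0$ and $Y>0$ the conjugate direction is unique up to sign and hence smooth, then reduce (\ref{4bis}) to $V=0$ in normal coordinates and reduce (\ref{5bis}) via (\ref{5bisbis}), squaring, and Lemma~\ref{lem:identity-quad} to the identity~(\ref{X=0-5th}). Your assessment that the only real work lies in the final identification of the $\omega$-eliminated expression with the displayed combination of invariants is also exactly how the paper treats it.
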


\subsection{Functions that admit infinitely many conjugates}
When $X$ and $Y$ both vanish, the function $f$ admits infinitely many
conjugate directions. The following gives a complete description. 
\begin{thm}
\label{thm:XYzero} Suppose $f$ is a smooth real-valued non-constant function
such that its invariants $X$ and $Y$ both vanish. Then, up to scale and
conformal transformation, $f$ is one of the following
\begin{equation}\label{XYzero}
x_1 \quad \log(x_1{}^2+x_2{}^2+x_3{}^2)\quad
\arctan\left(\frac{x_3}{x_2}\right)
\quad\frac{x_1}{x_1{}^2+x_2{}^2+x_3{}^2}.\end{equation}
\end{thm}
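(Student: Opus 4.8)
The plan is to work locally near a point where $\nabla f\neq 0$ (which exists since $f$ is non-constant) and to exploit the fact that the vanishing of $X$ and $Y$ is a conformally invariant condition, so that we are free to apply M\"obius transformations of the domain at will. The first step is to read off the geometric content of the hypotheses in the Riemannian normalisation~(\ref{nor}). Since $Y=f_3{}^4(f_{11}-f_{22})^2$ by~(\ref{thisisY}), the condition $Y=0$ forces $f_{11}=f_{22}$; that is, the Hessian of $f$ restricted to the orthogonal complement of $\nabla f$ is a multiple of the identity. Equivalently, every level set of $f$ is totally umbilic and hence, by the classical classification of umbilic surfaces in $\mathbb{R}^3$, an open piece of a round sphere or of a plane. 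The condition $X=0$ then says, via~(\ref{Xnormform}), that $f_{33}=-f_{11}$, which I would reinterpret invariantly as $\partial_\nu\log\|\nabla f\|=\kappa$, where $\nu=\nabla f/\|\nabla f\|$ and $\kappa$ is the (umbilic) mean curvature of the level set. This is the single first-order relation that the normal derivative of the conjugate speed must satisfy.

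The heart of the argument is to show that the one-parameter family of level spheres is a \emph{pencil} in the sense of M\"obius geometry. I would pass to the standard linear model in which each sphere or plane is represented by a vector in the Lorentzian space $\mathbb{R}^{4,1}$, so that the foliation becomes a curve $t\mapsto v(t)$ on the de Sitter quadric. The claim is that the relation $\partial_\nu\log\|\nabla f\|=\kappa$, together with the integrability built into $f_{ij}$ being a genuine Hessian (the total symmetry of $f_{ijk}$), forces $v''$ into the span of $v$ and $v'$, so that $v(t)$ sweeps out a fixed two-plane through the origin, which is precisely a pencil of spheres. Concretely, writing a spherical level set with centre $c(t)$ and radius $R(t)$ (derivatives in the function value $t=f$), the formula $\|\nabla f\|=1/(R'+\langle\nu,c'\rangle)$ holds; imposing $X=0$ and matching the resulting expression across all unit normals $\nu$ of a single sphere pins down $c$ and $R$ and shows the centres move along a line with $R$ evolving compatibly, which is the pencil condition. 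This pencil step is where I expect the real work to lie: one must rule out the \emph{a priori} possibility of an umbilic foliation whose spheres wander without forming a pencil, and it is exactly here that $X=0$, rather than merely $Y=0$, is indispensable.

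Finally, pencils of spheres fall into three M\"obius types according to the signature of the two-plane they span, equivalently the nature of their base locus: a common circle (elliptic), a single common point of tangency (parabolic), or no common real point (hyperbolic). Applying a suitable conformal transformation, I would bring the pencil into one of the standard positions—coaxial planes through the $x_1$-axis, parallel planes (equivalently, by inversion, spheres mutually tangent at a point), or concentric spheres—after which $f$ becomes a function $\phi$ of a single pencil parameter. The relation $f_{33}=-f_{11}$ then reduces to an elementary ordinary differential equation for $\phi$: one finds $\phi''=0$ in the planar cases, giving $x_1$ and, in the coaxial model, $\arctan(x_3/x_2)$, and $\phi''+\phi'/r=0$ for concentric spheres, giving $\log r$, that is $\tfrac12\log(x_1{}^2+x_2{}^2+x_3{}^2)$. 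Solving the parabolic case in its spherical model, or equivalently transporting the solution $x_1$ by an inversion, yields $x_1/(x_1{}^2+x_2{}^2+x_3{}^2)$. Up to scale and an additive constant this exhausts the possibilities and produces exactly the four functions in~(\ref{XYzero}); I would note that the two parabolic representatives $x_1$ and $x_1/r^2$ are conformally equivalent, the list in~(\ref{XYzero}) recording both the flat and the curved Riemannian normal forms.
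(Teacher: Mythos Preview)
Your approach is correct in spirit but takes a genuinely different route from the paper. The paper observes that by the identity~(\ref{aha}), namely $\phi^{ij}\phi_{ij}=\tfrac23Z^2-JX$, the hypotheses $X=Y=0$ (hence $Z=0$) force $\phi_{ij}=0$. Since $\phi_{ij}$ is the trace-free symmetric part of $J^2\nabla_i(J^{-1}f_j)$, this says precisely that $V_j\equiv J^{-1}f_j$ is a conformal Killing field. The paper then invokes the explicit classification of conformal Killing fields on~$\mathbb{R}^3$ up to conformal equivalence (the normal forms of Theorem~\ref{fivebyfivecase}), checks for each normal form whether $f_j=(V^kV_k)^{-1}V_j$ is closed (condition~(\ref{testingV})), and reads off the four functions directly.

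Your umbilic--pencil argument is a legitimate alternative, and the geometry you describe is accurate: $Y=0$ is exactly umbilicity of the level sets, and $X=0$ is the extra normal-derivative relation. However, the ``pencil step'' you rightly identify as the crux is precisely what the paper's conformal Killing observation dispatches in one line, since a one-parameter subgroup of the M\"obius group moves any sphere through a pencil, and conversely a foliation by a pencil of spheres is the orthogonal foliation of a conformal Killing flow. Thus your proposed computation in the $\mathbb{R}^{4,1}$ model, showing $v''\in\mathrm{span}\{v,v'\}$, is essentially a rederivation of $\phi_{ij}=0$; the paper's identity~(\ref{aha}) gives it for free. What your route buys is a transparent geometric explanation of why exactly these four families arise (the three signature types of pencil, with the parabolic case appearing twice via inversion); what the paper's route buys is that the hard step becomes a one-line consequence of an identity already established, and the case analysis is reduced to elementary linear algebra in~$\mathfrak{o}(4,1)$.
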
  
\begin{proof}
{From} (\ref{aha}) we deduce immediately that $\phi_{ij}=0$. But 
$$\phi_{ij}=\mbox{the symmetric trace-free part of }J^2\nabla_i[J^{-1}f_j]$$
whose vanishing is precisely saying that $J^{-1}f_j$ is a conformal Killing
field $V_j$ all of which can be written down explicitly.
Following~\cite{laplace},
$$\textstyle V_j=-s_j-m_{jk}x^k+\lambda x_j+x_jr_kx^k-\frac12r_jx_kx^k$$
where $s_j$ and $r_j$ are arbitrary vectors, $\lambda$~is a arbitrary constant, 
and $m_{ij}$ is an arbitrary skew matrix. We may invert
$$V_j=(f^kf_k)^{-1}f_j\iff f_j=(V^kV_k)^{-1}V_j$$
and inquire whether $f_j$ is closed. As a condition on~$V_j$, this reads
\begin{equation}\label{testingV}
V^kV_k\nabla_{[i}V_{j]}+2V^kV_{[i}\nabla_{j]}V_k=0,\end{equation}
the consequences of which are best viewed using a normal form for~$V_j$ such as
those provided by Theorem~\ref{fivebyfivecase} 
in~\S\ref{normalised_conf_Killing}. Specifically, matrices of the
form (\ref{firsttype}) provide conformal Killing fields of the form 
$$\lambda\Big(x_1\frac{\partial}{\partial x_1}
+x_2\frac{\partial}{\partial x_2}+x_3\frac{\partial}{\partial x_3}\Big)
+\mu\Big(x_2\frac{\partial}{\partial x_3}
-x_3\frac{\partial}{\partial x_2}\Big)$$
in accordance with the conventions of~\cite{laplace}. However, only when
$\mu=0$ or $\lambda=0$ is (\ref{testingV}) satisfied. When both vanish, we
obtain the linear functions which are equivalent under scaling and conformal
transformation to the first of~(\ref{XYzero}). Otherwise we obtain the second
two, respectively. Matrices from the next group provide nothing new but
matrices of the form (\ref{thirdtype}) correspond to the conformal Killing
fields $$\mu\Big(x_2\frac{\partial}{\partial x_3} -x_3\frac{\partial}{\partial
x_2}\Big) -(x_1{}^2-x_2{}^2-x_3{}^2)\frac{\partial}{\partial x_1}
-2x_1x_2\frac{\partial}{\partial x_2}-2x_1x_3\frac{\partial}{\partial x_3}$$
and (\ref{testingV}) is satisfied precisely when $\mu=0$. This gives rise to
the final possibility for $f$ in the list~(\ref{XYzero}).
\end{proof}

In fact, all of the functions with $X=Y=0$ admit, not only infinitely many
conjugate directions, but infinitely many conjugates. According to
Theorem~\ref{thm:XYzero}, it suffices to check this for the four 
cases~(\ref{XYzero}). The first three of these are discussed in detail elsewhere in
this article, specifically in \S\ref{lin_and_quad}, \S\ref{subsec:spherical},
and \S\ref{cylindrical} respectively. Finally, the functions 
$$f=\frac{x_1}{x_1{}^2+x_2{}^2+x_3{}^2}\qquad
g=\frac{x_2\cos\theta+x_3\sin\theta}{x_1{}^2+x_2{}^2+x_3{}^2}$$
form a conjugate pair for any~$\theta$.

\subsection{Functions of two variables that admit a conjugate in
${\mathbb{R}}^3$} 
Let $f = f(x_2, x_3)$ be a function of two variables only.
Then many conformal invariants simplify and in the case of a unique conjugate
direction, the equations have a simple interpretation. As a first observation,
it is easily checked that $X$ factors as a product:
$$
X = (\Delta f)( f^i\nabla_iJ)\,,
$$
so that we also have
$$\textstyle
Z = \frac12 f^i\nabla_iJ + J\Delta f \,,\quad 
Y = \left(\frac12 f^i\nabla_iJ - J\Delta f\right)^2\,.
$$
Furthermore, by its expression in the Riemannian normalisation, one sees that
$V\equiv 0$. In particular, the fourth condition for a conjugate: $P\equiv 0$
simplifies to
$$
ZS-2XR+2XY=0\,.
$$ 

Now suppose $X=0$ and $Y>0$. Then either $\Delta f = 0$, in which case 
$\omega = (0, -f_3, f_2)$ is, up to sign, the unique integrable conjugate
direction and we are in the case of a planar function with planar conjugate, or
$f^i\nabla_i J=0$ and $\Delta f \neq 0$. We can now exploit
Theorem~\ref{thm:integrability2}. Since (\ref{4bis}) is equivalent to 
$V\equiv 0$, this is vacuous. However, (\ref{5bis}) now comes into play. By
going into the Riemannian normalisation, one sees that the third order terms of
this equation vanish, and it becomes:
$$
(\epsilon^{ijk}f_i\omega_jf_k{}^lf_l)(f^{mn}f_mf_n+Z)=0\,.
$$
However, since $\Delta f \neq 0$, it is also the case that 
$f^{mn}f_mf_n+Z\neq 0$ and the equation becomes
$$
\epsilon^{ijk}f_i\omega_jf_k{}^lf_l=0\,.
$$
Let us write this out explicitly in co\"ordinates:
$$
-\omega_1f_2f_3{}^lf_l + \omega_1f_3f_2{}^lf_l = 0\,.
$$
But $\omega_1$ must be non-zero otherwise we are once more in the situation of
a planar function with a planar conjugate whence $\Delta f=0$,
contrary to our hypothesis. On combining this with the condition 
$f^i\nabla_iJ = 0$, we obtain the simultaneous equations in $f_2{}^kf_k$ and
$f_3{}^kf_k$:
$$
\left\{ \begin{array}{rcl}
f_3f_2{}^kf_k - f_2f_3{}^kf_k & = & 0 \\
f_2f_2{}^kf_k + f_3f_3{}^kf_k &  = & 0 
\end{array} \right.
$$
Since $f_2{}^2+f_3{}^2\neq 0$, these only admit the solution 
$f_2{}^kf_k = f_3{}^kf_k = 0$. But this implies that
$$
\nabla_l(f^kf_k) = 0 \quad\Leftrightarrow\quad ||\nabla f|| = {\rm constant}\,.
$$ 
The unique conjugate direction is thus given up to sign by 
$$\omega = (\sqrt{f_2{}^2+f_3{}^2},0,0).$$
Furthermore this case occurs precisely when $f$ satisfies the eikonal equation
$||\nabla f||^2 =$ constant. This should be compared with the example of a
function having spherical symmetry as discussed in \S\ref{subsec:spherical}
below, where now the conjugate must satisfy an eikonal equation, even though
there is no conformal transformation which sends concentric spheres to parallel
planes.

\section{Some examples} \label{sec:examples}
In general, it is not the case that a function will admit a conjugate, even
locally. For example, the function $f = x_1x_2x_3$ has the property that 
$X = 6f^2$. In particular $X$ cannot be $\leq 0$ on any open set, so that $f$
does not admit a conjugate on any open set.

Recall from the Introduction that the pair $(f,g)$ of a function and its
conjugate define a semi-conformal mapping into ${\mathbb{R}}^2$. In the
analytic category, such mappings arise (i) as the extension to the boundary at
infinity of a harmonic morphism on the associated heaven space of the domain,
see \cite{Ba-Pa}; (ii) from local CR hypersurfaces in the standard
Levi-indefinite hyperquadric in ${\mathbb{C}}P_3$, see \cite{Ba-Ea}. The latter
perspective leads to an explicit construction of semiconformal mappings from a
holomorphic function of two complex variables, which, in a first form was given
in \cite{Nu} then refined in \cite{Ba-Ea}. In what follows, we highlight some
particular cases of interest when a function $f$ admits a conjugate function.

\subsection{Linear and quadratic functions}\label{lin_and_quad}
Any linear function $f$ admits infinitely many conjugate functions, also
linear; indeed the two invariants $X$ and $Y$ both vanish identically. The only
quadratic function that admits a conjugate is, up to isometries and scaling,
$f = x_1{}^2 - x_2{}^2 - x_3{}^2$. Note that $f$ has an isolated critical point
at the origin, however its conjugate $g = x_1\sqrt{x_2{}^2 + x_3{}^2}$,
although of class $C^1$ at the origin, is not smooth there. It is unknown if a
pair of smooth conjugate functions $(f,g)$ can have an isolated critical point.
When they are harmonic and so determine a harmonic morphism, this is
impossible~\cite{thebook}.

\subsection{Cylindrical symmetry}\label{cylindrical}
Let $r^2=x_2{}^2+x_3{}^2$ and suppose that $f=f(r)$ so that its level sets are
concentric cylinders. Then by solving the equations (\ref{firstthree}), we
obtain the conjugate direction:--
$$
(\omega_1, \omega_2, \omega_3) = 
\left( \sqrt{f^{\prime \, 2} + rf^{\prime}f^{\prime\prime}} , 
x_3 \sqrt{\frac{-f^{\prime}f^{\prime\prime}}{r}}, 
- x_2 \sqrt{\frac{-f^{\prime}f^{\prime\prime}}{r}}\right)
$$
whose four-valuedness corresponds to taking different signs for the square
roots. Then for any branch, $d\omega = 0$ if and only if
$$
f^{\prime \, 2} + 2f^{\prime}f^{\prime\prime} = C,
$$
where $C$ is a constant which is $\geq 0$.   This has as first integral:--
\begin{equation} \label{first-int}
f^{\prime \, 2} = \frac{A}{r^2} + C,
\end{equation}
where $A\geq 0$ is a constant, and $\omega$ is now given by
$$
(\omega_1, \omega_2, \omega_3) = 
\left( \sqrt{C} , \frac{x_3\sqrt{A}}{r^2}, - \frac{x_2\sqrt{A}}{r^2}\right)
$$
Then $X = 2Cf^{\prime}f^{\prime\prime}/r = - 2AC/r^4$ is $\leq 0$ with the
inequality strict provided neither of $A$ nor $C$ vanish.

In fact we can integrate (\ref{first-int}) explicitly to obtain
$$
f = \left\{ \begin{array}{l} \sqrt{A} \ln\left\{ \frac{\sqrt{A + Cr^2} -
\sqrt{A}}{\sqrt{C}r}\right\} + \sqrt{A + Cr^2} \quad (C>0) \\
\sqrt{A}\ln r \quad (C = 0)
\end{array} \right.
$$
The conjugate function is given by 
$g = \sqrt{C} x_1 - \sqrt{A} \arctan (x_3/x_2)$, interpolating between the
two special case given by $A = 0$ $(f = \sqrt{C} r)$ and $C = 0$ $(f = \sqrt{A}
\ln r)$. In fact the mapping $(f,g)$ has fibres which are helices lying on the
cylinders $r =$ constant. When $C=0$ these helices become circles lying in
planes orthogonal to the $x_1$-axis and when $A=0$ they become lines parallel
to the $x_1$-axis. Geometrically, we can interpret the four-valuedness of
$\omega$ as corresponding to the choice of a right-hand screw or a left-hand
screw for the helices, together with a choice of orientation. In the special
cases we obtain just two equal and opposite directions.

\subsection{Spherical symmetry} \label{subsec:spherical} Let
$r^2=x_1{}^2+x_2{}^2+x_3{}^2$ and suppose that $f = f(r)$ depends on the radial
coordinate only. Then
$$
X = 
2f^{\prime}(r)^2 \left( f^{\prime\prime}(r) + \frac{f^{\prime}(r)}{r}\right)^2
$$
so that if $f$ is to admit a conjugate, the necessary condition $X\leq 0$
forces $f$ to be either constant or to satisfy the differential equation
$$
f^{\prime\prime}(r) + \frac{f^{\prime}(r)}{r} = 0\,.
$$
This has general solution $f = A\log r + B$, where $A$ and $B$ are arbitrary
constants. For convenience, we take $f = \log r$. Note that spherical symmetry
implies that $Y \equiv 0$ and so there are infinitely many conjugate
directions. In fact any conjugate function $g$ must satisfy 
$\partial g / \partial r = 0$ and $||\nabla g|| = 1/r$. Thus $g$ is determined
by its values on say the sphere $r = 1$, where it must satisfy the equation
$||\nabla g||=1$. Such an equation is know as an \emph{eikonal equation} and
solutions are determined by initial data on a hypersurface (i.e.~a curve) in
the sphere $S^2$. It should be noted that the sphere $S^2$ does not admit a
nowhere vanishing vector field and since we require $||\nabla g||= 1$, then $g$
cannot be globally defined on $S^2$. Thus even though the function $f$ defined
on ${{\mathbb R}^3}\setminus \{ 0\}$ admits infinitely many different conjugate
functions in a neighbourhood of any point of its domain, the domain of any of
these conjugate functions cannot coincide with that of~$f$.

\subsection{An Ansatz}
The following Ansatz provides a method of obtaining many pairs of conjugate
functions. Let $h(x,y)$ satisfy the partial differential equation:
\begin{equation} \label{ansatz}
\left(\frac{\partial h}{\partial x}\right)^2 + 
4y\left(\frac{\partial h}{\partial y}\right)^2 + 
4h\frac{\partial h}{\partial y} = 0\,.
\end{equation}
Then the functions 
$$
\left\{ \begin{array}{rcl} f & = & x_2h(x_1, {x_2}^2 + {x_3}^2) \\
g & = & x_3h(x_1, {x_2}^2 + {x_3}^2)\end{array} \right.
$$
are conjugate. For example, by taking $h = (x^2/y) + 1$, we obtain the pair of
conjugate functions of the Introduction. A straightforward calculation shows
that the only product solutions $h(x,y) = u(x)v(y)$ to (\ref{ansatz}), have the
form
$$
h = \frac{be^{cx}e^{\sqrt{1 - c^2y}}}{1 + \sqrt{1 - c^2y}},
$$
where $b$ and $c$ are constants. In fact, with reference to ~\S\ref{sec:X=0},
every solution obtained by this Ansatz satisfies $X\equiv 0$.

\section{Invariants of the conjugate}  \label{sec:invariantsconjugate}
For a function $f$ which admits a conjugate $g$, we can ask which of its
properties are shared by its conjugate. More specifically, can we express the
conformal invariants of $g$ in terms of those of $f$\,? For the invariant $X$,
this turns out to be simply done. In order to be clear on which invariants are
being considered, in this section we shall write $X(f)$ and $X(g)$ and so on,
for the invariants of the respective functions.

\begin{thm}\label{thm:Xconjugates}  
If $f$ admits a conjugate function $g$, then $X(f) = X(g)$.
\end{thm}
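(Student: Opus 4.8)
The plan is to exploit the symmetry of the conjugacy relation together with the formula $X = 2(f_3)^2(f_{11}+f_{33})(f_{22}+f_{33})$ from (\ref{Xnormform}). Since $f$ and $g$ are conjugate, the defining relations (\ref{definition}) are symmetric under interchange of $f$ and $g$, so $g$ also admits a conjugate (namely $f$, or indeed $-f$) and all the machinery developed above applies equally to $g$. The key structural fact is that at each point the pair $(f,g)$ furnishes, via $\nabla f, \nabla g$, a distinguished orthonormal-up-to-scale frame: by (\ref{definition}) the vectors $\nabla f$ and $\nabla g$ are orthogonal and of equal length, so $\{\nabla f, \nabla g, \nabla f \times \nabla g\}$ gives a natural (conformally adapted) basis in which to compute. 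The natural approach is therefore to choose the Riemannian normalisation (\ref{nor}) adapted to $f$, namely $f_1=f_2=f_{12}=0$, and express $X(g)$ in this same frame, then verify $X(g)=X(f)$ directly.

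The main steps I would carry out are as follows. First, I would record that $\omega_i \equiv g_i$ is itself a conjugate direction for $f$, so $\omega_i$ satisfies the full system (\ref{firstthree}); in the normalisation (\ref{nor}) this means $\omega_3=0$, $\omega_1{}^2+\omega_2{}^2 = f_3{}^2 = J$, and the third relation of (\ref{become}). Second, since $g$ admits $f$ as a conjugate, I can compute $X(g)$ using the same normal-form expression (\ref{Xnormform}) but now relative to a frame adapted to $g$. The cleanest route is to observe that $X$ has the coordinate-free expression appearing in (\ref{differentialinequality}), namely $X = 2f_i{}^jf_jf^{ik}f_k - f^if_if^{jk}f_{jk} + f^if_i(f^j{}_j)^2$, and to evaluate the corresponding quantity $X(g)$ by substituting the second-derivative data of $g$. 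Here I would use the differentiated relations (\ref{diff1}), i.e.\ $\omega^{ij}\omega_j = f^{ij}f_j$ and $\omega^{ij}f_j = -f^{ij}\omega_j$, which relate the Hessian of $g$ to that of $f$ along the distinguished directions; these pin down enough components of $g_{ij}$ in the adapted frame to compute $X(g)$ explicitly.

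The anticipated obstacle is that (\ref{diff1}) only controls the contractions of $\omega_{ij}$ against $f_j$ and $\omega_j$, not the full Hessian of $g$; in particular the component of $\nabla\nabla g$ in the $\nabla f \times \nabla g$ direction is not immediately fixed by conjugacy alone. The resolution I expect is that $X(g)$, like $X(f)$, depends only on those contractions that \emph{are} determined — precisely because $X$ is built from $g_i$ and the symmetric form $g_{ij}$ restricted to the plane orthogonal to $\nabla g$, and that plane is spanned by $\nabla f$ together with $\nabla f \times \nabla g$. Concretely, in the normalisation (\ref{nor}) one finds $X(g) = 2\omega^2(g_{aa}+g_{cc})(g_{bb}+g_{cc})$ for the $g$-adapted labelling, and the identities (\ref{diff1}) together with (\ref{solution}) let me rewrite every entry in terms of $f_{ij}$ and $f_3$; a short computation should then collapse this to $2(f_3)^2(f_{11}+f_{33})(f_{22}+f_{33}) = X(f)$. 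I would verify the collapse in the explicit normal form rather than invariantly, as that is the method used throughout this article.
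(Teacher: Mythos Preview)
Your overall strategy---compute $X(g)$ in coordinates adapted to $f$ using the differentiated constraints~(\ref{diff1})---is sound and close to the paper's own route. But the step you flag as the ``anticipated obstacle'' is resolved incorrectly, and as stated your argument does not close.

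You correctly observe that (\ref{diff1}) determines $g_{ij}f^j$ and $g_{ij}g^j$ but not the component $g_{ij}v^iv^j$ along $v_i=\epsilon_{ijk}f^jg^k$. Your proposed resolution, however, is that ``$X$ is built from $g_i$ and the symmetric form $g_{ij}$ restricted to the plane orthogonal to~$\nabla g$.'' This is false on its face---the normal form $X=2(f_3)^2(f_{11}+f_{33})(f_{22}+f_{33})$ visibly involves $f_{33}$, the component \emph{along}~$\nabla f$. More damagingly, even if it were true it would not help: the plane orthogonal to $\nabla g$ is spanned by $\nabla f$ and~$v$, so the restriction of $g_{ij}$ to that plane \emph{includes} $g_{ij}v^iv^j$, the very component you cannot control. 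Your resolution therefore does not dispose of the obstacle you identified.

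The genuine reason the undetermined component drops out is a cancellation. In $X(g)=2g_i{}^jg_jg^{ik}g_k-Jg^{jk}g_{jk}+J(g^j{}_j)^2$ the first term is already $2f_i{}^jf_jf^{ik}f_k$ by~(\ref{diff1}). The remaining piece $J[(g^j{}_j)^2-g^{jk}g_{jk}]$ depends on $g_{ij}v^iv^j$ only linearly, with coefficient proportional to $g^{ij}f_if_j+g^{ij}g_ig_j$; this vanishes because $g^{ij}f_if_j=-f^{ij}g_jf_i$ and $g^{ij}g_ig_j=f^{ij}f_jg_i$ sum to zero by the symmetry of~$f_{ij}$. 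The paper makes this cancellation explicit by decomposing $g_{ij}$ in the basis $\{f_i,g_i,v_i\}$, obtaining $g^{ij}g_{ij}=(g^k{}_k)^2+\mbox{(determined terms)}$, so that the unknown $(g^k{}_k)^2$ cancels against the $J(g^j{}_j)^2$ term in~$X(g)$. The residual expression is then evaluated in normal co\"ordinates exactly as you propose. Once you replace your incorrect justification with this cancellation, your argument goes through and is essentially the paper's.
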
 
\begin{proof}  In addition to (\ref{firstthree}), we have the identities:
$$
g^{ij}f_j+f^{ij}g_j = 0 \quad {\rm and} \quad g^{ij}g_j = f^{ij}f_j\,.
$$
Set $v_i = \epsilon_{ijk}f^jg^k$.  
Then we can decompose $g_{ij}$ in terms of a symmetric basis:
\begin{eqnarray*}
g_{ij} & = & \frac{1}{J^2}(g^{kl}f_kf_l)f_if_j 
+ \frac{1}{J^2}(g^{kl}g_kg_l)g_ig_j + \frac{1}{J^4}(g^{kl}v_kv_l)v_iv_j \\
 & & + \frac{2}{J^2}(g^{kl}f_kg_l)f_{(i}g_{j)} 
+ \frac{2}{J^3}(g^{kl}f_kv_l)f_{(i}v_{j)} 
+ \frac{2}{J^3}(g^{kl}g_kv_l)g_{(i}v_{j)} \\
 & = & \frac{1}{J}(g^k{}_k)(J\delta_{ij} - f_if_j-g_ig_j) 
 - \frac{2}{J}f_{k(i}g^kf_{j)} + \frac{2}{J} f_{k(i}f^kg_{j)} \\
 & & + \frac{1}{J^2}(f^{kl}f_kg_l)(f_if_j-g_ig_j) 
 - \frac{2}{J^2}(f^{kl}f_kf_l)f_{(i}g_{j)}\,.
\end{eqnarray*}
As a first application of this formula, we deduce the identity:
\begin{equation} \label{identity-conjugates}
f^{ij}g_{ij} - (f^i{}_i)(g^j{}_j) = 0\,.
\end{equation}
Furthermore
$$
g^{ij}g_{ij} =(g^k{}_k)^2 + \frac{2}{J}f_{kj}g^kf^{lj}g_l 
+ \frac{2}{J} f_{kj}f^kf^{lj}f_l - \frac{2}{J^2}(f^{kl}f_kg_l)^2 
- \frac{2}{J^2}(f^{kl}f_kf_l)^2\,,
$$
which implies that
$$
X(g) = - 2f_{kj}g^kf^{lj}g_l + \frac{2}{J}(f^{kl}f_kg_l)^2 
+ \frac{2}{J}(f^{kl}f_kf_l)^2\,.
$$
In normal co\"ordinates, on applying (\ref{solution}), the RHS equals 
$$
-2g_1{}^2f_{11}^2 - 2g_2{}^2f_{22}{}^2 + 2f_3{}^2f_{33}{}^2 = 
2f_3{}^2(f_{11} + f_{33})(f_{22}+ f_{33})\,,
$$ which is precisely $X(f)$.
\end{proof}

\begin{cor} If $f$ admits a conjugate function $g$, then for any 
$\epsilon \in {\mathbb{R}}$, the function $f+\epsilon g$ admits $g-\epsilon f$
as a conjugate and $X(f+\epsilon g) = (1+\epsilon^2)^2X(f)$.
\end{cor}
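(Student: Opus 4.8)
The plan is to handle the two assertions separately, treating the conjugacy claim by a direct computation and the scaling of $X$ by a rotation-and-homogeneity argument that sidesteps any lengthy expansion in~$\epsilon$.

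First I would verify that $(f+\epsilon g,\,g-\epsilon f)$ is a conjugate pair. Writing $F=f+\epsilon g$ and $G=g-\epsilon f$ and using the hypotheses $\|\nabla f\|=\|\nabla g\|$ and $\langle\nabla f,\nabla g\rangle=0$ from~(\ref{definition}), a two-line expansion gives $\|\nabla F\|^2=\|\nabla G\|^2=(1+\epsilon^2)\|\nabla f\|^2$ and $\langle\nabla F,\nabla G\rangle=0$, so~(\ref{definition}) holds for $(F,G)$ and $g-\epsilon f$ is a conjugate of $f+\epsilon g$.

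For the invariant I would exploit two structural features of $X$. The first is homogeneity: since every term of $X$ is the product of two first derivatives and two second derivatives of~$f$, one has $X(\lambda f)=\lambda^4X(f)$. The second is a rotation-invariance. Introduce the family $\tilde f_\theta=\cos\theta\,f+\sin\theta\,g$ with companion $\tilde g_\theta=-\sin\theta\,f+\cos\theta\,g$; the computation of the previous paragraph shows that $(\tilde f_\theta,\tilde g_\theta)$ is a conjugate pair for every~$\theta$. Since $\frac{d}{d\theta}\tilde f_\theta=\tilde g_\theta$, the claim that $X(\tilde f_\theta)$ is independent of~$\theta$ reduces to a lemma: the directional derivative of $X$ in a conjugate direction vanishes, i.e.\ $DX|_h(k)=0$ whenever $(h,k)$ is a conjugate pair.

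I expect this lemma to be the main obstacle, though it is short. Differentiating $X$ and using that it is quadratic in the first and quadratic in the second derivatives, the expression $DX|_f(g)$ splits into three groups. The terms carrying the factor $f^ig_i$ drop out by the first equation of~(\ref{firstthree}); the terms involving $f^{jk}g_{jk}$ and $(\Delta f)(\Delta g)$ cancel by identity~(\ref{identity-conjugates}); and the surviving terms assemble into a multiple of $f^{ik}f_k\bigl(f_{ij}g^j+g_{ij}f^j\bigr)$, which vanishes by the identity $f^{ij}g_j+g^{ij}f_j=0$ recorded at the start of the proof of Theorem~\ref{thm:Xconjugates}. Hence $X(\tilde f_\theta)=X(\tilde f_0)=X(f)$ for all~$\theta$. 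Finally, writing $\epsilon=\tan\theta$ gives $f+\epsilon g=\sqrt{1+\epsilon^2}\,\tilde f_\theta$, so homogeneity yields $X(f+\epsilon g)=(1+\epsilon^2)^2X(\tilde f_\theta)=(1+\epsilon^2)^2X(f)$, as required; in particular the odd powers of $\epsilon$ vanish and the coefficient of $\epsilon^2$ is forced to be $2X(f)$ without any direct computation.
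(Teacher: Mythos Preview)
Your argument is correct, and it takes a genuinely different route from the paper's.  The paper proceeds by brute force: it expands $X(f+\epsilon g)$ as a quartic polynomial in~$\epsilon$, writes out all five coefficients, checks that the odd ones vanish using~(\ref{firstthree}) and~(\ref{identity-conjugates}), and then invokes Theorem~\ref{thm:Xconjugates} (that $X(f)=X(g)$) to identify the $\epsilon^2$ and $\epsilon^4$ coefficients and obtain the factorisation $(1+\epsilon^2)^2X(f)$.  You instead linearise once, showing $DX|_h(k)=0$ for any conjugate pair $(h,k)$, and then integrate this along the one-parameter rotation family $\tilde f_\theta=\cos\theta\,f+\sin\theta\,g$ to conclude that $X(\tilde f_\theta)$ is constant; the scaling by $(1+\epsilon^2)^2$ then comes for free from degree-4 homogeneity.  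The practical gain is that you never have to compute the~$\epsilon^2$ coefficient at all, and in fact your argument \emph{recovers} Theorem~\ref{thm:Xconjugates} (take $\theta=\pi/2$) rather than consuming it---the only external input you need is the single identity~(\ref{identity-conjugates}).  The paper's approach, by contrast, has the minor advantage of being purely algebraic (no calculus in~$\theta$), and makes the structure of each coefficient explicit.
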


\begin{proof}
That $f+\epsilon g$ and $g-\epsilon f$ are conjugates, is easily checked.  Then
\begin{eqnarray*}
X(f+\epsilon g) & \!\!=\!\! & X(f) \\
& & + \epsilon \{ 4(f_i{}^jg_j+4g_i{}^jf_j)f^{ik}f_k - 2J(g^{ij}f_{ij} 
                                                  - (f^i{}_i)(g^j{}_j))\} \\
& & + \epsilon^2\{ 4f_i{}^jf_jg^{ik}g_k+4f_i{}^jg_jg^{ik}f_k
                               +2g_i{}^jf_jg^{ik}f_k+2f_i{}^jg_jf^{ik}g_k \\
& & \qquad - J[f^{ij}f_{ij} + g^{ij}g_{ij} - (f^i{}_i)^2 - (g^j{}_j)^2]\} \\
& & + \epsilon^3\{ 4(f_i{}^jg_j+4g_i{}^jf_j)g^{ik}g_k 
                                 - 2J(g^{ij}f_{ij} - (f^i{}_i)(g^j{}_j))\}\\
& &  \qquad + \epsilon^4X(g)\,.
\end{eqnarray*}
But the coefficients of the odd powers of $\epsilon$ vanish on account of
(\ref{firstthree}) and (\ref{identity-conjugates}), so from Theorem
\ref{thm:Xconjugates}, we obtain
$$
X(f+\epsilon g) = X(f) + \epsilon^2(X(f)+X(g)) + \epsilon^4X(g) =
(1+\epsilon^2)^2X(f)\,.
$$
\end{proof}

Note that if we view the pair $(f,g)$ of a function and its conjugate as
defining a semiconformal map into ${\mathbb{R}}^2$, then the replacement of
$(f,g)$ by $(f+\epsilon g, g-\epsilon f)$ amounts to multiplication of $f+ig$
by $1-i\epsilon$ when we identify ${\mathbb{R}}^2$ with the complex plane
${\mathbb{C}}$. Indeed, semiconformality is preserved under conformal
transformations of both the domain and codomain.

To calculate the invariant $Z(g)$ in terms of invariants of $f$ turns out to be
more challenging. In fact $Z(g)$ depends on the choice of conjugate direction,
so that, in the generic case, the appropriate quantity to consider is the
product $\sqrt{Y}Z(\omega )Z(\eta )$. This can be calculated by the methods of
\S\ref{sec:elim-om} to produce an expression involving third order derivative
of $f$ which we don't attempt to write down. On the other hand, information
about $Z(g)$ can be obtained as in the above Corollary. 
\begin{lemma}\label{lem:Zfg} If $f$ admits a conjugate $g$, then we have
$$
Z(f+\epsilon g) = (1+\epsilon^2)(Z(f) + \epsilon Z(g))\,.
$$
Furthermore, 
$$
Z(g) = \frac{d}{d\epsilon}Z(f+\epsilon g)\vert_{\epsilon =0}\,;
$$
that is, $Z(g)={\mathcal Z}_f(g)$ where ${\mathcal Z}_f$ is the linearisation
of the operator $Z$ at $f$.
\end{lemma}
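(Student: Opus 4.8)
The plan is to exploit that $Z$ is a differential operator homogeneous of degree three in $f$. Replacing each derivative $f_i,f_{ij}$ appearing in $Z=f^{ij}f_if_j+f^if_if^j{}_j$ (see (\ref{3Lap})) by $f_i+\epsilon g_i$ and $f_{ij}+\epsilon g_{ij}$ and expanding produces a cubic polynomial in $\epsilon$,
$$Z(f+\epsilon g)=Z(f)+\epsilon\,\mathcal{Z}_f(g)+\epsilon^2 P_2+\epsilon^3 Z(g),$$
in which the $\epsilon$-linear coefficient is by definition the linearisation $\mathcal{Z}_f(g)$ and the cubic coefficient is $Z(g)$ by homogeneity. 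The two identities to establish are therefore $\mathcal{Z}_f(g)=Z(g)$ and $P_2=Z(f)$; granting these, the claimed factorisation $(1+\epsilon^2)(Z(f)+\epsilon Z(g))$ and its ``Furthermore'' consequence both follow at once.

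First I would assemble the conjugacy identities. Taking $\omega_i=g_i$ in (\ref{firstthree}) gives orthogonality $f^ig_i=0$, equal lengths $g^ig_i=f^if_i$, and the quadratic relation $f^{ij}f_if_j+f^{ij}g_ig_j=0$; differentiating as in (\ref{diff1}) supplies the companions $f^{ij}g_j=-g^{ij}f_j$ and $g^{ij}g_j=f^{ij}f_j$. The one genuinely useful observation is that conjugacy is \emph{symmetric}: the defining conditions (\ref{definition}) are unchanged under interchange of $f$ and $g$, so $g$ admits $f$ as a conjugate and $\nabla f$ is a conjugate direction for $g$. Applying the quadratic relation of (\ref{firstthree}) to $g$ with conjugate direction $\nabla f$ therefore yields the extra identity $g^{ij}g_ig_j+g^{ij}f_if_j=0$.

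Next I would compute the two coefficients by bilinear bookkeeping. For the $\epsilon$-linear term, the contribution from $f^{ij}f_if_j$ is $g^{ij}f_if_j+2f^{ij}f_ig_j$ and from $f^if_if^j{}_j$ is $(f^if_i)\Delta g+2(f^ig_i)\Delta f$; using $f^ig_i=0$ and rewriting $2f^{ij}f_ig_j=-2g^{ij}f_if_j$ via $f^{ij}g_j=-g^{ij}f_j$, this collapses to $-g^{ij}f_if_j+(f^if_i)\Delta g$. Replacing $f^if_i$ by $g^ig_i$ and invoking the symmetric relation $-g^{ij}f_if_j=g^{ij}g_ig_j$ gives exactly $Z(g)$. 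The quadratic term is handled identically: using $g^{ij}g_j=f^{ij}f_j$ to turn $2g^{ij}f_ig_j$ into $2f^{ij}f_if_j$, using $f^{ij}g_ig_j=-f^{ij}f_if_j$, and using $g^ig_i=f^if_i$ together with $f^ig_i=0$, one finds $P_2=f^{ij}f_if_j+(f^if_i)\Delta f=Z(f)$.

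The main obstacle is conceptual rather than computational: recognising that (\ref{firstthree}) must be used not only for the pair $(f,\nabla g)$ but also for $(g,\nabla f)$, which is legitimate precisely because (\ref{definition}) is symmetric under $f\leftrightarrow g$. Once the two coefficients are pinned down, one reads off
$$Z(f+\epsilon g)=Z(f)+\epsilon Z(g)+\epsilon^2 Z(f)+\epsilon^3 Z(g)=(1+\epsilon^2)(Z(f)+\epsilon Z(g)),$$
and the final assertion is immediate, since $\mathcal{Z}_f(g)$ is by definition the coefficient of $\epsilon$, which we have shown to equal $Z(g)$.
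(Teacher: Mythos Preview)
Your proof is correct and follows essentially the same route as the paper: expand $Z(f+\epsilon g)$ as a cubic in $\epsilon$ and reduce the linear and quadratic coefficients to $Z(g)$ and $Z(f)$ using the conjugacy identities. The only cosmetic difference is in how you identify the $\epsilon$-coefficient with $Z(g)$: you invoke the symmetry of (\ref{definition}) to obtain $g^{ij}g_ig_j+g^{ij}f_if_j=0$, whereas the paper instead rewrites $Z(g)=f^{ij}f_ig_j+J\,g^j{}_j$ via $g^{ij}g_j=f^{ij}f_j$; both are one-line consequences of the same set of identities.
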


In fact the latter part of the lemma is easily deduced directly from
(\ref{firstthree}):
$$
Z(g) = g^{ij}g_ig_j+(g^ig_i)(g^j{}_j) = f^{ij}f_ig_j + (f^if_i)(g^j{}_j)\,,
$$
where, for a given $f$ with $\nabla f$ non-zero, the RHS is now a linear
operator on $g$, which, since the principal term is the Laplacian, is elliptic.

\begin{proof}  We have:
\begin{eqnarray*}
Z(f+\epsilon g) & = & Z(f) + \epsilon (g^{ij}f_if_j+2f^{ij}g_if_j+J\Delta g) \\
 & & \quad \epsilon^2(2g^{ij}g_if_j+f^{ij}g_ig_j+J\Delta f) + \epsilon^3Z(g) \\
 & = & Z(f) + \epsilon (f^{ij}f_ig_j+J\Delta g) + \epsilon^2(f^{ij}f_if_j
                                                +J\Delta f) + \epsilon^3Z(g) \\
 & = & (1+\epsilon^2)(Z(f) + \epsilon Z(g))\,.
\end{eqnarray*}
The last part of the lemma now follows from this formula, or as indicated
above, directly from (\ref{firstthree}).
\end{proof}

An interesting problem is to characterize those conjugate pairs that are
$3$-harmonic, i.e.~conjugate pairs $(f,g)$ satisfying $Z(f)=Z(g)=0$, for
then the mapping $(f,g)$ determines a $3$-harmonic morphism~\cite{Lo}. If both
$X$ and $Y$ vanish, then so does $Z$ and we have a complete description in this
case given by Theorem~\ref{thm:XYzero}. Up to conformal transformation, the
different conjugate $3$-harmonic pairs are given by
$$
\begin{array}{l}
(x_1, x_2), \quad 
\big(\frac12\log (x_1{}^2+x_2{}^2+x_3{}^2), \arctan(x_3/x_2)\big),\\
\qquad \quad \displaystyle 
\big(\frac{x_1}{x_1{}^2+x_2{}^2+x_3{}^2}, 
     \frac{x_2}{x_1{}^2+x_2{}^2+x_3{}^2}\big)\,.
\end{array}
$$
More generally, by the homogeneity of $Z(f)$ in $f$, the function $f$ is
$3$-harmonic if and only if it satisfies the linearisation of $Z$ at $f$:
${\mathcal Z}_f(f)=0$, so that by Lemma \ref{lem:Zfg}, ${\mathcal
Z}_f(f)={\mathcal Z}_f(g)=0$ is a necessary and sufficient condition for a
conjugate pair $(f,g)$ to be $3$-harmonic.

\appendix
\section{Conformal invariants}\label{conformalinvariants}
Suppose $f$ is a smooth function defined on an open subset
$U\subseteq{\mathbb{R}}^3$. As usual, we denote the partial derivatives of $f$
by subscripts 
$$f_i=\frac{\partial f}{\partial x^i},\quad f_{ij}=
\frac{\partial^2f}{\partial x^i\partial x^j},\quad f_{ijk}=
\frac{\partial^3f}{\partial x^i\partial x^j\partial x^k},\quad \ldots.$$
Equivalently, we may regard these quantities as tensors obtained by repeated
application of the flat connection $\nabla_i$ corresponding to the flat
metric~$\delta_{ij}$. Suppose $\Omega$ is a smooth non-vanishing function defined on
$U$ such that $\hat \delta_{ij}\equiv\Omega^2\delta_{ij}$ is also flat. If we let
$\Upsilon_i=\nabla_i\log\Omega$, then it is well-known~\cite{thebook} that
these functions are precisely the solutions of
$$\nabla_i\Upsilon_j=\Upsilon_i\Upsilon_j
-\textstyle\frac{1}{2}\delta_{ij}\Upsilon^k\Upsilon_k$$
and that all solutions are obtained by the conformal transformations of the 
round sphere $S^3$ viewed as flat-to-flat conformal rescalings via 
stereographic projection. Let $\hat\nabla_i$ denote the metric connection 
for $\hat \delta_{ij}$ and write 
$$\hat f=f,\quad \hat f_i=\hat\nabla_if,\quad 
\hat f_{ij}=\hat\nabla_i\hat\nabla_jf,\quad
\hat f_{ijk}=\hat\nabla_i\hat\nabla_j\hat\nabla_kf,\quad \ldots.$$ 
A conformal differential invariant of $f$ of weight $w$ is a polynomial
$$I=I(\delta^{ij},f,f_i,f_{ij},f_{ijk},\ldots)$$
in the derivatives of $f$ and the inverse metric $\delta^{ij}$ with the property
that it is invariant under arbitrary co\"ordinate transformation and 
\begin{equation}\label{defofinvariant}
I(\hat \delta^{ij},\hat f,\hat f_i,\hat f_{ij},\hat f_{ijk},\ldots)
=\Omega^wI(\delta^{ij},f,f_i,f_{ij},f_{ijk},\ldots)\end{equation} 
for all flat-to-flat conformal rescalings~$\Omega$. As detailed in~\cite{eg},
this notion of invariance is the same as requiring equivariance under the
action of ${\mathrm{SO}}(4,1)$ on the $3$-sphere, with
${\mathbb{R}}^3\hookrightarrow S^3$ by stereographic projection. It is
straightforward to write down explicit formulae for the effect of flat-to-flat 
rescalings on derivatives
\begin{equation}\label{jetchange}\begin{array}{rcl}\hat f_i&=&f_i\\
\hat f_{ij}&=&f_{ij}-2\Upsilon_{(i}f_{j)}+\delta_{ij}\Upsilon^kf_k\\
\hat f_{ijk}&=&f_{ijk}-6\Upsilon_{(i}f_{jk)}+3\delta_{(ij}\Upsilon^pf_{k)p}\\
&\vdots&\phantom{f_{ijk}}+6\Upsilon_{(i}\Upsilon_jf_{k)}
-3\delta_{(ij}\Upsilon_{k)}\Upsilon^pf_p-\frac32\Upsilon^p\Upsilon_p\delta_{(ij}f_{k)}
\end{array}\end{equation}
with a view to verifying (\ref{defofinvariant}) by direct calculation. It is
difficult to find conformal invariants from this direct point of view.
Certainly $J\equiv \delta^{ij}f_if_j=f^if_i$ is an invariant of weight $-2$. Perhaps
the simplest second order invariant is
$$Z\equiv f^{ij}f_if_j+Jf^j{}_j.$$
It has weight $-4$ but it is usual to omit the powers of $\Omega$ in verifying 
invariance (this is easily made precise by regarding the 
invariant as taking its values in an appropriate line-bundle). 
Specifically, as a linear combination of complete contractions it is 
manifestly invariant under co\"ordinate transformation and
$$\begin{array}{rcl}\hat f^{ij}\hat f_i\hat f_j&=&
f^{ij}f_if_j-\Upsilon^if_if^jf_j=f^{ij}f_if_j-J\Upsilon^kf_k\\
\hat J\hat f^j{}_j&=&Jf^j{}_j+J\Upsilon^kf_k
\end{array}$$
whence
$$\hat f^{ij}\hat f_i\hat f_j+\hat J\hat f^j{}_j=f^{ij}f_if_j+Jf^j{}_j,$$
as required. The familiar quantity 
\begin{equation}\label{defofX}
X=2f_i{}^jf_jf^{ik}f_k-f^if_if^{jk}f_{jk}+f^if_i(f^j{}_j)^2\end{equation}
is a conformal invariant of weight~$-6$. That it is a polynomial in the
derivatives of $f$ invariant under co\"ordinate change is already manifest. 
Its conformal invariance, however, is most easily seen from the identity of 
Lemma~\ref{magic}:--
$$JX+12T_{ijk}T^{ijk}=0,\quad\mbox{where }T_{ijk}=
f_{[i}\omega_jf_{k]l}\omega^l.$$ 
Here, recall that $\omega_j$ is any solution of the
equations~(\ref{firstthree}). We make take $\hat\omega_i=\omega_i$ to obtain a
solution of the conformally transformed equations resulting
from~(\ref{jetchange}). Then 
$$\hat f_{kl}\hat\omega^l
=f_{kl}\omega^l-\Upsilon^l\omega_lf_k+\Upsilon^lf_l\omega_k$$
and so $T_{ijk}$ is a conformally invariant tensor (of weight~$-2$). Notice,
however, that $T_{ijk}$ is not an expression solely in $f$ and its
derivatives but also involves~$\omega_j$. It may also be imaginary-valued. It
is only in the combination $T_{ijk}T^{ijk}$ that $\omega_j$ can be
eliminated using the relations~(\ref{firstthree}).
Of course, it is also possible to check the conformal invariance of $X$
directly from the expression~(\ref{defofX}).

In the remainder of this section we construct an extensive menagerie of
conformal differential invariants of~$f$. It is possible, in
principle~\cite{eg}, to list all such invariants. In practise, however, it is
easier to construct invariants by a number of tricks (see~\cite{s}). 
Apart from the particular invariant $V$ constructed below, these will
turn out to be sufficient for our purposes. The new connection $\hat\nabla_i$
is related to $\nabla_i$ by
$$\hat\nabla_i\phi_j=\nabla_i\phi_j-\Upsilon_i\phi_j-\Upsilon_j\phi_i
+\delta_{ij}\Upsilon^k\phi_k$$
when acting on an arbitrary $1$-form~$\phi_j$. It follows that 
$$\nabla^i[\Omega^{-1}\phi_i]=\Omega^{-1}\nabla^i\phi_i,$$
which we will more conveniently express by saying if $\phi_i$ has conformal
weight~$-1$, then $\phi_i\mapsto\nabla^i\phi_i$ is conformally invariant. 
Similarly, 
$$\textstyle\phi_j\mapsto\nabla_{(i}\phi_{j)}-\frac13\nabla^k\phi_k\delta_{ij}$$
is conformally invariant when $\phi_j$ has weight~$2$. Where $J$ does not
vanish we may consider the smooth $1$-form $J^{1/2}f_i$. It has weight $-1$ 
whence
$$J^{1/2}\nabla^j[J^{1/2}f_j]$$
is conformally invariant (of weight~$-4$). As written here, this is not a
polynomial but if we expand it we obtain
$$\textstyle \frac12[\nabla^jJ]f_j+J\nabla^jf_j=f^{ij}f_if_j+f^if_if^j{}_j\,,$$
which is a perfectly good polynomial. It follows that this is an invariant 
whether or not $J$ vanishes. It is our previous invariant~$Z$. Another 
viewpoint on this construction is that $f^j\nabla_jJ+2J\nabla_jf^j$ is a 
conformally invariant bilinear differential pairing between $f_i$ and~$J$. 
There are many such pairings on ${\mathbb{R}}^3$ as follows.
\begin{lemma}\label{firstpairings}
The following pairings are conformally invariant.
$$\begin{array}{ccl}
\underbrace{\psi}_{\mbox{\scriptsize\rm weight $v$}}\times
\underbrace{\phi}_{\mbox{\scriptsize\rm weight $w$}}&\mapsto&
\underbrace{v\psi\nabla_i\phi-w\phi\nabla_i\psi}_{
\mbox{\scriptsize\rm weight $v+w$}}\\
\rule{0pt}{16pt}\underbrace{\psi_i}_{\mbox{\scriptsize\rm weight $v$}}\times
\underbrace{\phi}_{\mbox{\scriptsize\rm weight $w$}}&\mapsto&
\underbrace{(v+1)\psi^i\nabla_i\phi-w\phi\nabla_i\psi^i}_{
\mbox{\scriptsize\rm scalar of weight $v+w-2$}}\\
\rule{0pt}{16pt}\mbox{ditto}&\mapsto&
\underbrace{v\psi_{[i}\nabla_{j]}\phi+w\phi\nabla_{[i}\psi_{j]}}_{
\mbox{\scriptsize\rm skew of weight $v+w$}}\\
\rule{0pt}{16pt}\mbox{ditto}&\mapsto&
(v-2)[\psi_{(i}\nabla_{j)}\phi-\frac13\delta_{ij}\psi^k\nabla_k\phi]\\
&&\underbrace{\rule{0pt}{12pt}\textstyle\qquad{}
-w\phi[\nabla_{(i}\psi_{j)}-\frac13\delta_{ij}\nabla^k\psi_k]}_{
\mbox{\scriptsize\rm symmetric trace-free of weight $v+w$}}\\
\end{array}$$
\end{lemma}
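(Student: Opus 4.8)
The plan is to verify each pairing directly, by computing how the stated combination behaves under a flat-to-flat rescaling $\hat g_{ij}=\Omega^2 g_{ij}$, and checking that it acquires exactly the asserted power of $\Omega$ while every $\Upsilon_i$-dependent ``anomaly'' term cancels. The first step is to record two master transformation laws. For a scalar $\phi$ of weight $w$ one has $\hat\phi=\Omega^w\phi$, hence $\nabla_i\hat\phi=\Omega^w(\nabla_i\phi+w\Upsilon_i\phi)$. For a $1$-form $\psi_j$ of weight $v$, combining $\hat\psi_j=\Omega^v\psi_j$ with the connection-change formula displayed just before the lemma gives, after a short computation, $\hat\nabla_i\hat\psi_j=\Omega^v\bigl(\nabla_i\psi_j+(v-1)\Upsilon_i\psi_j-\Upsilon_j\psi_i+g_{ij}\Upsilon^k\psi_k\bigr)$. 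These two formulas contain everything needed; the coefficients $(v-1)$, $-1$, $+1$ on the three $\Upsilon$-terms are what drive all the later cancellations.

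Next I would feed these into the four constructions and symmetrise, skew, or trace as appropriate. The scalar$\times$scalar pairing is the warm-up: the two anomalies are $+vw\,\phi\psi\Upsilon_i$ and $-wv\,\phi\psi\Upsilon_i$, which cancel, leaving $\Omega^{v+w}$ times the original. For the divergence pairing I would raise and contract the master $1$-form law to obtain $\hat\nabla_i\hat\psi^i=\Omega^{v-2}\bigl(\nabla_i\psi^i+(v+1)\Upsilon^k\psi_k\bigr)$, in which the coefficient $v+1=v+g^{ij}g_{ij}-2$ records the dimension three; the coefficient $(v+1)$ in the pairing is chosen precisely so that the two anomalies annihilate, yielding weight $v+w-2$. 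For the skew pairing the $g_{ij}$-trace term drops out on skewing, and the identity $\Upsilon_{[i}\psi_{j]}=-\psi_{[i}\Upsilon_{j]}$ forces the cancellation in any dimension. For the symmetric trace-free pairing I would take trace-free symmetric parts, an operation that is itself conformally invariant since $\hat g^{ij}S_{ij}=\Omega^{-2}g^{ij}S_{ij}$ so that trace-freeness is preserved; here the factor $\tfrac13$ is $1/(g^{ij}g_{ij})$, matching the traces then pins down the coefficient $(v-2)$, and the residual anomaly vanishes because $\psi_{(i}\Upsilon_{j)}=\Upsilon_{(i}\psi_{j)}$.

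The computations themselves are routine, so the main obstacle is organisational rather than conceptual, and lies in two places. The first is deriving the weighted $1$-form law correctly, since both the density weight and the connection correction contribute, and an error in any of the three $\Upsilon$-coefficients would spoil every subsequent cancellation. The second is tracking the dimension faithfully: the numerical coefficients $v+1$ in the second pairing and $\tfrac13$ in the fourth are simply the three-dimensional values of the general $v+g^{ij}g_{ij}-2$ and $1/(g^{ij}g_{ij})$, and it is tempting to mishandle the trace $g^{ij}g_{ij}=3$ and thereby obtain the wrong constants. Once these two points are settled, each of the four cases reduces to a one-line check that the $\Upsilon$-terms cancel.
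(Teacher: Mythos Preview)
Your proposal is correct and follows precisely the ``direct calculation'' route that the paper itself names as its primary proof. The paper also sketches an alternative device---for instance writing the first pairing as $\phi^{-v+1}\psi^{w+1}\nabla_i[\phi^v\psi^{-w}]$, which is manifestly invariant since $\phi^v\psi^{-w}$ has weight zero---but this is offered only as a variation, and your explicit anomaly-cancellation argument is entirely in line with what the authors intend.
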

\begin{proof}
These are all easily verified by direct calculation. Alternatively, we may
employ evident variations on the trick used so far. For example, for
non-vanishing $\psi$ and $\phi$ we may write the first pairing as
$$\phi^{-v+1}\psi^{w+1}\nabla_i[\phi^v\psi^{-w}],$$
which is clearly invariant since $\phi^v\psi^{-w}$ has weight zero. All of
these pairings are similarly based on well-known conformally invariant linear
differential operators.
\end{proof}
Notice that the bundles occurring in these pairings are irreducible in the
sense that they are associated to irreducible representations of the orthogonal
group. These are the bundles between which it is relatively straightforward to
find invariant pairings. Here are two more examples that we shall need.
\begin{lemma}\label{secondpairings}
The following pairings are conformally invariant for $\psi$ of
weight $v$ and $\phi_{ij}$ being symmetric trace-free and of weight~$w$.
$$\begin{array}{ccl} \psi\times\phi_{ij}&\mapsto&
\underbrace{v\psi\nabla^i\phi_{ij}-(w+1)\phi_{ij}\nabla^i\psi}_{
\mbox{\scriptsize\rm weight $v+w-2$}}\\
\rule{0pt}{20pt}\psi\times\phi_{ij}&\mapsto&
v\psi[\nabla_{(i}\phi_{jk)}-\frac25\delta_{(ij}\nabla^l\phi_{k)l}]\\
&&\underbrace{\rule{0pt}{12pt}\textstyle\qquad{}
-(w-4)[\phi_{(ij}\nabla_{k)}\psi-\frac25\delta_{(ij}\phi_{k)l}\nabla^l\psi]}_{
\mbox{\scriptsize\rm symmetric trace-free of weight $v+w$}}
\end{array}$$
\end{lemma}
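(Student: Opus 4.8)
The plan is to mirror the proof of Lemma~\ref{firstpairings}: each of the two pairings will be exhibited as a fixed power of $\psi$ multiplied by a genuinely conformally invariant linear differential operator applied to the reweighted field $\psi^a\phi_{ij}$, the exponent $a$ being chosen so that $\psi^a\phi_{ij}$ acquires exactly the critical weight at which that operator is invariant. Since both the pairings and the transformation law~(\ref{defofinvariant}) are polynomial identities in the jets of $\psi$, $\phi_{ij}$ and in $\Upsilon_i$, it suffices to verify them wherever $\psi\neq 0$, and there the factoring is legitimate. Thus the whole argument reduces to isolating the two underlying invariant operators on symmetric trace-free $2$-tensors and pinning down the weights at which they are invariant.

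For the first pairing the relevant operator is the divergence $\Phi_{ij}\mapsto\nabla^i\Phi_{ij}$. First I would record the transformation of the coupled connection on a symmetric trace-free $2$-tensor, obtained from the $1$-form rule by adding one more Christoffel correction per index; contracting the derivative index against a tensor index and using $g^{ij}\Phi_{ij}=0$ to kill the term $\Upsilon_j\Phi^k{}_k$, the $\Upsilon$-corrections collapse to a single multiple $(w+1)\Upsilon^i\Phi_{ij}$ of the undifferentiated tensor. Hence $\nabla^i\Phi_{ij}$ is conformally invariant precisely when $\Phi_{ij}$ has weight~$-1$. Setting $a=-(w+1)/v$, so that $\psi^a\phi_{ij}$ has weight~$-1$, a single application of the Leibniz rule to $\nabla^i[\psi^a\phi_{ij}]$ reproduces $v^{-1}\psi^{a-1}$ times the first pairing, which therefore inherits conformal invariance of the stated weight $v+w-2$.

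For the second pairing the relevant operator is the symmetric trace-free gradient $\Phi_{ij}\mapsto\nabla_{(i}\Phi_{jk)}-\tfrac25 g_{(ij}\nabla^l\Phi_{k)l}$. Here I would first note that the coefficient $\tfrac25$ is exactly the projector onto totally symmetric trace-free $3$-tensors in dimension three, since the trace of $\nabla_{(i}\Phi_{jk)}$ equals $\tfrac23\nabla^l\Phi_{lk}$. Symmetrising the connection transformation over the three free indices, the three terms in which $\Upsilon$ carries a free index coalesce into $(w-4)\Upsilon_{(i}\Phi_{jk)}$, while the two metric-valued terms coalesce into $2g_{(ij}\Upsilon^l\Phi_{k)l}$; combining this with the transformation of the trace-correction term, whose $\Upsilon$-part contributes $-\tfrac25(w+1)g_{(ij}\Upsilon^l\Phi_{k)l}$, one finds that both $\Upsilon$-contributions vanish simultaneously exactly at weight~$w=4$ (where $(w-4)=0$ and $2-\tfrac25(w+1)=0$). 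Choosing $a=(4-w)/v$ so that $\psi^a\phi_{ij}$ has weight~$4$, the Leibniz rule applied to the invariant operator on $\psi^a\phi_{ij}$ reproduces $v^{-1}\psi^{a-1}$ times the second pairing, giving conformal invariance of weight $v+w$.

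The main obstacle is the bookkeeping in the second base operator: one must keep careful track of the symmetrisations and of the interplay between the metric-valued $\Upsilon$-terms coming from $\nabla_{(i}\Phi_{jk)}$ and those coming from the trace-correction, and it is only the simultaneous conspiracy of the coefficient $\tfrac25$ and the weight $4$ that forces the cancellation --- a feature special to dimension three. In both cases the trace-free hypothesis on $\phi_{ij}$ is essential, as it is precisely what removes the otherwise obstructing $\Upsilon_j\Phi^k{}_k$-type terms. As an independent check, each identity could instead be verified by brute force directly from the rescaling formulae~(\ref{jetchange}) for the jets, but the factoring route makes transparent why these particular coefficients and weights occur.
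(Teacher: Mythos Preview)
Your proof is correct and follows precisely the ``trick'' that the paper itself advertises in the proof of Lemma~\ref{firstpairings}: manufacture each pairing as a power of $\psi$ times an invariant linear operator applied to $\psi^a\phi_{ij}$ at the critical weight. The paper's own proof of Lemma~\ref{secondpairings} is simply the one-line ``Easily verified by direct calculation,'' so you have supplied exactly the structure the paper leaves implicit; one small remark is that in your second computation the two conditions $(w-4)=0$ and $2-\tfrac25(w+1)=0$ are not really independent, since $g_{(ij}\Upsilon^l\Phi_{k)l}$ is pure trace in dimension three and hence is automatically killed by the trace-free projection---but this only streamlines, and does not affect, your argument.
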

\begin{proof}
Easily verified by direct calculation.
\end{proof}
In fact, all the invariant pairings that we shall need may be constructed from
invariant linear differential operators. (There are, however, many invariant
pairings that do not arise in this way.) We are now able to list the
almost all the conformal invariants that we shall use.

\begin{thm}\label{usefullist}
The following are conformal differential invariants of a smooth function $f$ 
locally defined on ${\mathbb{R}}$. 
$$\begin{array}c
J\equiv f^if_i\qquad
Z\equiv f^{ij}f_if_j+Jf^j{}_j\\[3pt] 
X\equiv 2f_i{}^jf_jf^{ik}f_k-Jf^{jk}f_{jk}+J(f^j{}_j)^2\end{array}$$
If we now define
$$\begin{array}l\sigma_i\equiv J\nabla_iZ-2Z\nabla_iJ\qquad
\tau_i\equiv J\nabla_iX-3X\nabla_iJ\\[3pt]
\phi_{ij}\equiv \textstyle Jf_{ij}-2f_{(i}f_{j)}{}^kf_k
-\frac13Jf^k{}_k\delta_{ij}+\frac23f^{kl}f_kf_l\delta_{ij},
\end{array}$$
then the following are also conformal invariants.
$$\begin{array}c
R\equiv f^i\sigma_i              \qquad 
S\equiv f^i\tau_i                \qquad 
A\equiv \sigma^i\sigma_i         \qquad
B\equiv \tau^i\tau_i\\[3pt]
D\equiv \sigma^i\tau_i           \qquad
T\equiv \phi^{ij}\sigma_i\sigma_j\qquad
U\equiv \phi^{ij}\tau_i\tau_j
\end{array}$$
If we now define
$$\begin{array}l\textstyle\rho_{ijk}\equiv 
J\nabla_{(i}\phi_{jk)}-3\phi_{(ij}\nabla_{k)}J
-\frac25\delta_{(ij}\nabla^l\phi_{k)l}+\frac65\delta_{(ij}\phi_{k)l}\nabla^lJ\\[3pt]
\lambda_j\equiv 2J\nabla^i\phi_{ij}-\phi_{ij}\nabla^iJ,\end{array}$$
then the following are also conformal invariants.
$$\begin{array}c 
F\equiv\rho^{ijk}\phi_{ij}\lambda_k\qquad G\equiv\phi^{ij}\lambda_i\lambda_j
\qquad K\equiv \sigma^i\lambda_i\\[3pt]
M\equiv\tau^i\lambda_i\qquad N\equiv\sigma_i\rho^{ijk}\phi_{jk}
\qquad W\equiv\rho^{ijk}\rho_{ij}{}^l\phi_{kl}.\end{array}$$
\end{thm}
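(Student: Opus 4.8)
The plan is to exploit two principles, both already available in the excerpt. \emph{First}, the invariant bilinear pairings of Lemmas~\ref{firstpairings} and~\ref{secondpairings} manufacture new conformally invariant tensors out of old ones. \emph{Second}, any complete contraction of conformally invariant tensors is again a conformal invariant: every index is contracted, so the result is manifestly coordinate-invariant, while each $g^{ij}$ used to raise an index contributes only a power of $\Omega$, so contracting invariant tensors of weights $a$ and $b$ produces an invariant of weight $a+b-2$. Since $J$ (weight $-2$), $Z$ (weight $-4$) and $X$ (weight $-6$) are already known to be invariants and $f_i$ is an invariant $1$-form of weight $0$ by~(\ref{jetchange}), it remains only to recognise each listed quantity as assembled from these by the two principles.

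For the first-order tensors I would match each to a pairing of Lemma~\ref{firstpairings}. Taking $\psi=J$, $\phi=Z$ in the first pairing gives $-2\sigma_i$, of weight $-6$, and taking $\psi=J$, $\phi=X$ gives $-2\tau_i$, of weight $-8$; being scalar multiples of invariants, $\sigma_i$ and $\tau_i$ are themselves invariant. The tensor $\phi_{ij}$ is the one member of the list not produced by a pairing: as already noted in the proof of Theorem~\ref{thm:XYzero}, it is the symmetric trace-free part of $J^2\nabla_i[J^{-1}f_j]$. Since $J^{-1}f_j$ has weight $2$, the conformal Killing operator $\psi_j\mapsto\nabla_{(i}\psi_{j)}-\tfrac13 g_{ij}\nabla^k\psi_k$ recorded just before Lemma~\ref{firstpairings} shows the symmetric trace-free part to be invariant, and the factor $J^2$ gives $\phi_{ij}$ weight $-2$.

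With $f_i,\sigma_i,\tau_i,\phi_{ij}$ in hand, the scalars $R,S,A,B,D,T,U$ are complete contractions and hence invariant by the second principle; for example $R=f^i\sigma_i$ has weight $-8$ and $T=\phi^{ij}\sigma_i\sigma_j$ has weight $-18$. Next I would apply Lemma~\ref{secondpairings} with $\psi=J$ and the symmetric trace-free $\phi_{ij}$: the first pairing returns $-\lambda_j$, of weight $-6$, and the second returns a multiple of $\rho_{ijk}$, of weight $-4$, so $\lambda_j$ and $\rho_{ijk}$ are invariant. Finally $F,G,K,M,N,W$ are once more complete contractions of the invariant tensors $\sigma_i,\tau_i,\phi_{ij},\lambda_j,\rho_{ijk}$, and therefore invariant. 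This exhausts the list.

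The argument is essentially bookkeeping, and the step needing care is the weight accounting: one must feed the correct weights into each pairing so that the stated numerical coefficients are reproduced, and in particular check that every term of $\rho_{ijk}$ carries the power of $J$ needed for all four of its terms to share the weight $-4$ forced by Lemma~\ref{secondpairings}. The only genuinely conceptual point, as opposed to routine verification, is the recognition that $\phi_{ij}$ must come from the conformal Killing operator applied to $J^{-1}f_j$ rather than from a pairing, since it is the unique entry built directly from the second derivatives of $f$. Once this is settled, conformal invariance of the whole menagerie follows automatically from the two Lemmas together with the contraction principle.
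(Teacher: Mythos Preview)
Your proof is correct and follows essentially the same route as the paper, whose own argument is simply that $J$, $Z$, $X$ are already known and the remaining quantities are manufactured from Lemmata~\ref{firstpairings} and~\ref{secondpairings}; you have merely filled in the bookkeeping and made the contraction principle explicit. One small correction: $\phi_{ij}$ \emph{is} produced by a pairing---take $\psi_i=f_i$ (weight~$0$) and $\phi=J$ (weight~$-2$) in the fourth pairing of Lemma~\ref{firstpairings} to obtain $2\phi_{ij}$---so your claim that it is ``the one member of the list not produced by a pairing'' is mistaken, though your alternative derivation via the conformal Killing operator on $J^{-1}f_j$ is equally valid and indeed appears elsewhere in the paper.
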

\begin{proof}
We have already observed that $J$, $Z$, and $X$ are conformally invariant. The
remaining invariants in this theorem are manufactured from these basic ones by
using Lemmata~\ref{firstpairings} and~\ref{secondpairings} as appropriate.
\end{proof}
There is one more invariant that we shall need and its construction is slightly
different. Let $Q^{ij}$ be any symmetric form and set 
$\upsilon = \epsilon^{jkl}(Jf_k{}{}^iQ_{ij} - f^iQ_{ij}f_{km}f^m)f_l$. Then the
following identity holds:
\begin{equation} \label{a-three}
 Y(Q^{ij}\omega_i\omega_j - Q^{ij}\eta_i\eta_j) = 4E\upsilon
\end{equation}
(recall that $E\equiv\epsilon^{ijk}f_i\omega_jf_k{}^\ell\omega_\ell$).
In the case when $Q^{ij} = f^{ijk}f_k - 2f^{ik}f_k{}^j$, one may check that
$\upsilon$ is conformally invariant. It is convenient and consistent 
with~\cite{be} to define the related
conformal invariant $V = 4J\upsilon$. It has a different character to our
previous invariants in that it changes sign under change of orientation. It is
said to be an odd invariant.

It is useful to record the conformal weight and homogeneity in $f$ for each of
the invariants of Theorem~\ref{usefullist} together with~$V$:--
\begin{center}
\begin{tabular}{r|c|c|c|c|c|c|c|c|c}
             &$J$ &$Z$ &$X$ &$R$ &$S$  &$V$  &$A$  &$B$  &$D$\\ \hline
\mbox{weight}&$-2$&$-4$&$-6$&$-8$&$-10$&$-11$&$-14$&$-18$&$-16$\\
\mbox{degree}&$2$ &$3$ &$4$ &$6$ &$7$  &$8$  &$10$ &$12$ &$11$
\end{tabular}\\
\rule{0pt}{8pt}\\
\begin{tabular}{r|c|c|c|c|c|c|c|c}
             &$T$  &$U$  &$F$  &$G$  &$K$  &$M$  &$N$  &$W$\\ \hline
\mbox{weight}&$-18$&$-22$&$-18$&$-18$&$-14$&$-16$&$-18$&$-18$\\
\mbox{degree}&$13$ &$15$ &$13$ &$13$ &$10$ &$11$ &$13$ &$13$
\end{tabular}
\end{center}
Any polynomial combination with consistent total weight will also be invariant.
For example, the quantity $Y=Z^2-2JX$ introduced in (\ref{thisisY}) is a
conformal invariant of weight~$-8$ (and homogeneity~$6$). Other evident
invariants are not necessarily new. For example, it is easily verified by 
direct computation that 
\begin{equation}\label{aha}\textstyle\phi^{ij}\phi_{ij}=\frac23Z^2-JX.
\end{equation}
This gives yet another verification that $X$ is conformally invariant.

\section{Invariant derivation of certain equations}  \label{sec:elim-om}
Our aim is to eliminate $\omega_i$ from polynomial expressions of the form
$F(f_i, f_{ij}, f_{ijk},...., \omega_i)$, given that the equations
(\ref{firstthree}) hold. We suppose that $X<0$, so that in particular $Y>0$.
Recall that
$$
\eta_i = \frac{1}{\sqrt{Y}}\left\{ 2(f^{kl}f_k\omega_l)f_i 
 + (Jf_k{}^k - f^{kl}f_kf_l)\omega_i - 2Jf_i{}^k\omega_k\right\}\,,
$$
gives the other conjugate direction, where an ambiguity of sign arises with the
choice of square root. 
\begin{lemma} Let $Q^{ij}$ be any symmetric form. Then
\begin{eqnarray}
 & &Y(Q^{ij}\omega_i\omega_j+ Q^{ij}\eta_i\eta_j)  = 
  2Q^{ij}f_if_j(JX-Z^2) \label{a-one} \\
 & & \qquad + 2J^2Q_j{}^j(Zf_l{}^l-X) 
  - 2J^2ZQ^{ij}f_{ij} +4JZQ^{ij}f_i{}^kf_kf_j \nonumber \\
& & \sqrt{Y}Q^{ij}\omega_i\eta_j  =  -ZQ^{ij}f_if_j + 2JQ^{ij}f_if_j{}^kf_k 
  + J^2(f_k{}^kQ_l{}^l - Q^{kl}f_{kl}) \label{a-two}
\end{eqnarray}
\end{lemma}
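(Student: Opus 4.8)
The plan is to observe that every quantity appearing in both identities is a coordinate-free scalar built from $f_i,f_{ij},f_{ijk}$, the symmetric form $Q^{ij}$, the invariants $J,Z,X,Y$, and the conjugate directions $\omega_i,\eta_i$. Consequently it suffices to verify each identity at an arbitrary fixed point, and there I would pass to the normal form~(\ref{nor}), exactly as in the proof of Lemma~\ref{formulaforeta}. In these coordinates $f_1=f_2=f_{12}=0$, so $J=f_3{}^2$, while $Z$, $X$, $Y$ are given by~(\ref{thisisZ}),~(\ref{Xnormform}),~(\ref{thisisY}). The direction $\omega_i$ has $\omega_3=0$ with $\omega_1{}^2,\omega_2{}^2$ given by~(\ref{solution}), and, choosing the square root $\sqrt{Y}=f_3{}^2(f_{22}-f_{11})$ as in Lemma~\ref{formulaforeta}, the second direction is $\eta=(\omega_1,-\omega_2,0)$.

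The first step is to exploit that $\omega$ and $\eta$ differ only in the sign of their second component. Since $Q^{ij}$ is symmetric and $\omega_3=\eta_3=0$, the off-diagonal contribution $Q^{12}\omega_1\omega_2$ cancels in both symmetric combinations, leaving
\[
Q^{ij}(\omega_i\omega_j+\eta_i\eta_j)=2(Q^{11}\omega_1{}^2+Q^{22}\omega_2{}^2),
\qquad
Q^{ij}\omega_i\eta_j=Q^{11}\omega_1{}^2-Q^{22}\omega_2{}^2.
\]
In particular the left-hand sides see only $Q^{11}$ and $Q^{22}$. Multiplying by $Y$ (respectively $\sqrt{Y}$) and substituting~(\ref{solution}) gives the closed forms $Y\omega_1{}^2=f_3{}^6(f_{22}-f_{11})(f_{22}+f_{33})$, $Y\omega_2{}^2=f_3{}^6(f_{11}-f_{22})(f_{11}+f_{33})$, and $\sqrt{Y}\,\omega_1{}^2=f_3{}^4(f_{22}+f_{33})$, $\sqrt{Y}\,\omega_2{}^2=-f_3{}^4(f_{11}+f_{33})$, which serve as the target expressions for the two left-hand sides.

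The remaining step is to expand the right-hand sides in the same normal form, using $Q^{ij}f_if_j=Q^{33}f_3{}^2$, $Q^{ij}f_if_j{}^kf_k=f_3{}^2(Q^{13}f_{13}+Q^{23}f_{23}+Q^{33}f_{33})$, $f_k{}^k=f_{11}+f_{22}+f_{33}$, and $Q^{kl}f_{kl}=Q^{11}f_{11}+Q^{22}f_{22}+Q^{33}f_{33}+2Q^{13}f_{13}+2Q^{23}f_{23}$ (recall $f_{12}=0$). The crux, and the step I expect to demand the most care, is to watch the components $Q^{13},Q^{23},Q^{33}$ all disappear, matching the fact that the left-hand sides involve only $Q^{11},Q^{22}$. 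The $Q^{13}$ and $Q^{23}$ terms cancel immediately: in~(\ref{a-one}) between the $-2J^2ZQ^{ij}f_{ij}$ and $4JZQ^{ij}f_i{}^kf_kf_j$ contributions, and in~(\ref{a-two}) between the $-J^2Q^{kl}f_{kl}$ and $2JQ^{ij}f_if_j{}^kf_k$ contributions. The cancellation of $Q^{33}$ then follows after inserting the normal-form value $Z=f_3{}^2(f_{11}+f_{22}+2f_{33})$ (and, for~(\ref{a-one}), also $X=2f_3{}^2(f_{11}+f_{33})(f_{22}+f_{33})$) and collecting the coefficient, which reduces to zero. Once those cancellations are checked, the surviving $Q^{11}$ and $Q^{22}$ coefficients collapse, via the elementary factorisations $(f_{22}-f_{11})(f_{22}+f_{33})$ and $(f_{11}-f_{22})(f_{11}+f_{33})$, to exactly the target forms above, completing the verification.

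An alternative, purely invariant route would decompose the restriction of $Q^{ij}$ to the plane $\{f\}^\perp$ in the (non-orthogonal) basis $\{\omega_i,\eta_i\}$, whose Gram data are $\omega^i\omega_i=\eta^i\eta_i=J$ and $\sqrt{Y}\,\omega^i\eta_i=JZ$; but the normal-form check above is shorter, and is the one I would present.
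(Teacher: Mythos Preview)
Your proof is correct but follows a different route from the paper. The paper derives both formulae invariantly by skew-symmetrising a suitable $4$-tensor over all four indices---this vanishes identically in three dimensions---and then transvecting with $f^i$ and $\omega^j$ using~(\ref{firstthree}); for instance, expanding $f_{[i}\omega_jf_k{}^kQ_{l]}{}^l=0$ and contracting yields~(\ref{a-two}) directly. Your approach instead verifies the identities pointwise in the Riemannian normalisation~(\ref{nor}), exactly as the paper does for Lemma~\ref{formulaforeta} and suggests as an alternative for Lemma~\ref{lem:identity-quad}. The trade-off is that the paper's method explains \emph{why} such identities exist (they are dimension-dependent, arising from over-skewing), whereas your method is more mechanical but entirely self-contained and leaves nothing to the reader; in particular, your explicit tracking of the $Q^{13},Q^{23},Q^{33}$ cancellations is a genuine check that the paper's sketch does not supply.
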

\begin{proof} Both formulae can be deduced by skew-symmetrising over the
indices of an appropriate $4$-tensor. For example, to derive the second,
consider the four tensor: $T_{ijkl} = f_i\omega_jf_k{}^kQ_l{}^l$ and apply the
identity: $T_{[ijkl]} = 0$. On transvecting first with $f^i$, then with
$\omega^j$ and applying (\ref{firstthree}), the result follows.
\end{proof}

%When $Q^{ij}$ equals first $f^{ij}$ and then $f^{ijk}f_k$ we obtain 
%the following consequence.
%\begin{cor}  The following identities hold:
%\begin{equation} \label{id-6}
%\begin{array}{l}
%\sqrt{Y}f^{ij}\omega_i\eta_j  =  - Z(f^{ij}f_if_j) + JX\,; \\
%\sqrt{Y}f^{ijk}\omega_i\eta_jf_k  =  
 %J^2(f_i{}^ijf_j)f_k{}^k - J^2f^{ijk}f_{ij}f_k  \\
 %\qquad + 2Jf^{ijk}f_if_jf_k{}^lf_l - Zf^{ijk}f_if_jf_k\,.
% \end{array}
%\end{equation}
%\end{cor}

Now let us find invariant proofs of some of the identities of
\S\ref{sec:resolution}. Recall
$$
\begin{array}{rcl}
p^+ & \equiv & f^{ijk}f_i\omega_j\omega_k + f^{ijk}f_if_jf_k 
            - 2 f^{ij}f_j{}^k\omega_i\omega_k + 2f^{ij}f_j{}^kf_if_k  \\
p^- & \equiv & f^{ijk}f_i\eta_j\eta_k + f^{ijk}f_if_jf_k 
                 - 2 f^{ij}f_j{}^k\eta_i\eta_k + 2f^{ij}f_j{}^kf_if_k \\
q^+ & \equiv & f^{ijk}\omega_i\omega_j\omega_k + f^{ijk}f_if_j\omega_k 
                                         + 4 f^{ij}f_j{}^kf_i\omega_k \\
q^- & \equiv & f^{ijk}\eta_i\eta_j\eta_k + f^{ijk}f_if_j\eta_k 
                                            + 4 f^{ij}f_j{}^kf_i\eta_k\,.
\end{array}
$$
\begin{thm} \label{thm:p-even}
The following identities hold:
\begin{eqnarray}
Y(p^++p^-) & = & ZS-2XR+2XY \label{p-even}\\
Y(p^+-p^-) & = &  EV/J\,. \label{p-odd}
\end{eqnarray}
where $X,Y,Z,R,S,V$ are the standard conformal invariants and where
$E\equiv\epsilon^{ijk}f_i\omega_jf_k{}^\ell\omega_\ell$.
\end{thm}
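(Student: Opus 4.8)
The plan is to split each of $p^+$ and $p^-$ into a part quadratic in the conjugate direction and an $\omega$-free part, and then to feed these into the two pairing identities (\ref{a-one}) and (\ref{a-three}) already established. Setting
$$\tilde Q^{jk}=f^{ijk}f_i-2f^{ij}f_j{}^k\qquad
\hat Q^{jk}=f^{ijk}f_i+2f^{ij}f_j{}^k,$$
both symmetric, one reads off from the displayed definitions that $p^+=\tilde Q^{jk}\omega_j\omega_k+\hat Q^{jk}f_jf_k$ and $p^-=\tilde Q^{jk}\eta_j\eta_k+\hat Q^{jk}f_jf_k$, where the total symmetry of $f_{ijk}$ makes $f^{ijk}f_i$ the coefficient of $\omega_j\omega_k$ and $-2f^{ij}f_j{}^k$ the (symmetric) coefficient of $\omega_i\omega_k$. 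The $\omega$-free term $\hat Q^{jk}f_jf_k$ is the same polynomial in the derivatives of $f$ for both, so it cancels in $p^+-p^-$ and doubles in $p^++p^-$:
$$p^+-p^-=\tilde Q^{jk}(\omega_j\omega_k-\eta_j\eta_k),\qquad
p^++p^-=\tilde Q^{jk}(\omega_j\omega_k+\eta_j\eta_k)+2\hat Q^{jk}f_jf_k.$$

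The odd identity (\ref{p-odd}) then falls out at once. Comparing $\tilde Q^{jk}$ with the symmetric form $Q^{ij}=f^{ijk}f_k-2f^{ik}f_k{}^j$ used to define $V$ in Appendix~\ref{conformalinvariants}, the two agree (again using the symmetry of $f_{ijk}$ to identify $f^{ijk}f_i$ with $f^{ijk}f_k$). Hence the identity (\ref{a-three}) applied to $Q=\tilde Q$ gives $Y(p^+-p^-)=4E\upsilon$, and since $V=4J\upsilon$ for precisely this $Q$ we obtain $Y(p^+-p^-)=4E\cdot V/(4J)=EV/J$, which is (\ref{p-odd}).

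For the even identity (\ref{p-even}) I would apply (\ref{a-one}) with $Q=\tilde Q$ and add $2Y\hat Q^{jk}f_jf_k$, expressing $Y(p^++p^-)$ as a fixed linear combination of the complete contractions $\tilde Q^{ij}f_if_j$, $\tilde Q_j{}^j$, $\tilde Q^{ij}f_{ij}$, $\tilde Q^{ij}f_i{}^kf_kf_j$ and $\hat Q^{ij}f_if_j$. The essential step is to recognise the third-order pieces these produce, principally $f^{ijk}f_if_jf_k$ and $f^i\nabla_i(\Delta f)$, as the third-order content of $R=f^i\sigma_i$ and $S=f^i\tau_i$, by reading the defining relations $\sigma_i=J\nabla_iZ-2Z\nabla_iJ$ and $\tau_i=J\nabla_iX-3X\nabla_iJ$ backwards. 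Once these are absorbed into $R$ and $S$, the remaining second-order scalars must organise themselves into products of $J$, $Z$ and $X$, since those are the only lower-order invariants of the right weight; the non-invariant scalars that appear individually, such as $f^{ij}f_{ij}$ and $(\Delta f)^2$, are reorganised explicitly using the definitions of $Z$ and $X$ together with the identity (\ref{aha}), $\phi^{ij}\phi_{ij}=\tfrac23Z^2-JX$. Collecting coefficients and substituting $Y=Z^2-2JX$ should leave exactly $ZS-2XR+2XY$.

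The hard part will be precisely this last bookkeeping: there are more independent third-order contractions (four of them) than there are third-order invariants (only $R$ and $S$), so the collapse is forced only because the particular combination occurring in $Y(p^++p^-)$ is the one generated by the two relations hidden in $\sigma_i$ and $\tau_i$, and a single mis-tracked coefficient derails it. For this reason I would, in the end, verify (\ref{p-even}) in the Riemannian normalisation (\ref{nor}): there, by (\ref{pe}) together with $p_e=\tfrac12(p^++p^-)$, the left-hand side is the explicit polynomial $2Yp_e$ in $f_3$, $f_{ij}$, $f_{ijk}$, while the right-hand side comes from the normal-coordinate forms of $Z$, $X$, $Y$, $R$ and $S$, the quantities $\omega_1{}^2$ and $\omega_2{}^2$ being eliminated through (\ref{solution}). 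Both sides are manifestly coordinate-invariant, so this finite check settles the identity; it is the route signalled by the surrounding text and mirrors the parallel treatment of $Q=Y\sqrt{Y}\,q^+q^-$.
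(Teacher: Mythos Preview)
Your approach is essentially the paper's own: split off the symmetric form $Q^{ij}=f^{ijk}f_k-2f^{ik}f_k{}^j$ (your $\tilde Q$, modulo a harmless index mislabel in its second term) and feed it into (\ref{a-one}) for the even identity and (\ref{a-three}) for the odd one, the latter yielding (\ref{p-odd}) immediately via $V=4J\upsilon$. The paper's proof says exactly this in two lines, leaving the recognition of the right-hand side of (\ref{p-even}) to the normal-coordinate verification you describe.
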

\begin{proof} The first identity is an application of (\ref{a-one}), where we
have set $Q^{ij} = f^{ijk}f_k - 2f^{ik}f_k{}^j$. For the second, we apply
(\ref{a-three}) with symmetric form $Q^{ij} = f^{ijk}f_k - 2f^{ik}f_k{}^j$.
\end{proof}
Note that both the LHS and the RHS of equation (\ref{p-odd}) change sign under
the interchange of the conjugate directions, the equation itself being
well-defined and independent of this operation.

The condition $p^+p^-\equiv0$ of Theorem \ref{mainthm} now follows since
$$4Y^2p^+p^-=Y^2(p^++p^-)^2 - Y^2(p^+-p^-)^2.$$ 
On applying (\ref{identityEX}),
this gives the necessary condition $P\equiv0$ of \S\ref{sec:resolution}:
$$
2(ZS-2XR+2XY)^2 + XV^2 = 0\,.
$$

Now consider the remaining conditions. We claim that we can use (\ref{a-one})
and (\ref{a-two}) to write $q^{ijk}\omega_i\omega_j\omega_k$ as a linear form
in $\omega_i$, where $q^{ijk}$ is any symmetric tensor.
 
For this, first set $Q^{ij} = q^{ijk}\omega_k$.  Then from (\ref{a-one}),
\begin{equation} \begin{array}{l}
Yq^{ijk}\omega_i\omega_j\omega_k  =  
-Y q^{ijk}\eta_i\eta_j\omega_k +2q^{ijk}f_if_j\omega_k(JX-Z^2) \\
\enskip{}+ 2J^2q_j{}^{jk}\omega_k(Zf_l{}^l-X) 
- 2J^2Zq^{ijk}f_{ij}\omega_k +4JZq^{ijk}f_i{}^lf_lf_j\omega_k\,. \label{a-four}
\end{array} \end{equation}
We now have to calculate $Y q^{ijk}\eta_i\eta_j\omega_k$. For this we set
$Q^{ij} = \sqrt{Y}q^{ijk}\eta_k$ and apply (\ref{a-two}):
$$ \begin{array}{l}
Yq^{ijk}\eta_i\eta_j\omega_k = \sqrt{Y}Q^{ij}\omega_i\eta_j \\  
= -Zq^{ijk}f_if_jv_k + 2Jq^{ijk}f_if_j{}^lf_lv_k 
                     + J^2(f_j{}^jq_l{}^{lk}v_k - q^{ijk}f_{ij}v_k)
\end{array}
$$
where $v_i=\sqrt{Y}\eta_i = 2(f^{kl}f_k\omega_l)f_i 
+ (Jf_k{}^k - f^{kl}f_kf_l)\omega_i - 2Jf_i{}^k\omega_k$. On expanding the 
right-hand side and substituting into (\ref{a-four}), we obtain:
$$  \begin{array}{l}
Yq^{ijk}\omega_i\omega_j\omega_k  =  
\omega_i\Big\{ q^{ijk}f_jf_k(-Y-2Zf^{lm}f_lf_m) \\
 + Jq^{ij}{}_j\Big[ Y+Z(f^{mn}f_mf_n) - 2(f^{mn}f_mf_n)^2\Big] \\
 +J(Z+2f^{lm}f_lf_m)(2q^{ijk}f_j{}^lf_lf_k - Jq^{ijk}f_{jk}) 
                                           - 2JZq^{jkl}f_jf_kf_l{}^i \\
 -2f^{in}f_n\Big[  2Jq^{jkl}f_jf_kf_l{}^mf_m + J^2f_j{}^jq_k{}^{kl}f_l 
                       - J^2q^{jkl}f_{jk}f_l - Zq^{jkl}f_jf_kf_l\Big] \\
 + 4J^2q^{jkl}f_jf_k{}^mf_mf_l{}^i + 2J^3(f_j{}^jq_k{}^{kl}f_l{}^i 
                                   - q^{jkl}f_{jk}f_l{}^i) \Big\}\,, 
\end{array}
$$
as claimed.
 
We can now express $Yq^+$ by setting $q^{ijk} = f^{ijk}$ and then adding 
$Y(f^{ijk}f_jf_k\omega_i + 4f^{jk}f_k{}^if_j\omega_i)$:
$$
\begin{array}{l} Yq^+  =   
\omega_i\Big\{ -2Zf^{ijk}f_jf_k(f^{lm}f_lf_m) \\ 
 +Jf^{ij}{}_j\Big[ Y+Z(f^{mn}f_mf_n) - 2(f^{mn}f_mf_n)^2\Big] \\
 +J(Z+2f^{lm}f_lf_m)(2f^{ijk}f_j{}^lf_lf_k - Jf^{ijk}f_{jk}) 
 - 2JZq^{jkl}f_jf_kf_l{}^i \\
 -2f^{in}f_n\Big[ 2Jf^{jkl}f_jf_kf_l{}^mf_m + J^2f_j{}^jf_k{}^{kl}f_l 
 - J^2f^{jkl}f_{jk}f_l - Zf^{jkl}f_jf_kf_l\Big] \\
 + 4J^2f^{jkl}f_jf_k{}^mf_mf_l{}^i + 2J^3(f_j{}^jf_k{}^{kl}f_l{}^i 
 - f^{jkl}f_{jk}f_l{}^i) + 4f^{jk}f_k{}^if_j\Big\} 
\end{array}
$$
This has the form $Yq^+ \equiv \alpha^i\omega_i$, where each $\alpha^i$ is an
explicit Riemannian invariant polynomial expression in $f_i, f_{ij}, f_{ijk}$,
which at each point is defined up to addition of an arbitrary linear
combination:
$$
af^i + b[(f^{kl}f_kf_l)\omega^i + Jf^{ik}\omega_k]\,.
$$
By symmetry, we must also have $Yq^- = \alpha^i\eta_i$. Then the fifth
condition $Y\sqrt{Y}q^+q^-\equiv 0$ has the form $r^{ij}\omega_i\eta_j = 0$,
where $r^{ij}$ is the symmetric form $r^{ij} = (1/\sqrt{Y})\alpha^i\alpha^j$.
We can now apply (\ref{a-two}) to give an invariant derivation of the quantity
$Q\equiv Y\sqrt{Y}q^+q^-$ of~\S\ref{sec:resolution}.
 
The final equation of Theorem \ref{mainthm} is $(p^+q^-)^2 + (p^-q^+)^2 = 0$.
But
$$
\begin{array}{l}
4\{ (p^+q^-)^2 + (p^-q^+)^2\} = 
\{ (p^++p^-)^2 + (p^+-p^-)^2\} \{ (q^+)^2 + (q^-)^2\} \\  
- 2(p^++p^-)(p^+-p^-)\{ (q^+)^2 -(q^-)^2\}\,,
\end{array}
$$
which we can see as a product of conformally invariant terms that we can deal
with. First, multiply the whole expression by $Y^3\sqrt{Y}$. Then $Y(p^++p^-)$
is given by (\ref{p-even}), whilst $Y(p^+-p^-)$ is given by (\ref{p-odd}). On
the other hand, 
$$Y\sqrt{Y}((q^+)^2 + (q^-)^2) = r^{ij}\omega_i\omega_j +r^{ij}\eta_i\eta_j,$$
which can be expressed using (\ref{a-one}) above, whereas
$$Y\sqrt{Y}((q^+)^2 - (q^-)^2) = r^{ij}\omega_i\omega_j-r^{ij}\eta_i\eta_j$$
can be expressed using (\ref{a-three}). Note that the result involves $E^2$,
which by (\ref{identityEX}) can be written in terms of conformal invariants 
of~\S\ref{conformalinvariants}.

\section{Normalising conformal Killing fields} \label{normalised_conf_Killing}
The conformal Killing fields on ${\mathbb{R}}^3$ form a finite-dimensional
vector space on which ${\mathrm{O}}(4,1)$ acts via the conformal automorphisms
of~$S^3$. It is the adjoint representation~${\mathfrak{o}}(4,1)$ and so the
question of normalising a conformal Killing field up to conformal
transformations comes down to finding canonical representatives for the orbits
of this action. This is a question of linear algebra, which may be stated more
generally as follows. Suppose we are given a real symmetric $n\times n$ matrix
$H$ of Lorentzian signature meaning that there is a real invertible $n\times n$
matrix such that
\begin{equation}\label{Lorentz}
A^tHA=\mbox{\small$\left[\begin{array}{cccc}1&0&0&0\\ 
0&\raisebox{-1pt}[0pt]{$\ddots$}&0&0\\
0&0&1&0\\ 0&0&0&-1
\end{array}\right]$}.\end{equation}
Suppose $N$ is a real skew $n\times n$ matrix. We would like to find a real
invertible $n\times n$ matrix $A$ such that $A^tHA$ and $A^tNA$ are placed in
some canonical form. For example, we may insist on (\ref{Lorentz}) for $A^tHA$ 
but following \cite{laplace,eg} we normally prefer (written in block form) 
\begin{equation}\label{prefer}
A^tHA=\mbox{\small$\left[\begin{array}{ccc}0&0&1\\
0&{\mathrm{Id}}&0\\ 1&0&0
\end{array}\right]$},\end{equation}
where ${\mathrm{Id}}$ is the $(n-2)\times (n-2)$ identity matrix.
\begin{lemma}\label{onlyone}
Suppose $H$ is a real symmetric $n\times n$ matrix of Lorentzian
signature and $N$ is a real skew $n\times n$ matrix. Suppose that, regarded as
a complex matrix, $H^{-1}N$ has only one eigenvector up to scale. Then, the
eigenvalue is zero, it must be that $n=3$, and we can find an invertible real
$3\times 3$ matrix $A$ such that
$$A^tHA=\mbox{\small$\left[\begin{array}{ccc}0&0&1\\
0&1&0\\ 1&0&0
\end{array}\right]$}\quad\mbox{and}\quad
A^tNA=\mbox{\small$\left[\begin{array}{ccc}0&0&0\\ 
0&0&2\\ 0&-2&0
\end{array}\right]$}.$$
\end{lemma}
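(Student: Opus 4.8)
Write $M=H^{-1}N$, which is well defined since Lorentzian signature forces $H$ invertible. The plan is to exploit the interaction between $H$ being symmetric and $N$ being skew. First I would record that $N^t=-N$ together with $H^t=H$ gives $M^tH=-HM$; that is, $M$ is skew-adjoint for the bilinear form $\langle x,y\rangle=x^tHy$. Two spectral consequences follow immediately: since $M^t=-HMH^{-1}$ is conjugate to $-M$ and always shares its spectrum with $M$, the spectrum of $M$ is invariant under $\lambda\mapsto-\lambda$; and since $M$ is real it is also invariant under complex conjugation.

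Next I would translate the hypothesis. Having ``only one eigenvector up to scale'' means the total geometric multiplicity of $M$ over $\mathbb{C}$ is one, which is to say $M$ consists of a single Jordan block of size $n$. Hence $M$ has a single eigenvalue $\lambda$ of algebraic multiplicity $n$, and the reflection symmetry of the spectrum forces $\lambda=-\lambda$, so $\lambda=0$. This already proves the first assertion: the eigenvalue is zero and $M$ is nilpotent with $M^{n-1}\neq0$ and $M^n=0$.

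The heart of the matter is then to see that such a nilpotent can be skew-adjoint for a Lorentzian form only when $n=3$. I would choose a Jordan basis $e_1,\dots,e_n$ with $Me_1=0$ and $Me_i=e_{i-1}$, and examine the Gram matrix $b_{ij}=\langle e_i,e_j\rangle$. Skew-adjointness gives the recursion $b_{i-1,j}=-b_{i,j-1}$ (with the convention that indices outside $1,\dots,n$ yield zero), whose general solution along anti-diagonals is $b_{ij}=(-1)^id_{i+j}$. Symmetry of $H$ forces $d_s=0$ for odd $s$, while the boundary cases $i=1$ and $j=1$ of the recursion (where $Me_1=0$ enters) force $d_s=0$ for all $s\le n$. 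Thus $B$ is anti-lower-triangular with main anti-diagonal $b_{i,n+1-i}=(-1)^id_{n+1}$, and non-degeneracy is equivalent to $d_{n+1}\neq0$; since $d_s=0$ for odd $s$, this requires $n$ to be odd.

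Finally I would compute the signature. Because $B$ is anti-triangular with $\det B=\pm d_{n+1}^{\,n}\neq0$, scaling the off-main anti-diagonals to zero keeps the determinant nonzero, so the signature equals that of the pure anti-diagonal form $b_{i,n+1-i}=(-1)^id_{n+1}$. Pairing the index $i$ with $n+1-i$ produces one positive and one negative eigenvalue per pair, giving $(n-1)/2$ of each, with the central index contributing one further eigenvalue; hence the signature is $((n+1)/2,(n-1)/2)$ up to interchange. Demanding that this be Lorentzian $(n-1,1)$ forces $(n-1)/2=1$, i.e.\ $n=3$. With $n=3$ in hand the normal form is elementary: one checks directly that the displayed targets satisfy $H'^{-1}N'=\begin{pmatrix}0&-2&0\\0&0&2\\0&0&0\end{pmatrix}$, a single nilpotent block, so a choice of $A$ adapting the Jordan flag of $M$ to the standard form of $H$ realises both matrices simultaneously. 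The main obstacle is the signature count: everything hinges on showing that a single nilpotent Jordan block carries only a balanced invariant form, so that the single unit of Lorentzian imbalance pins $n$ to $3$.
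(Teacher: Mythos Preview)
Your argument is correct and takes a genuinely different route from the paper's. The paper normalises geometrically: after showing the eigenvalue is zero via $\tr(H^{-1}N)=0$, it observes that the unique eigenvector must be null, uses the transitivity of ${\mathrm O}(n-1,1)$ on null vectors to put $H$ in standard form with the eigenvector in a fixed position, and then reads off the block shape of $N$; the key step is that $\tr((H^{-1}N)^2)=0$ kills the inner skew block, forcing $\rk N=2$ and hence $n=3$. You instead work in a Jordan basis for $M=H^{-1}N$ and analyse the Gram matrix directly via the skew-adjointness recursion $b_{i-1,j}=-b_{i,j-1}$, obtaining the explicit anti-triangular shape and computing the signature as $((n+1)/2,(n-1)/2)$; Lorentzian then pins $n=3$. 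Your approach is more combinatorial and has the advantage of revealing at once which signatures are compatible with a single nilpotent block in every dimension, whereas the paper's is more geometric and reaches $n=3$ via a rank count. One small comment: your final sentence, appealing to ``a choice of $A$ adapting the Jordan flag'', is a little quick; to finish cleanly you should note that within the two-parameter family of Gram matrices you have found (governed by $d_4\neq0$ and $d_6$) one may rescale the Jordan chain and shift the top vector to normalise $d_4$ and kill $d_6$, after which the match with the displayed targets is immediate. The paper does the analogous last step by a diagonal rescaling.
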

\begin{proof} Notice that
$$H\mapsto A^tHA\quad\mbox{and}\quad N\mapsto A^tNA\quad\implies
H^{-1}N\mapsto A^{-1}H^{-1}NA.$$
Therefore, without loss of generality, we may suppose that $H^{-1}N$ is in
Jordan canonical form. Our hypothesis says that there is just one Jordan block
with the eigenvalue $\lambda$ down the diagonal. But
$$\tr(H^{-1}N)=\tr(N^t(H^t)^{-1})=
-\tr(NH^{-1})=-\tr(H^{-1}N)$$
so $\lambda=0$. In particular, the eigenspace is the same as the kernel of~$N$.
Suppose $u$ is a non-zero vector in this kernel and consider
$$u^\perp\equiv\{v\mbox{ s.t. }u^tHv=0\}.$$
Since 
$$u^tHH^{-1}Nv=u^tNv=v^tN^tu=-v^tNu=0,$$
it follows that $H^{-1}N$ preserves $u^\perp$. The hypothesis that $H^{-1}N$
has only one eigenvector up to scale now forces $u\in u^\perp$. In other words
$u$ is null, i.e.~$u^tHu=0$. It is well-known that ${\mathrm{O}}(n-1,1)$ acts
transitively on the null vectors. Therefore we may suppose that
$$H=\mbox{\small$\left[\begin{array}{ccc}0&0&1\\
0&{\mathrm{Id}}&0\\ 1&0&0
\end{array}\right]$}\quad\mbox{and}\quad
u=\mbox{\small$\left[\begin{array}{c}1\\ 0\\ 0
\end{array}\right]$}.$$
It follows that 
$$N=\mbox{\small$\left[\begin{array}{ccc}0&0&0\\ 
0&M&-r\\ 0&r^t&0\end{array}\right]$}$$
where $M$ is a skew $(n-2)\times(n-2)$ matrix. Therefore, 
$$H^{-1}N=\mbox{\small$\left[\begin{array}{ccc}0&r^t&0\\ 
0&M&-r\\ 0&0&0\end{array}\right]$}\quad\mbox{and}\quad
(H^{-1}N)^2=\mbox{\small$\left[\begin{array}{ccc}0&r^tM&-r^tr\\ 
0&M^2&-Mr\\ 0&0&0\end{array}\right]$}$$
{From} the Jordan canonical form of $H^{-1}N$ we see that, not only does its
trace vanish, but also the traces of its higher powers. In particular,  
$$0=\tr((H^{-1}N)^2) = \tr(M^2)$$
and since $M$ is skew it follows that $M=0$ and hence that $\rk N=2$. Since
the kernel of $N$ is supposedly $1$-dimensional, $n=3$ is forced and
$$N=\mbox{\small$\left[\begin{array}{ccc}0&0&0\\ 
0&0&-r\\ 0&r&0\end{array}\right]$}.$$
Finally, if we take
$$A=\mbox{\small$\left[\begin{array}{ccc}\mu^{-1}&0&0\\ 
0&1&0\\ 0&0&\mu\end{array}\right]$},$$
then 
$$A^tHA=\mbox{\small$\left[\begin{array}{ccc}0&0&1\\ 
0&1&0\\ 1&0&0
\end{array}\right]$}\quad\mbox{and}\quad
A^tNA=\mbox{\small$\left[\begin{array}{ccc}0&0&0\\ 
0&0&-\mu r\\ 0&\mu r&0
\end{array}\right]$}$$
and so we can insist that $\mu r=-2$ if we so wish.  
\end{proof}
\begin{lemma}\label{ontheaxes}
Suppose $H$ is a real symmetric $n\times n$ matrix of Lorentzian signature and
$N$ is a real skew $n\times n$ matrix. Then the eigenvalues of $H^{-1}N$ lie on
the real or imaginary axes.
\end{lemma}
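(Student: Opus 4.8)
The plan is to recognise $H^{-1}N$ as an operator that is skew-adjoint with respect to the indefinite form carried by $H$, and then to exploit the fact that Lorentzian signature permits only one-dimensional isotropic subspaces. Writing $A=H^{-1}N$, the hypotheses $N^t=-N$ and $H^t=H$ give $A^tH=-HA$, so $A$ is skew-adjoint for the symmetric bilinear form $\langle u,v\rangle=u^tHv$. First I would complexify: extend this to the Hermitian form $\langle u,v\rangle=\bar u^tHv$ on $\mathbb{C}^n$, which has the same Lorentzian signature $(n-1,1)$, and note that the same identity now reads $A^*=-A$, where $A^*$ denotes the Hermitian adjoint and we use that $A$ is real so that $\bar A=A$.

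Next I would extract eigenvalue constraints. If $Av=\lambda v$ with $v\neq 0$, then skew-adjointness gives $\langle Av,v\rangle=-\langle v,Av\rangle$, which unwinds to $(\lambda+\bar\lambda)\langle v,v\rangle=0$; hence either $\mathrm{Re}\,\lambda=0$ or $v$ is null. Because $A$ is real, $\bar v$ is an eigenvector for $\bar\lambda$, and the analogous computation applied to the pair $(v,\bar v)$ yields $2\bar\lambda\langle v,\bar v\rangle=0$, so $\langle v,\bar v\rangle=0$ whenever $\lambda\neq 0$.

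The heart of the matter is the remaining case, in which $\lambda$ has both a nonzero real part and a nonzero imaginary part. Then $\lambda\neq\bar\lambda$, so $v$ and $\bar v$ are linearly independent; moreover $\langle v,v\rangle=\langle\bar v,\bar v\rangle=\langle v,\bar v\rangle=0$ by the identities just derived. Thus $v$ and $\bar v$ span a two-dimensional totally isotropic subspace of $\mathbb{C}^n$ for the Hermitian form. But a Hermitian form of signature $(n-1,1)$ has Witt index $\min(n-1,1)=1$, so it admits no totally isotropic plane. This contradiction forces $\mathrm{Re}\,\lambda=0$ or $\mathrm{Im}\,\lambda=0$, which is exactly the assertion. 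I expect the only delicate points to be the bookkeeping of conjugate-linearity in the Hermitian form, so that the signs in $(\lambda+\bar\lambda)\langle v,v\rangle=0$ and $2\bar\lambda\langle v,\bar v\rangle=0$ come out correctly, and the observation that the single negative direction of Lorentzian signature is precisely what obstructs a two-dimensional isotropic subspace; everything else is formal.
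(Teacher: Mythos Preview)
Your argument is correct. Both your proof and the paper's hinge on the same two facts: that $A=H^{-1}N$ is skew-adjoint for the form determined by $H$, and that a Lorentzian form cannot support a two-dimensional totally isotropic subspace. The difference is one of packaging. The paper works entirely over $\mathbb{R}$: writing the eigenvector as $u+iv$ and the eigenvalue as $x+iy$, it uses $u^tNu=v^tNv=u^tNv+v^tNu=0$ to derive $u^tHu=v^tHv=u^tHv=0$, and then observes directly that two nonzero $H$-orthogonal null vectors in Lorentzian signature must be proportional, forcing $y=0$. You instead complexify to a Hermitian form of signature $(n-1,1)$, read off $\langle v,v\rangle=\langle\bar v,\bar v\rangle=\langle v,\bar v\rangle=0$ from skew-adjointness, and invoke the Witt index to rule out the resulting isotropic plane. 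Your version is a little more conceptual and makes clear that nothing beyond the signature is used; the paper's version is more self-contained, since it does not appeal to the Witt index as a black box but effectively reproves the relevant special case in one line.
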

\begin{proof}Suppose that $x+iy$ is an eigenvalue, i.e.\
\begin{equation}\label{eigenvalue}
H^{-1}N(u+iv)=(x+iy)(u+iv)\quad\mbox{for some }u+iv\not=0.
\end{equation}
Writing out the real and imaginary parts separately gives
\begin{equation}\label{realandimaginary}
H^{-1}Nu=x u-y v\quad\mbox{and}
\quad H^{-1}Nv=y u+x v.\end{equation}
We argue by contradiction, supposing that neither $x$ nor $y$
vanishes. In this case we see from (\ref{realandimaginary}) that neither $u$
nor $v$ vanishes. Because $N$ is skew, we see from (\ref{realandimaginary})
that 
$$\begin{array}l
0=u^tNu=u^tHH^{-1}Nu=x u^tHu-y u^tHv,\\
0=v^tNv=v^tHH^{-1}Nv=y v^tHu+x v^tHv.
\end{array}$$
Therefore
$$x u^tHu=y u^tHv=y v^tHu=-x v^tHv.$$
Since we are supposing that $x\not=0$, we conclude that
$u^tHu=-v^tHv$. Again using~(\ref{realandimaginary}), we now find that 
$$0=u^tNv+v^tNu=u^tH^{-1}HNv+v^tH^{-1}HNu=2y u^tHu+2x u^tHv,$$
whence
$$0=x u^tHu-y u^tHv\quad\mbox{and}\quad
0=y u^tHu+x u^tHv.$$
Therefore $(x^2+y^2)u^tHu=0$ and so $u^tHu=0$. Bearing in mind our assumption
that $y\not=0$, we have found two real vectors $u$ and $v$ with
$$u\not =0,\quad v\not=0,\quad u^tHu=0,\quad v^tHv=0,\quad u^tHv=0.$$
For $H$ of Lorentzian signature this forces $v=tu$ for some $t\in{\mathbb{R}}$.
Substituting back into (\ref{eigenvalue}) and taking out a factor of $(1+it)$
gives 
$$H^{-1}Nu=(x+iy)u$$
and hence that $y=0$, our required contradiction.
\end{proof}
\begin{lemma}\label{realnonzero}
Suppose $H$ is a real symmetric $n\times n$ matrix of Lorentzian signature and
$N$ is a real skew $n\times n$ matrix. Suppose that $H^{-1}N$ has a non-zero
real eigenvalue~$\lambda$. Then $-\lambda$ is also an eigenvalue and we can
find an invertible real $n\times n$ matrix $A$ such that (in block form)
$$A^tHA=\mbox{\small$\left[\begin{array}{ccc}0&0&1\\
0&{\mathrm{Id}}&0\\ 1&0&0
\end{array}\right]$}\quad\mbox{and}\quad
A^tNA=\mbox{\small$\left[\begin{array}{ccc}0&0&\lambda\\ 
0&M&0\\ -\lambda&0&0
\end{array}\right]$},$$
where $M$ is a skew $(n-2)\times(n-2)$ matrix.
\end{lemma}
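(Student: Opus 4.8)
The plan is to treat $B\equiv H^{-1}N$ as an operator that is skew-adjoint for the nondegenerate Lorentzian form $\langle x,y\rangle\equiv x^{t}Hy$, and then to split off the hyperbolic plane spanned by a pair of real null eigenvectors. Skew-adjointness is immediate: since $H$ is symmetric and $N$ is skew, $B^{t}H=(H^{-1}N)^{t}H=-NH^{-1}H=-N$, so that $\langle Bx,y\rangle=-x^{t}Ny=-\langle x,By\rangle$. The same identity, rewritten as $HBH^{-1}=-B^{t}$, shows $B$ is similar to $-B^{t}$ and hence, since every matrix is similar to its transpose, to $-B$; thus $\chi_{B}(t)=\pm\chi_{B}(-t)$ and the spectrum is symmetric about the origin, which already yields that $-\lambda$ is an eigenvalue. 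This is the trace-type argument already exploited in Lemma~\ref{onlyone}.

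Next I would produce the two eigenvectors concretely. Let $u$ be a real eigenvector for $\lambda$, so $Nu=\lambda Hu$; since $N$ is skew, $0=u^{t}Nu=\lambda\,u^{t}Hu$, whence $u$ is $H$-null. Following Lemma~\ref{onlyone}, I would use transitivity of $\mathrm{O}(n-1,1)$ on null vectors to place $H$ in its anti-diagonal normal form and $u=e_{1}$. Reading off $Nu=\lambda He_{1}$ then fixes the first row and column of $N$, and a short computation displays $B=H^{-1}N$ as block upper triangular with diagonal blocks $\lambda$, a skew $(n-2)\times(n-2)$ matrix $M_{0}$, and $-\lambda$. Because $M_{0}$ is skew its eigenvalues are imaginary, so $M_{0}+\lambda\,\mathrm{Id}$ is invertible and one solves explicitly for a real $(-\lambda)$-eigenvector $w$; by construction its $e_{n}$-component is nonzero, so $\langle u,w\rangle\neq0$. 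Verifying that these two null eigenvectors span a nondegenerate plane is the one point that needs care, and it is exactly this invertibility of $M_{0}+\lambda\,\mathrm{Id}$ (equivalently, the fact that the form restricts to a perfect pairing between the $\lambda$- and $(-\lambda)$-eigenvectors) that secures it. This is the main obstacle.

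With $u,w$ null and $\langle u,w\rangle\neq0$ (rescale to $1$), the plane $P=\mathrm{span}(u,w)$ is nondegenerate of signature $(1,1)$; since $H$ is Lorentzian, $P^{\perp}$ is positive definite of dimension $n-2$. As $B$ preserves $P$ (its two eigenvectors) and is skew-adjoint, it preserves $P^{\perp}$; restricted there it is skew-adjoint for a positive-definite form, hence an honest skew matrix $M$ in any $H$-orthonormal basis of $P^{\perp}$. Ordering the basis as $w$, then an orthonormal basis of $P^{\perp}$, then $u$, one finds $B=\operatorname{diag}(-\lambda,M,\lambda)$ in block form, and the corresponding change-of-basis matrix $A$ simultaneously brings $H$ and $N=HB$ to precisely the two matrices displayed in the statement. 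Everything beyond the nondegeneracy of $P$ is the routine block bookkeeping already rehearsed in Lemma~\ref{onlyone}.
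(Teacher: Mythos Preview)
Your proof is correct and follows essentially the same route as the paper's: normalise $H$, place the null $\lambda$-eigenvector $u$ as a coordinate vector, read off the block-triangular shape of $H^{-1}N$, locate a $(-\lambda)$-eigenvector, and then use the resulting hyperbolic plane and its orthogonal complement to produce the desired block form. The one substantive difference is in how the pairing $\langle u,w\rangle\neq 0$ is secured. You construct $w$ explicitly by inverting $M_{0}+\lambda\,\mathrm{Id}$ (legitimate, since $M_{0}$ is skew and $\lambda$ is real and nonzero), whereas the paper simply takes any $(-\lambda)$-eigenvector $v$, notes it is null, and invokes the Lorentzian signature: two linearly independent null vectors cannot be $H$-orthogonal, for then their span would be a two-dimensional totally isotropic subspace, impossible in signature $(n-1,1)$. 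The paper's argument is quicker and uses the hypothesis on $H$ directly; yours is more constructive and does not rely on the signature at that step, which would matter if one wanted an analogue for other signatures.
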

\begin{proof} Certainly, we may arrange that $A^tHA$ is of the required
form and we shall suppose, without loss of generality, that $H$ is already
normalised like this. Write $H^{-1}Nu=\lambda u$ for $u\not=0$. Then 
$$0=u^tNu=u^tHH^{-1}Nu=\lambda u^tHu$$
so $u^tHu=0$. Therefore, by a suitable $A$ we may arrange  
$$u=\mbox{\small$\left[\begin{array}c0\\ 0\\ 1
\end{array}\right]$},\enskip\mbox{ and this forces }\enskip
H^{-1}N=\mbox{\small$\left[\begin{array}{ccc}\cdot&\cdot&0\\
\cdot&\cdot&0\\ \cdot&\cdot&\lambda
\end{array}\right]$}.$$
Bearing in the mind that $N$ is skew, this implies 
$$N=\mbox{\small$\left[\begin{array}{ccc}\cdot&\cdot&\lambda\\
\cdot&\cdot&0\\ -\lambda&0&0
\end{array}\right]$},\enskip\mbox{ and then }
H^{-1}N=\mbox{\small$\left[\begin{array}{ccc}-\lambda&0&0\\
\cdot&\cdot&0\\ \cdot&\cdot&\lambda
\end{array}\right]$}.$$
It follows that $-\lambda$ is an eigenvalue, say $H^{-1}Nv=-\lambda v$ for some
$v\not=0$ and, reasoning as above, $v^tHv=0$. Since $u$ and $v$ are not
proportional, we may scale them so that $u^tHv=1$. Finally, if we arrange 
that  
$$v=\mbox{\small$\left[\begin{array}c1\\ 0\\ 0
\end{array}\right]$}
,\enskip\mbox{ then}\enskip
H^{-1}N=\mbox{\small$\left[\begin{array}{ccc}-\lambda&0&0\\
0&\cdot&0\\ 0&\cdot&\lambda
\end{array}\right]$}.$$
This immediately implies that $N$ has the desired form.
\end{proof}

With these Lemmata on hand we are now in a position to establish a general
canonical form. As already mentioned, we shall prefer (\ref{prefer}) for
$A^tHA$. When $n=2$ there is almost nothing more to do:--
$$A^tHA=\mbox{\small$\left[\begin{array}{ccc}0&1\\ 1&0
\end{array}\right]$}\quad\mbox{and}\quad
A^tNA=\mbox{\small$\left[\begin{array}{ccc}0&\lambda\\ -\lambda&0
\end{array}\right]$}$$
simply because $N$ is skew. It remains to observe that we can change the sign
of $\lambda$ using
$$A=\mbox{\small$\left[\begin{array}{ccc}0&1\\ 1&0
\end{array}\right]$}$$
but that $\lambda^2$ is well-defined because the characteristic polynomial  
$$\det(H^{-1}N-t\,{\mathrm{id}})=t^2-\lambda^2$$
is invariant. The first interesting case is $n=3$. 
\begin{thm}
Suppose $H$ is a real symmetric $3\times 3$ matrix of Lorentzian
signature and $N$ is a real skew $3\times 3$ matrix. Then we can find an
invertible real $3\times 3$ matrix $A$ such that
$$A^tHA=\mbox{\small$\left[\begin{array}{ccc}0&0&1\\
0&1&0\\ 1&0&0
\end{array}\right]$}$$
and $A^tNA$ is
\begin{equation}\label{threecases}
\mbox{\small$\left[\begin{array}{ccc}0&0&\lambda\\ 
0&0&0\\ -\lambda&0&0
\end{array}\right]$}\quad\mbox{or}\quad
\frac{1}{\sqrt 2}\mbox{\small$\left[\begin{array}{ccc}0&\lambda&0\\ 
-\lambda&0&-\lambda\\ 0&\lambda&0
\end{array}\right]$}\quad\mbox{or}\quad
\mbox{\small$\left[\begin{array}{ccc}0&0&0\\ 
0&0&2\\ 0&-2&0
\end{array}\right]$}.\end{equation}
Furthermore, these three possible canonical forms are distinct apart from
changing the sign of $\lambda$ in the first two cases and the coincidence of
the first two cases when $\lambda=0$.
\end{thm}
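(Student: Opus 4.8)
The plan is to reduce everything to the single matrix $H^{-1}N$, whose conjugacy class under $A\mapsto A^{-1}(H^{-1}N)A$ is the only invariant in play, and to squeeze out the two facts we already know about it. First, $\tr(H^{-1}N)=0$, exactly as in the proof of Lemma~\ref{onlyone}. Secondly, since $N$ is skew of odd order, $\det N=0$ and hence $\det(H^{-1}N)=0$, so $0$ is always an eigenvalue. The characteristic polynomial of $H^{-1}N$ is therefore forced into the shape $t^3+c_2t=t(t^2+c_2)$, with $c_2=-\tfrac12\tr((H^{-1}N)^2)$, and the two remaining eigenvalues are $\pm\sqrt{-c_2}$. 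In particular they are automatically real or purely imaginary, which recovers Lemma~\ref{ontheaxes} in this dimension for free, and the sign of the single invariant $c_2$ governs which of the three forms occurs.

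When $c_2<0$ there is a non-zero real eigenvalue $\lambda$, and I would simply invoke Lemma~\ref{realnonzero}: it puts $A^tHA$ into the anti-diagonal form together with $A^tNA$ in its stated block shape, in which the skew block $M$ is now $1\times1$ and hence zero. This is precisely the first matrix of~(\ref{threecases}). When $c_2=0$ the matrix $H^{-1}N$ is nilpotent; here the decisive observation is that a skew $3\times3$ matrix has \emph{even} rank, so either $N=0$ (the first form with $\lambda=0$) or $\rk N=2$, which forces $H^{-1}N$ to be a single $3\times3$ Jordan block with one-dimensional kernel. That is exactly the hypothesis of Lemma~\ref{onlyone}, which delivers the third matrix of~(\ref{threecases}).

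The case $c_2>0$, giving a conjugate pair of purely imaginary eigenvalues $\pm i\lambda$, is the one not covered by the existing lemmata, and this is where I expect the real work to lie. I would take a complex eigenvector $a+ib$ with $H^{-1}N(a+ib)=i\lambda(a+ib)$ and separate real and imaginary parts, obtaining $Na=-\lambda Hb$ and $Nb=\lambda Ha$. Transvecting these against $a$, $b$ and against a generator $u$ of $\ker N$, and repeatedly using skewness of $N$, one finds that $a,b,u$ are mutually $H$-orthogonal with $a^tHa=b^tHb$. Lorentzian signature then forces this common value to be strictly positive (a plane on which $H$ is a positive multiple of the identity cannot be negative-definite or null) and hence $u^tHu<0$. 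After rescaling to an $H$-orthonormal frame $\operatorname{diag}(1,1,-1)$ and passing to the hyperbolic basis $\tfrac1{\sqrt2}(e_2\pm e_3)$ together with $e_1$, which converts $\operatorname{diag}(1,1,-1)$ into the anti-diagonal form, a direct computation lands $A^tNA$ on the middle matrix of~(\ref{threecases}), up to the sign of $\lambda$.

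Finally, distinctness is read off from the same invariant: $\tr((H^{-1}N)^2)$ is preserved by every admissible $A$, and it is positive, negative, or zero according as the non-zero eigenvalues of $H^{-1}N$ are real, imaginary, or absent, which separates the three families. Within each of the first two families only $\lambda^2$ is pinned down, accounting for the freedom to flip the sign of $\lambda$, while the shared value $N=0$ at $\lambda=0$ accounts for the stated coincidence; the third, rigidly scaled, nilpotent form is always non-zero and so cannot be confused with it.
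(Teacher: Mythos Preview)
Your proof is correct and follows essentially the same route as the paper: you invoke Lemma~\ref{realnonzero} for a non-zero real eigenvalue, Lemma~\ref{onlyone} for the single-Jordan-block nilpotent case, and carry out the same direct computation with a complex eigenvector in the purely imaginary case, reading off distinctness from the characteristic polynomial. The only real difference is organisational: by observing at the outset that $\det N=0$ (odd skew) and $\tr(H^{-1}N)=0$ force the characteristic polynomial into the shape $t(t^2+c_2)$, you bypass Lemma~\ref{ontheaxes} entirely in this dimension and let the sign of $c_2$ drive the case split, which is a tidy streamlining of the paper's ``one eigenvector versus several, then real versus imaginary'' dichotomy.
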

\begin{proof}
If $H^{-1}N$ has only one eigenvector up to scale, then Lemma~\ref{onlyone}
applies and we obtain the third case of~(\ref{threecases}). Else,
Lemma~\ref{ontheaxes} implies that either all eigenvalues are real or they are
$i\lambda,-i\lambda,0$ for some $\lambda\not=0$.

Firstly, let us suppose they are all real. They could still all be zero in
which case the kernel $N$ is at least $2$-dimensional. But the rank of a skew
matrix is always even so then $N=0$. Otherwise, if $\lambda\not=0$ is a
real eigenvalue, then Lemma~\ref{realnonzero} gives the first
of~(\ref{threecases}). 

When $i\lambda$ is an eigenvalue, then 
$$H^{-1}N(u+iv)=i\lambda(u+iv)$$
implies that 
$$H^{-1}Nu=-\lambda v\quad\mbox{and}\quad H^{-1}Nv=\lambda u.$$
It follows that
$$\begin{array}l0=u^tNu=u^tHH^{-1}Nu=-\lambda u^tHv\\
0=u^tNv+v^tNu=u^tHH^{-1}Nv+v^tHH^{-1}Nu=\lambda(u^tHu-v^tHv)
\end{array}$$
and so if $\lambda\not=0$, we conclude that 
$$u^tHu=v^tHv\quad\mbox{and}\quad u^tHv=0.$$
In this case, by a suitable $A$ we may arrange 
$$u=\frac{1}{\sqrt 2}
\mbox{\small$\left[\begin{array}c1\\ 0\\ 1\end{array}\right]$}
\quad\mbox{and}\quad
v=\mbox{\small$\left[\begin{array}c0\\ 1\\ 0\end{array}\right]$},$$
from which the second of (\ref{threecases}) is forced. Interchanging $u$ and
$v$ changes the sign of~$\lambda$. Otherwise, the distinctions between these
canonical forms is clear from the Jordan canonical form of $H^{-1}N$ and its
characteristic polynomial.
\end{proof}
It is easy to generalise these canonical forms to $n\times n$ matrices. The
only one we shall need is the $5\times 5$ case and we state it here. 
\begin{thm}\label{fivebyfivecase}
Suppose $H$ is a real symmetric $5\times 5$ matrix of Lorentzian
signature and $N$ is a real skew $5\times 5$ matrix. Then we can find an
invertible real $5\times 5$ matrix $A$ such that
$$A^tHA=\mbox{\small$\left[\begin{array}{ccccc}0&0&0&0&1\\
0&1&0&0&0\\ 0&0&1&0&0\\ 0&0&0&1&0\\ 1&0&0&0&0
\end{array}\right]$}$$
and $A^tNA$ is
\begin{equation}\label{firsttype}
\mbox{\small$\left[\begin{array}{ccccc}0&0&0&0&\lambda\\ 
0&0&0&0&0\\ 0&0&0&\mu&0\\ 0&0&-\mu&0&0\\ -\lambda&0&0&0&0
\end{array}\right]$}\quad
\begin{array}l\mbox{well-defined up to}\\
(\lambda,\mu)\mapsto(-\lambda,\mu)\mbox{ or }(\lambda,-\mu)\\
\mbox{or }(-\lambda,-\mu),\end{array}\end{equation}
or
$$\mbox{\small$\left[\begin{array}{ccccc}0&\lambda/\sqrt 2&0&0&0\\ 
-\lambda/\sqrt 2&0&0&0&-\lambda/\sqrt 2\\ 
0&0&0&\mu&0\\ 0&0&-\mu&0&0\\ 0&\lambda/\sqrt 2&0&0&0
\end{array}\right]$}\quad
\begin{array}l\mbox{well-defined up to}\\
(\lambda,\mu)\mapsto(-\lambda,\mu)\mbox{ or }(\lambda,-\mu)\\
\mbox{or }(-\lambda,-\mu)\mbox{ or }(\mu,\lambda)\\ 
\mbox{or }(-\mu,\lambda)\mbox{ or }(\mu,-\lambda)\\
\mbox{or }(-\mu,-\lambda),
\end{array}$$
or
\begin{equation}\label{thirdtype}
\mbox{\small$\left[\begin{array}{ccccc}0&0&0&0&0\\ 
0&0&0&0&2\\ 0&0&0&\mu&0\\ 0&0&-\mu&0&0\\ 0&-2&0&0&0
\end{array}\right]$}
\quad\mbox{well-defined up to }\mu\mapsto-\mu.\end{equation}
Furthermore, these canonical forms are distinct except for the evident
coincidence of the first two cases when~$\lambda=0$.
\end{thm}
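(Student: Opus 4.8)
The plan is to reduce everything, exactly as in the $3\times 3$ theorem, to the conjugacy class of $K \equiv H^{-1}N$ under $A\mapsto A^{-1}KA$, which is what congruence by $A$ induces since $(A^tHA)^{-1}(A^tNA)=A^{-1}H^{-1}NA$. Because $K$ is skew-adjoint for $H$ (indeed $HKH^{-1}=NH^{-1}=-(H^{-1}N)^t=-K^t$), it lies in $\mathfrak{o}(4,1)$ and its spectrum is symmetric under $\lambda\mapsto-\lambda$. By Lemma~\ref{ontheaxes} every eigenvalue is real or purely imaginary, so the five eigenvalues are forced to be $0,\pm\alpha,\pm\beta$ with each of $\alpha,\beta$ real, imaginary, or zero. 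The single timelike direction is the organising constraint: a nonzero real (boost) eigenvalue needs a Lorentzian $(1,1)$-plane, and two such orthogonal planes would force signature $(2,2)\not\subseteq(4,1)$, so \emph{at most one} eigenvalue pair can be real and nonzero.

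First I would dispose of the case in which $K$ has a nonzero real eigenvalue $\lambda$. Lemma~\ref{realnonzero}, applied with $n=5$, already puts $H$ into the preferred form~(\ref{prefer}) and presents $N$ with a $(1,1)$ null block carrying $\pm\lambda$ and a skew $3\times 3$ matrix $M$ on a middle space whose $H$-form is the positive-definite $\mathrm{Id}_3$. Every such $M$ is conjugate by an element of $\mathrm{O}(3)$ — which fixes $H$ and leaves the null block untouched — to the standard single-angle skew form, yielding the first canonical form~(\ref{firsttype}). The residual freedom is exactly the sign of $\lambda$ (interchanging the two null vectors) and the sign of $\mu$, matching the stated ambiguity, and the degenerations $\lambda=0$ or $\mu=0$ are subsumed here.

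Next, when all eigenvalues are imaginary or zero and $K$ is semisimple, I would locate the two real invariant $2$-planes $\mathrm{span}(u,v)$ attached to $i\alpha$ and $i\beta$. Repeating the computation of the imaginary case of the preceding $3\times 3$ theorem gives $u^tHu=v^tHv$ and $u^tHv=0$ on each, so each plane is $H$-definite, hence \emph{spacelike}; distinct eigenspaces are $H$-orthogonal, so the two planes exhaust the four spacelike directions and fix the timelike line (eigenvalue $0$). This exhibits $K$ as two commuting spacelike rotations, and the remaining work is to re-coordinatise into the \emph{hyperbolic} $H$: pairing the fixed timelike vector with one spacelike direction into a null pair twists that rotation into the $\mathfrak{o}(2,1)$-elliptic $\{1,2,5\}$-block, producing the second canonical form. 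Since both angles then play symmetric roles, the residual symmetry acquires the interchange $(\lambda,\mu)\mapsto(\mu,\lambda)$ on top of the sign changes, precisely as listed.

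Finally there is the non-semisimple case: a nontrivial Jordan block forces a null eigenvector, and by the $\pm$-symmetry of the spectrum the offending eigenvalue must be $0$. After splitting off a complementary spacelike rotation $\mu$, the residual $(2,1)$-piece carries a single Jordan block with a lone eigenvector, which is exactly the hypothesis of Lemma~\ref{onlyone}; its normal form is the nilpotent $\{1,2,5\}$-block of~(\ref{thirdtype}), rigid up to $\mu\mapsto-\mu$. I expect the main obstacle to be this interplay between the Jordan structure and the insistence on the hyperbolic normalisation of $H$: in both the second and the third cases the natural orthogonal splitting first produces a \emph{diagonal} $H$, and one must track the null re-coordinatisation carefully — together with the resulting enlargement (case two) or rigidity (case three) of the symmetry group — in order to land on the stated forms rather than on a diagonal one. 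Distinctness of the three forms, apart from the noted coincidence at $\lambda=0$, then follows by comparing the characteristic polynomial of $K$ with the $H$-signatures of its invariant real $2$-planes.
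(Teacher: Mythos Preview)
The paper does not actually give a proof of this theorem: it simply asserts that the $3\times 3$ argument ``is easy to generalise'' and then states the $5\times 5$ result. Your sketch supplies precisely that generalisation, using Lemmata~\ref{onlyone}, \ref{ontheaxes}, and~\ref{realnonzero} in the intended way, and is essentially correct.

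One step deserves more care. In the non-semisimple case you write that ``by the $\pm$-symmetry of the spectrum the offending eigenvalue must be $0$.'' Symmetry alone does not give this; a Jordan block at $i\alpha$ together with its conjugate at $-i\alpha$ is perfectly compatible with $\pm$-symmetry. What actually rules it out is the signature: if $u=p+iq$ is a complex eigenvector for $i\alpha\neq 0$, the calculation you already cite gives $p^tHp=q^tHq$ and $p^tHq=0$, and in Lorentzian signature this common value cannot vanish (else $p,q$ would be orthogonal null vectors, hence proportional). Thus $\mathrm{span}(p,q)$ is a $K$-invariant spacelike $2$-plane, so its $H$-orthogonal complement is also $K$-invariant and $K$ splits, contradicting a size-$2$ block at $i\alpha$. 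Combined with the observation (implicit in Lemma~\ref{realnonzero}) that a nonzero real eigenvalue already forces $K$ to be block-diagonal with a skew block on a positive-definite space, hence semisimple, you get that any Jordan block of size $\geq 2$ sits at eigenvalue~$0$.

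A second small point: when you ``split off a complementary spacelike rotation $\mu$'' you should say why this is still possible when $K$ is purely nilpotent ($\mu=0$). Here the even-rank constraint on the skew matrix $N$ does the work: a Jordan type $2+1$ on the $(2,1)$-piece would give $\mathrm{rank}\,N=1$, which is impossible, so the only nonzero nilpotent in $\mathfrak{o}(2,1)$ is the single $3$-block of Lemma~\ref{onlyone}; and in the full $5$-dimensional nilpotent case the same parity argument forces the partition $3+1+1$, with the two $1$-blocks spanning the spacelike $2$-plane you need.
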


\end{document}